\setlist[itemize]{leftmargin=2em}
\setlist[enumerate]{leftmargin=2em}
\definecolor{darkblue}{rgb}{0.0,0,0.7} 
\definecolor{darkred}{rgb}{0.7,0,0} 
\definecolor{darkgreen}{rgb}{0, .6, 0} 
\newcommand{\defncolor}{\color{darkred}}
\newcommand{\defn}[1]{{\defncolor\emph{#1}}} 
\newtheorem{theorem}{Theorem}[section]
\newtheorem{prop}[theorem]{Proposition}
\newtheorem{cor}[theorem]{Corollary}
\newtheorem{lemma}[theorem]{Lemma}
\theoremstyle{definition}
\newtheorem{definition}[theorem]{Definition}
\newtheorem{example}[theorem]{Example}
\newtheorem{remark}[theorem]{Remark}
\numberwithin{equation}{section}
\newcommand{\idiot}[1]{\vspace{5 mm}\par \noindent
\marginpar{\textsc{Note}}
\framebox{\begin{minipage}[c]{0.95 \textwidth}
#1 \end{minipage}}\vspace{5 mm}\par}
\renewcommand{\idiot}[1]{}
\def\la{{\lambda}}
\def\o{\overline}
\newcommand{\Ph}{\widehat{\mathsf{P}}}
\newcommand{\Dh}{\widehat{\mathsf{D}}}
\newcommand{\Pa}{\mathsf{P}}
\newcommand{\piok}{\pi^{\otimes k}}
\newcommand{\piokk}{\piok_{k+1}}
\newcommand{\stirling}{\genfrac{\{}{\}}{0pt}{}}
\newcommand{\mike}[1]{\todo[size=\tiny,color=orange!30]{#1 \\ \hfill --- Mike}}
\newcommand{\rosa}[1]{\todo[size=\tiny,color=Cyan]{#1 \\ \hfill --- Rosa}}
\tikzstyle{partition-diagram}=[scale=0.4, thick, baseline={(0,-1ex/2)}, 
\tikzstyle{idempotent}=[background rectangle/.style={ultra thick, draw=ForestGreen, rounded corners}]
\tikzstyle{vertex} = [shape = circle, minimum size = 3pt, inner sep = 1pt]
\tikzstyle{representative}=[background rectangle/.style={ultra thick, draw=RoyalBlue, rounded corners}]
\tikzstyle{partition-diagram-small}=[scale=0.15, thin, baseline={(0,-1ex/2)}]
\tikzstyle{vertex-small} = [shape = circle, minimum size = 2pt, inner sep = 1pt]
\def\unprotectedboldentry#1{\textcolor{Red}{\large{#1}}}
\def\boldentry{\protect\unprotectedboldentry}
\newcommand{\tikztableauinternal}[1]{
    \def\newtableau{#1}
    \coordinate (x) at (-0.5,0.5);
    \coordinate (y) at (-0.5,0.5);
    \foreach \row in \newtableau {
        \foreach \entry in \row {
            \ifthenelse{\equal{\entry}{X} \OR \equal{\entry}{None}}
               {
                \node (y) at ($(y) + (1,0)$) {};
                \fill[color=gray!30] ($(y)-(0.5,0.5)$) rectangle +(1,1);
                \draw[color=gray, dotted] ($(y)-(0.5,0.5)$) rectangle +(1,1);
               }
               {
                \ifthenelse{\equal{\entry}{\boldentry X}}
                   {
                    \node (y) at ($(y) + (1,0)$) {};
                    \fill[color=gray] ($(y)-(0.5,0.5)$) rectangle +(1,1);
                    \draw ($(y)-(0.5,0.5)$) rectangle +(1,1);
                   }
                   {
                    \node (y) at ($(y) + (1,0)$) {\entry};
                    \draw ($(y)-(0.5,0.5)$) rectangle +(1,1);
                   }
               }
            }
        \coordinate (x) at ($(x)-(0,1)$);
        \coordinate (y) at (x);
        }
}
\newdimen\squaresize \squaresize=10pt
\newdimen\thickness \thickness=0.4pt
\def\square#1{\hbox{\vrule width \thickness
     \vbox to \squaresize{\hrule height \thickness\vss
        \hbox to \squaresize{\hss#1\hss}
     \vss\hrule height\thickness}
\unskip\vrule width \thickness}
\kern-\thickness}
\def\vsquare#1{\vbox{\square{$#1$}}\kern-\thickness}
\def\young#1{
\vbox{\smallskip\offinterlineskip
\halign{&\vsquare{##}\cr #1}}}
\def\thisbox#1{\kern-.09ex\fbox{#1}}
\def\downbox#1{\lower1.200em\hbox{#1}}
\newdimen\Squaresize \Squaresize=20pt
\newdimen\Thickness \Thickness=0.4pt
\def\Square#1{\hbox{\vrule width \Thickness
     \vbox to \Squaresize{\hrule height \Thickness\vss
        \hbox to \Squaresize{\hss#1\hss}
     \vss\hrule height\Thickness}
\unskip\vrule width \Thickness}
\kern-\Thickness}
\def\Vsquare#1{\vbox{\Square{$#1$}}\kern-\Thickness}
\title[Quasi-partition algebras]{Representations of the quasi-partition algebras}
\author[Orellana]{Rosa Orellana}
\address[R. Orellana]{Mathematics Department, Dartmouth College, 
Hanover, NH 03755, U.S.A.}
\email{Rosa.C.Orellana@dartmouth.edu}
\urladdr{\href{https://math.dartmouth.edu/~orellana/}{https://math.dartmouth.edu/~orellana/}}
\author[Wallace]{Nancy Wallace}
\address[N. Wallace]{Department of Mathematics and Statistics,  York University, 4700 Keele Street, Toronto,
Ontario M3J 1P3, Canada}
\email{nwallace@yorku.ca}
\author[Zabrocki]{Mike Zabrocki}
\address[M. Zabrocki]{Department of Mathematics and Statistics,  York University, 4700 Keele Street, Toronto,
Ontario M3J 1P3, Canada}
\email{zabrocki@mathstat.yorku.ca}
\urladdr{\href{http://garsia.math.yorku.ca/~zabrocki/}{http://garsia.math.yorku.ca/~zabrocki/}}
\begin{document}
\maketitle

\centerline{\it Dedicated to the memory of Georgia Benkart}
\begin{abstract}
The quasi-partition algebras were introduced in \cite{DO} as centralizers of the symmetric group.
In this article, we give a more general definition of these algebras and give a construction
of their simple modules. In addition, we introduce two new algebras, we give linear bases
and show that for specializations of their parameters, these new algebras are isomorphic
to centralizer algebras.  We provide a generalized Bratteli diagram that illustrates how
the representation theory of the three algebras discussed in this paper are related.
Moreover, we give combinatorial formulas for the dimensions of the simple modules of these algebras.
\end{abstract}

\section{Introduction}

The partition algebra was defined independently in the work of
Martin and his coauthors \cite{marbook, mar1, mar3, MS} and Jones \cite{J3} in the early 1990's
as a natural extension of centralizer algebras such as the Brauer and Temperley-Lieb
algebras.  It is of interest for combinatorial representation theory because
it provides a dual approach to resolving some of the open combinatorial problems
related to the representation theory of the symmetric group.

The partition algebras are indexed by an integer $k\geq0$;
however in \cite{mar3} Martin introduced an intermediate algebra
that was recognized to lie between each integer indexed partition algebra.  Halverson and Ram named them 
\emph{half-partition algebras} (indexing them by $k+\frac{1}{2}$ for each
$k\geq0$).  They used the inclusions and  Jones' basic construction  to construct the matrix units, and the irreducible representations of the algebras.

The partition algebras $\mathsf{P}_k(n)$
are the centralizer algebras of the symmetric groups $S_n$
when they act on the vector space $V_n^{\otimes k}$
where $V_n \cong \mathbb{C}\mathrm{-Span}\{v_1,v_2,\ldots, v_n\}$.
One way of viewing this vector space is to notice \cite[Section 7.5.5]{CST}
that as an $S_n$ module, $V_n^{\otimes k}$ can be realized as the
following iteration of induction and restrictions:
\[
V_n^{\otimes k} \cong
({\text{Ind}}^{S_n}_{S_{n-1}}
{\text{Res}}^{S_n}_{S_{n-1}})^k {\mathbb{S}^{(n)}},
\]
where ${\mathbb{S}}^{(n)}$ is the trivial representation of the
symmetric group $S_n$.  The half-partition algebras
$\mathsf{P}_{k+\frac{1}{2}}(n)$
fill in the chain of algebras as the centralizer
of the symmetric group $S_{n-1}$ acting
on ${\text{Res}}^{S_n}_{S_{n-1}} V_n^{\otimes k}$.

The partition algebra shows promise for applications in
developing a better understanding of the representation theory
of the symmetric groups.
Indeed, Bowman-De Vischer and the first author \cite[Equation (3.13)]{BDO}
identified the see-saw pair,
\begin{center}
\begin{tikzpicture}
\node (C) at (0,0)  {$V_n^{\otimes k + \ell}$} ;
\node (S2) at (2,-1)  {$S_n$} ;
\node (S1) at (2,1)  {$S_n \times S_n$} ;
\node (P2) at (-2,-1) {$\mathsf{P}_k(n) \otimes \mathsf{P}_\ell(n)$} ;
\node (P1) at (-2,1) {$\mathsf{P}_{k+\ell}(n)$} ;
\draw[{Hooks[right]}->] (S2.north) to (S1.south) ;
\draw[{Hooks[right]}->] (P2.north) to (P1.south) ;
\draw[-] (S1.south west) to (C.north east) ;
\draw[-] (S2.north west) to (C.south east) ;
\draw[-] (P1.south east) to (C.north west) ;
\draw[-] (-1.4,-.7) to (C.south west) ;
\end{tikzpicture}~,
\end{center}
and used it to show that for sufficiently large $n$,
the reduced Kronecker coefficients \cite{BOR, Mur, OZ2}
are the multiplicities of
an standard $\mathsf{P}_k(n) \otimes \mathsf{P}_\ell(n)$ module in the restriction of an irreducible $\mathsf{P}_{k+\ell}(n)$ module. 
They used that relationship to develop formulae relating
the reduced Kronecker coefficients, Littlewood-Richardson coefficients
and the usual Kronecker coefficients.

This approach seems like a difficult yet promising way to gain a better
combinatorial understanding of the reduced Kronecker coefficients.
Bowman, De Visher and Enyang \cite{BDE} used this connection and bases
for irreducible representations of the partition algebras to
develop combinatorial interpretations for certain classes of
the reduced Kronecker coefficients using the structure of the tower
of algebras.

One source of the complexity of this approach could be that
the space $V_n$ is not irreducible as an $S_n$ module since
$V_n \cong {\mathbb{S}}^{(n-1,1)} \oplus {\mathbb{S}}^{(n)}$.
Daugherty and the first author \cite{DO} introduced a subalgebra of the partition algebra
that is related to this picture by instead of being the centralizer of the action of the
symmetric group on $V_n^{\otimes k}$, is the centralizer of
the symmetric group when it acts on $(\mathbb{S}^{(n-1,1)})^{\otimes k}$.
They showed that the quasi-partition algebra $\mathsf{QP}_k(n)$
can also be realized as a projection of the partition algebra by an idempotent.

We note that there are other diagram algebras that are projections by
idempotents that can be found in the literature.  A recent paper by
Doty and Giaquinto \cite{DG} defines balanced elements of a planar
diagram algebra by conjugating by an idempotent of the partial Temperley-Lieb algebra.
The techniques we use to analyze the quasi-partition algebra
can apply to other algebras.

In this paper we develop the representation theory of the quasi-partition
algebras using a construction similar to the inclusion of the half-partition
algebras.  Let $\mathrm{proj}$ be the projection from $V_n$ to the
$S_n$ irreducible representation $\mathbb{S}^{(n-1,1)}$.  Then we remark
that
\[
(\mathbb{S}^{(n-1,1)})^{\otimes k} \cong
(\mathrm{proj} \circ {\text{Ind}}^{S_n}_{S_{n-1}}
{\text{Res}}^{S_n}_{S_{n-1}})^k {\mathbb{S}^{(n)}}~.
\]
Now in a similar construction to the partition algebra and the
half-partition algebra we have that there are two intermediate algebras
between the centralizer of $(\mathbb{S}^{(n-1,1)})^{\otimes k}$,
namely the centralizers of
\[
{\text{Res}}^{S_n}_{S_{n-1}} (\mathbb{S}^{(n-1,1)})^{\otimes k}\hbox{ and }
{\text{Ind}}^{S_n}_{S_{n-1}} {\text{Res}}^{S_n}_{S_{n-1}} (\mathbb{S}^{(n-1,1)})^{\otimes k}
\cong (\mathbb{S}^{(n-1,1)})^{\otimes k} \otimes V_n
\]
which we name $\mathsf{QP}_{k+\frac{1}{2}}(n)$ and $\widetilde{\mathsf{QP}}_{k+1}(n)$.
This allows us to develop a Bratteli-like
diagram (see Figure \ref{fig:Bdiagram}) to describe the relationship between the irreducible representations.
We say that this is `Bratteli-like' rather than exactly a Bratteli diagram
because the edges between the
irreducibles of $\widetilde{\mathsf{QP}}_{k+1}(n)$ and the irreducibles of
$\mathsf{QP}_{k+1}(n)$ represent a projection rather than an inclusion.

In analogy with the observations in \cite{BDO}, we remark that there
is a similar see-saw pair,
\begin{center}
\begin{tikzpicture}
\node (C) at (.3,0)  {$\left(\mathbb{S}^{(n-1,1)}\right)^{\otimes k + \ell}$} ;
\node (S2) at (2,-1)  {$S_n$} ;
\node (S1) at (2,1)  {$S_n \times S_n$} ;
\node (P2) at (-2,-1) {$\mathsf{QP}_k(n) \otimes \mathsf{QP}_\ell(n)$} ;
\node (P1) at (-2,1) {$\mathsf{QP}_{k+\ell}(n)$} ;
\draw[{Hooks[right]}->] (S2.north) to (S1.south) ;
\draw[{Hooks[right]}->] (P2.north) to (P1.south) ;
\draw[-] (1.4,.7) to (.5,.4) ;
\draw[-] (1.4,-.7) to (.5,-.4) ;
\draw[-] (-1.4,.7) to (-.5,.4) ;
\draw[-] (-1.4,-.7) to (-.5,-.4) ;
\end{tikzpicture}~.
\end{center}
Hence we have in analogy that the reduced Kronecker coefficients are the multiplicities of
an irreducible $\mathsf{QP}_k(n) \otimes \mathsf{QP}_\ell(n)$ module in the restriction of
an irreducible $\mathsf{QP}_{k+\ell}(n)$ module.

This paper develops the
quasi-partition algebras both as centralizer algebras (Theorems \ref{th:QPkcentralizer},
\ref{th:QPkhalfcentralizer} and \ref{th:QPktildecentralizer})
and as projections of the partition algebra conjugated
by an idempotent (Equation \eqref{eq:threealgebras}).
The main results are the construction of a tower of
quasi-partition algebras (Subsection \ref{subsec:quasitower}) and an explicit description of
bases of the irreducible representations of $\mathsf{QP}_k(n)$
(Section \ref{sec:QPirreps}).  The tower of algebras is used to
relate the dimensions of the irreducibles in the tower
using the inclusions and projections (Equations \eqref{eq:p1}, \eqref{eq:p2} and \eqref{eq:p3}).
This allows us to give a combinatorial interpretation of the dimensions of
the irreducibles (Corollary \ref{cor:combdims}) and formulae for the dimensions in terms of
the numbers of set partitions and numbers of standard tableaux (Equations \eqref{eq:fulldim1} and \eqref{eq:halfdim1}).

Section \ref{sec:partitionalg} of this paper begins with a review of the
partition algebras, half-partition algebras,
and in Section \ref{sec:repsPk}, we give a construction of their
irreducibles in terms of the standard modules.  Although the
standard modules for the partition algebras were constructed and expressed in terms of
bases indexed by set valued tableaux in \cite{HJ}, they do not include the standard modules
of the half-partition algebras and so we give a description of those modules
in Subsection \ref{sec:halfPkreps}.
In Section \ref{sec:quasipalgebra}, we present three families of quasi-partition algebras,
and use them to build a tower of algebras.
In Section \ref{sec:QPirreps}, we construct the irreducible representations of
quasi-partition algebras.
In Section \ref{sec:quasicentralizers}, we show that the partition algebras that we defined
as projections of the partition algebra are isomorphic to centralizers
when the parameter is specialized to values of $n$ that are sufficiently large.
Finally, in Section \ref{sec:formulaedim} we use the tower of algebras to give formulae for the
dimensions of the irreducible representations.

\subsection*{Acknowledgements}
R. Orellana was partially supported by NSF grant DMS-2153998
and both N. Wallace and M. Zabrocki are supported by NSERC. 

\section{The partition algebras}\label{sec:partitionalg}

The partition algebra was originally defined by Martin in \cite{marbook}.  All the results in this section are due to Martin and his collaborators, see \cite{mar1} and references therein.  For a nice survey on the partition algebra see \cite{HR}.  
\subsection{Definitions} \label{subsec:defs}
For $k\in \mathbb{Z}_{>0}$, $x\in\mathbb{C}$, we let $\mathsf{P}_k(x)$
denote the complex vector space with bases given by  all set partitions of
$[k] \cup [\o{k}] := \{1,2,\ldots, k, \o{1},\o{2}, \ldots, \o{k}\}.$
A part of a set partition is called a \emph{block}.
For a given block $B$, the set  $B \cap [\o{k}]$
denotes the subset of all barred elements of $B$ are referred to as the \defn{bottom} of $B$
and the set $B \cap [k]$ denotes the subset of all unbarred elements of $B$ and is referred to as the \defn{top} of $B$. 
Notice that for a given set partition $d$ on $2k$ elements,
then $d\cap[k]$ and $d\cap[\o{k}]$ are set partitions on $k$ elements.
We will let $\Ph_k$ denote the set of all set partitions
of $\{1,2,\ldots, k, \o{1},\o{2}, \ldots, \o{k}\}.$

Blocks with a single element will either be referred to as \defn{singletons} or an
\defn{isolated vertex}.  Blocks containing at least one element from $[k]$
and one element from $[\o{k}]$ will be called \defn{propagating blocks}; all other blocks will
be called \defn{non-propagating blocks}.

For example,
$$d=\{\{1, 2, 4, \o{2}, \o{5}\}, \{3\}, \{5, 6, 7, \o{3}, \o{4}, \o{6}, \o{7}\}, \{8, \o{8}\}, \{\o{1}\}\},$$
is a set partition (for $k=8$) with 5  blocks. The block $B=\{1, 2, 4, \o{2}, \o{5}\}$ is  propagating. The block $\{3\}$ is a singleton.

A  set partition in $\Ph_k$ can be represented
 by a \defn{partition diagram}
 consisting of a frame with $k$ distinguished points
 on the top and bottom boundaries, which we call vertices.
 We number the top vertices from left to right by $1,2,\ldots, k$ and the bottom vertices similarly by $\o{1},\o{2},\ldots, \o{k}$. We create a graph with connected components corresponding to the blocks of the set partition such that there is a path of edges between two vertices
 if they belong to the same block.  Note that such a diagram is not uniquely defined,
 two diagrams representing the set partition $d$ above are given in Figure \ref{2diag}. Hence, a partition diagram is an equivalence class of graphs, where the equivalence is given by having the same connected components. In addition, we often omit the numbering on the vertices in the interest of keeping the diagrams less cluttered.

We will use the word \defn{diagram} to refer to any element of $\Ph_k$
or equivalently its partition diagram.

\begin{figure}[ht]
\begin{equation*}
\begin{array}{c}
\scalebox{1.0}{\begin{tikzpicture}[scale=.55,line width=1.35pt]
\foreach \i in {1,...,8}
{ \path (\i,2) coordinate (T\i); \path (\i,0) coordinate (B\i); }
\filldraw[fill=gray!25,draw=gray!25,line width=4pt]  (T1) -- (T8) -- (B8) -- (B1) -- (T1);
\draw (T1) .. controls +(.1,-.6) and +(-.1,-.6) .. (T2);
\draw (T2) .. controls +(.1,-.6) and +(-.1,-.6) .. (T4);
\draw (T4) -- (B5);
\draw (T5) .. controls +(.1,-.6) and +(-.1,-.6) .. (T6);
\draw (T6) .. controls +(.1,-.6) and +(-.1,-.6) .. (T7);
\draw (T7) -- (B7);
\draw (T8) -- (B8);
\draw (B3) .. controls +(.1,.6) and +(-.1,.6) .. (B4);
\draw (B2) .. controls +(.1,1) and +(-.1,1) .. (B5);
\draw (B4) .. controls +(.1,.8) and +(-.1,.8) .. (B6);
\draw (B6) .. controls +(.1,.6) and +(-.1,.6) .. (B7);
\foreach \i in {1,...,8}
{\draw  (B\i)  node[below=0.05cm]{${\overline{\i}}$}; \draw  (T\i)  node[above=0.05cm]{${\i}$};}
\foreach \i in {1,...,8}
{ \fill (T\i) circle (4.5pt); \fill (B\i) circle (4.5pt); }
\end{tikzpicture}} \end{array}
\hskip .3in
\begin{array}{c}
\scalebox{1.0}{\begin{tikzpicture}[scale=.55,line width=1.35pt]
\foreach \i in {1,...,8}
{ \path (\i,2) coordinate (T\i); \path (\i,0) coordinate (B\i); }
\filldraw[fill=gray!25,draw=gray!25,line width=4pt]  (T1) -- (T8) -- (B8) -- (B1) -- (T1);
\draw (T1) .. controls +(.1,-.6) and +(-.1,-.6) .. (T2);
\draw (T1) -- (B2) ;%
\draw (B2) -- (T4) ;%
\draw (T4) -- (B5);%
\draw (T5) .. controls +(.1,-1) and +(-.1,-1) .. (T7);
\draw (T6) .. controls +(.1,-.6) and +(-.1,-.6) .. (T7);
\draw (T6) -- (B7);
\draw (T8) -- (B8);
\draw (B3) .. controls +(.1,.6) and +(-.1,.6) .. (B4);
\draw (B4) .. controls +(.1,.8) and +(-.1,.8) .. (B6);
\draw (B6) .. controls +(.1,.6) and +(-.1,.6) .. (B7);
\foreach \i in {1,...,8}
{\draw  (B\i)  node[below=0.05cm]{${\overline{\i}}$}; \draw  (T\i)  node[above=0.05cm]{${\i}$};}
\foreach \i in {1,...,8}
{ \fill (T\i) circle (4.5pt); \fill (B\i) circle (4.5pt); }
\end{tikzpicture}} \end{array}.
\end{equation*}

  \caption{Two representative graphs of the set partition $d$.}
\label{2diag}
\end{figure}
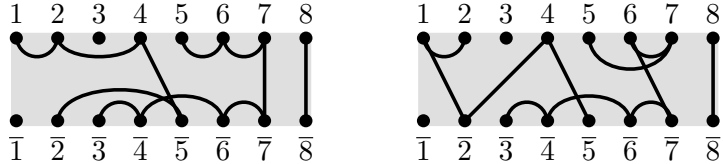

We define an internal product, $d_1 \cdot d_2$, of two diagrams $d_1$
and $d_2$ using the concatenation of $d_1$ above $d_2$, where we identify
the bottom vertices of $d_1$ with the top vertices of $d_2$.   
If there are $m$ connected components consisting only of  middle vertices, then 
\[d_1\cdot d_2 = x^m d_3\]
where $d_3$ is the diagram with the middle vertices components removed.

\begin{example} For example, consider the two diagrams
$d_1, d_2 \in \mathsf{P}_{6}(x)$ where
$d_1 =$ $\{\{1,3,\o4\},$ $\{2,\o1\},\{4,5,6,\o5\},\{\o2,\o3\},\{\o6\}\}$
and $d_2 = \{\{1\},\{2,3\},\{4,\o1,\o2,\o4\},\{5,\o6\},\{6\},\{\o3,\o5\}\}$.
These elements are represented by the diagrams
\begin{equation*} 
d_1 =
\begin{array}{c}
\scalebox{1.0}{\begin{tikzpicture}[scale=.55,line width=1.35pt]
\foreach \i in {1,...,6}
{ \path (\i,1) coordinate (T\i); \path (\i,0) coordinate (B\i); }
\filldraw[fill=gray!25,draw=gray!25,line width=4pt]  (T1) -- (T6) -- (B6) -- (B1) -- (T1);
\draw (T1) .. controls +(.1,-.4) and +(-.1,-.4) .. (T3);
\draw (T4) .. controls +(.1,-.4) and +(-.1,-.4) .. (T5);
\draw (T5) .. controls +(.1,-.4) and +(-.1,-.4) .. (T6);
\draw (T6) -- (B5);
\draw (T5) .. controls +(.1,-.4) and +(-.1,-.4) .. (T6);
\draw (T2) -- (B1);
\draw (T3) -- (B4);
\draw (B2) .. controls +(.1,.4) and +(-.1,.4) .. (B3);
\foreach \i in {1,...,6}
{ \filldraw (T\i) circle (2pt); \filldraw (B\i) circle (2pt); }
\end{tikzpicture}} \end{array}\quad \text{ and } \quad
d_2 = 
\begin{array}{c}
\scalebox{1.0}{\begin{tikzpicture}[scale=.55,line width=1.35pt]
\foreach \i in {1,...,6}
{ \path (\i,1) coordinate (T\i); \path (\i,0) coordinate (B\i); }
\filldraw[fill=gray!25,draw=gray!25,line width=4pt]  (T1) -- (T6) -- (B6) -- (B1) -- (T1);
\draw (T2) .. controls +(.1,-.4) and +(-.1,-.4) .. (T3);
\draw (T4) -- (B4);
\draw (T5) -- (B6);
\draw (B1) .. controls +(.1,.4) and +(-.1,.4) .. (B2);
\draw (B2) .. controls +(.1,.4) and +(-.1,.4) .. (B4);
\draw (B3) .. controls +(.1,.4) and +(-.1,.4) .. (B5);
\foreach \i in {1,...,6}
{ \filldraw (T\i) circle (2pt); \filldraw (B\i) circle (2pt); }
\end{tikzpicture}} \end{array}~.
\end{equation*}
The product in the algebra may be calculated by
stacking the two diagrams and removing the middle row
of vertices, so that
\begin{equation*}
d_1d_2 =
\begin{array}{c}
\scalebox{1.0}{\begin{tikzpicture}[scale=.55,line width=1.35pt]
\foreach \i in {1,...,6}
{ \path (\i,1) coordinate (T\i); \path (\i,0) coordinate (B\i); }
\filldraw[fill=gray!25,draw=gray!25,line width=4pt]  (T1) -- (T6) -- (B6) -- (B1) -- (T1);
\draw (T1) .. controls +(.1,-.4) and +(-.1,-.4) .. (T3);
\draw (T4) .. controls +(.1,-.4) and +(-.1,-.4) .. (T5);
\draw (T5) .. controls +(.1,-.4) and +(-.1,-.4) .. (T6);
\draw (T6) -- (B5);
\draw (T5) .. controls +(.1,-.4) and +(-.1,-.4) .. (T6);
\draw (T2) -- (B1);
\draw (T3) -- (B4);
\draw (B2) .. controls +(.1,.4) and +(-.1,.4) .. (B3);
\foreach \i in {1,...,6}
{ \filldraw (T\i) circle (2pt); \filldraw (B\i) circle (2pt); }
\end{tikzpicture}}\\
\scalebox{1.0}{\begin{tikzpicture}[scale=.55,line width=1.35pt]
\foreach \i in {1,...,6}
{ \path (\i,1) coordinate (T\i); \path (\i,0) coordinate (B\i); }
\filldraw[fill=gray!25,draw=gray!25,line width=4pt]  (T1) -- (T6) -- (B6) -- (B1) -- (T1);
\draw (T2) .. controls +(.1,-.4) and +(-.1,-.4) .. (T3);
\draw (T4) -- (B4);
\draw (T5) -- (B6);
\draw (B1) .. controls +(.1,.4) and +(-.1,.4) .. (B2);
\draw (B2) .. controls +(.1,.4) and +(-.1,.4) .. (B4);
\draw (B3) .. controls +(.1,.4) and +(-.1,.4) .. (B5);
\foreach \i in {1,...,6}
{ \filldraw (T\i) circle (2pt); \filldraw (B\i) circle (2pt); }
\end{tikzpicture}}
\end{array}
= x^2
\begin{array}{c}
\scalebox{1.0}{\begin{tikzpicture}[scale=.55,line width=1.35pt]
\foreach \i in {1,...,6}
{ \path (\i,1) coordinate (T\i); \path (\i,0) coordinate (B\i); }
\filldraw[fill=gray!25,draw=gray!25,line width=4pt]  (T1) -- (T6) -- (B6) -- (B1) -- (T1);
\draw (T1) .. controls +(.1,-.4) and +(-.1,-.4) .. (T3);
\draw (T4) .. controls +(.1,-.4) and +(-.1,-.4) .. (T5);
\draw (T5) .. controls +(.1,-.4) and +(-.1,-.4) .. (T6);
\draw (T1) -- (B1);
\draw (T6) -- (B6);
\draw (B1) .. controls +(.1,.4) and +(-.1,.4) .. (B2);
\draw (B2) .. controls +(.1,.4) and +(-.1,.4) .. (B4);
\draw (B3) .. controls +(.1,.4) and +(-.1,.4) .. (B5);
\foreach \i in {1,...,6}
{ \filldraw (T\i) circle (2pt); \filldraw (B\i) circle (2pt); }
\end{tikzpicture}}~. \end{array}
\end{equation*}
\end{example}

Extending this by linearity defines a multiplication on $\Pa_k(x)$.
With this product, $\Pa_k(x)$ becomes an associative algebra with unit of dimension $B(2k)$,
the \defn{Bell number} which enumerates the number of set partitions of a set with $2k$ elements.


Given diagrams $d_1\in \Ph_{k_1}$ and $d_2\in \Ph_{k_2}$, we denote by $d_1\otimes d_2$
the diagram in $\Ph_{k_1+k_2}$ obtained by placing $d_2$ to the right of $d_1$.
Alternatively, in terms of set partition notation
$$d_1 \otimes d_2 = d_1 \cup \{ \{ b+k_1 : b \in B \} : B \in d_2\}~.$$
Although we will use the notation only rarely, this external product is extended linearly
to a product of elements from $\mathsf{P}_k(x)$ and $\mathsf{P}_\ell(x)$
with the result being an element in $\mathsf{P}_{k + \ell}(x)$.

All partition diagrams can be expressed by combining the following building blocks:
\[ \mathbf{1} :=
\raisebox{-.1in}{
\begin{tikzpicture}[scale=.55, line width=1.35]
	\filldraw[fill=gray!25,draw=gray!25,line width=4pt]  (-.1,0) -- (-.1,1) -- (.1,1) -- (.1,0) -- (-.1,0);
	 \filldraw [black] (0, 0) circle (2pt);
	 \filldraw [black] (0, 1) circle (2pt);
\draw (0,0)--(0,1);
\end{tikzpicture}}, \qquad
\mathsf{p} :=
\raisebox{-.1in}{
\begin{tikzpicture}[scale=.55, line width=1.35]
	\filldraw[fill=gray!25,draw=gray!25,line width=4pt]  (-.1,0) -- (-.1,1) -- (.1,1) -- (.1,0) -- (-.1,0);
	 \filldraw [black] (0, 0) circle (2pt);
	 \filldraw [black] (0, 1) circle (2pt);
\end{tikzpicture}}, \qquad
\mathsf{b} :=
\raisebox{-.1in}{
\begin{tikzpicture}[scale=.55, line width=1.35]
	\filldraw[fill=gray!25,draw=gray!25,line width=4pt]  (0,0) -- (0,1) -- (1,1) -- (1,0) -- (0,0);
	 \foreach \x in {0,1} {
	 \filldraw [black] (\x, 0) circle (2pt);
	 \filldraw [black] (\x, 1) circle (2pt);
	 }
	\draw (0,0)--(1,0)--(1,1)--(0,1)--(0,0);
\end{tikzpicture}}, \qquad
\mathsf{s} :=
\raisebox{-.1in}{
\begin{tikzpicture}[scale=.55, line width=1.35]
	\filldraw[fill=gray!25,draw=gray!25,line width=4pt]  (0,0) -- (0,1) -- (1,1) -- (1,0) -- (0,0);
	 \foreach \x in {0,1} {
	 \filldraw [black] (\x, 0) circle (2pt);
	 \filldraw [black] (\x, 1) circle (2pt);
	 }
	\draw (0,0)--(1,1) (0,1)--(1,0);
\end{tikzpicture}}~.
\]

It is known that $\Pa_k(x)$ is generated by the elements $\mathsf{s}_i$, $\mathsf{b}_i$  ($1\leq i< k$)  and $\mathsf{p}_j$ ($1\leq j\leq k$) depicted below (see \cite[Proposition 1]{mar1}).  The symbol used for
the elements does not contain an implicit value of $k$ (the maximum value in the diagram)
which may need to be determined from which algebra the element lies in.

\[ \mathsf{p}_j :=\mathbf{1}^{\otimes j -1}\otimes \mathsf{p} \otimes \mathbf{1}^{\otimes k-j} =
\raisebox{-.1in}{
\begin{tikzpicture}[scale=.55, line width=1.35]
	\filldraw[fill=gray!25,draw=gray!25,line width=4pt]  (0,0) -- (0,1) -- (6,1) -- (6,0) -- (0,0);
	 \foreach \x in {0,2,3,4,6} {
	 \filldraw [black] (\x, 0) circle (2pt);
	 \filldraw [black] (\x, 1) circle (2pt);
	 }
    \draw (1,.5) node {$\cdots$};
    \draw (5,.5) node {$\cdots$};
	\draw (0,0)--(0,1) (2,0)--(2,1) (4,0)--(4,1) (6,0)--(6,1);
\end{tikzpicture}}
\]
\[
\mathsf{b}_i := \mathbf{1}^{\otimes i -1}\otimes \mathsf{b} \otimes \mathbf{1}^{\otimes k-i-1} =
\raisebox{-.1in}{
\begin{tikzpicture}[scale=.55, line width=1.35]
	\filldraw[fill=gray!25,draw=gray!25,line width=4pt]  (0,0) -- (0,1) -- (7,1) -- (7,0) -- (0,0);
	 \foreach \x in {0,2,3,4,5,7} {
	 \filldraw [black] (\x, 0) circle (2pt);
	 \filldraw [black] (\x, 1) circle (2pt);
	 }
    \draw (1,.5) node {$\cdots$};
    \draw (6,.5) node {$\cdots$};
	\draw (0,0)--(0,1) (2,0)--(2,1) (3,0)--(3,1)--(4,1)--(4,0)--(3,0) (5,0)--(5,1) (7,0)--(7,1);
\end{tikzpicture}},
\]
\[ \mathbf{s}_i := \mathbf{1}^{\otimes i -1}\otimes \mathsf{s} \otimes \mathbf{1}^{\otimes k-i-1} =
\raisebox{-.1in}{
\begin{tikzpicture}[scale=.55, line width=1.35]
	\filldraw[fill=gray!25,draw=gray!25,line width=4pt]  (0,0) -- (0,1) -- (7,1) -- (7,0) -- (0,0);
	 \foreach \x in {0,2,3,4,5,7} {
	 \filldraw [black] (\x, 0) circle (2pt);
	 \filldraw [black] (\x, 1) circle (2pt);
	 }
    \draw (1,.5) node {$\cdots$};
    \draw (6,.5) node {$\cdots$};
	\draw (0,0)--(0,1) (2,0)--(2,1) (3,0)--(4,1) (4,0)--(3,1) (5,0)--(5,1) (7,0)--(7,1);
\end{tikzpicture}}
.
 \]
Another element that plays an important role in diagram algebras is the following: 
\[\mathsf{e} :=
\raisebox{-.1in}{
\begin{tikzpicture}[scale=.55, line width=1.35]
	\filldraw[fill=gray!25,draw=gray!25,line width=4pt]  (0,0) -- (0,1) -- (1,1) -- (1,0) -- (0,0);
	 \foreach \x in {0,1} {
	 \filldraw [black] (\x, 0) circle (2pt);
	 \filldraw [black] (\x, 1) circle (2pt);
	 }
	\draw (0,0)--(1,0) (0,1)--(1,1);
\end{tikzpicture}}
, \quad \text{ and } \quad
\mathsf{e}_i := \mathbf{1}^{\otimes i -1}\otimes \mathsf{e} \otimes \mathbf{1}^{\otimes k-i-1} =
\raisebox{-.1in}{
\begin{tikzpicture}[scale=.55, line width=1.35]
	\filldraw[fill=gray!25,draw=gray!25,line width=4pt]  (0,0) -- (0,1) -- (7,1) -- (7,0) -- (0,0);
	 \foreach \x in {0,2,3,4,5,7} {
	 \filldraw [black] (\x, 0) circle (2pt);
	 \filldraw [black] (\x, 1) circle (2pt);
	 }
    \draw (1,.5) node {$\cdots$};
    \draw (6,.5) node {$\cdots$};
	\draw (0,0)--(0,1) (2,0)--(2,1) (3,0)--(4,0) (3,1)--(4,1) (5,0)--(5,1) (7,0)--(7,1);
\end{tikzpicture}}.
\]
It can be easily checked that $\mathsf{e}_i = \mathsf{b}_i \mathsf{p}_i \mathsf{p}_{i+1} \mathsf{b}_i$ in $\Pa_k(x)$.

We remark that the generators $\mathsf{s}_i$ in $\Pa_k(x)$ generate the symmetric group $S_k$.  In addition, $\mathsf{p}_i^2= x\mathsf{p}_i$ and $\mathsf{b}_i^2 = \mathsf{b}_i$.  Which implies that $\mathsf{b}_i$s are idempotent and the $\mathsf{p}_i$s are multiples of idempotents. For a complete presentation of the partition algebra see \cite{HR} Theorem 1.11.

\vskip .1in

\noindent \textbf{Assumption:} The algebra $\mathsf{P}_k(x)$ is defined for any $x\in \mathbb{C}$.
However, we will be dividing by $x$; therefore, \emph{we assume throughout the paper that $x\neq 0$.}

\subsection{The half-partition algebra}
For $k\in \mathbb{Z}_{\ge 0}$, we define $\Pa_{k+\frac{1}{2}}(x)$
to be the subalgebra of $\Pa_{k+1}(x)$ spanned by diagrams $d \in \Ph_{k+1}$
such that $k+1$ and $\o{k+1}$ are in the same block.
We also set $\Pa_0(x)= \mathbb{C}(x)$ and $\Pa_{\frac{1}{2}}(x)$ as the span of $\mathbf{1}$.
The set of the set partitions, where $k+1$ and $\o{k+1}$
are in the same block, is closed under the diagram product.
These subalgebras were originally studied by Martin \cite{mar3}
(the notation for these algebras in that paper is $P_n^1(Q)$)
but here we follow the notation of in \cite{HR} and denote these subalgebras
$\Pa_{k+\frac{1}{2}}(x)$.

The algebra $\Pa_{k+\frac{1}{2}}(x)$ is generated by
$\mathsf{p}_i$, $\mathsf{b}_i$ and $\mathsf{s}_j$, where $1\leq i\leq k$ and $1 \leq j <k$
and the elements are set partitions with maximum value $k+1, \o{k+1}$.
Notice that $\Pa_{k+\frac{1}{2}}(x)$ is a subalgebra of
$\Pa_{k+1}(x)$ and has all of the same generators except for $\mathsf{p}_{k+1}$
and $\mathsf{s}_k$.
The algebra $\Pa_{k+\frac{1}{2}}(x)$ has  dimension $B(2k+1)$,
the number of set partitions of a set with $2k+1$ elements.
There is an injection of $\Pa_k(x)$ into $\Pa_{k+\frac{1}{2}}(x)$
which sends each element $d \in \Pa_k(x)$ to $d \otimes \mathbf{1}$.
This gives the tower of algebras:

$$\Pa_0(x) \hookrightarrow \Pa_{\frac{1}{2}}(x) \subset \Pa_1(x)
\hookrightarrow \Pa_{1+\frac{1}{2}}(x) \subset \Pa_2(x)
\hookrightarrow \Pa_{2+\frac{1}{2}}(x) \subset \Pa_3(x)
\hookrightarrow \cdots~.$$

\begin{example}
Let $r \in \frac{1}{2} \mathbb{Z}_{\geq 0}$, the sequence of dimensions of the algebras
$\mathsf{P}_{r}(x)$ for $r = 0,\frac{1}{2},1,\frac{3}{2},2,\ldots$ is given by
the number of set partitions of a set of size $2r$ and is equal to the sequence
$$1,1,2,5,15,52, 203, 877, 4140, 21147, \ldots~.$$
\end{example}
\subsection{Partition algebras as centralizers} Let $V_n = \mathbb{C}^n$, the symmetric group acts on $V_n$ via the permutation matrices
\[ \sigma \cdot v_i = v_{\sigma(i)}, \quad \text{ for } \sigma \in S_n.\]
Thus, $S_n$ acts diagonally on a basis of simple tensors of $V_n^{\otimes k}$, 
\[ \sigma \cdot (v_{i_1} \otimes \cdots \otimes v_{i_k}) = v_{\sigma(i_1)} \otimes \cdots \otimes v_{\sigma(i_k)} \]

The action of $\mathsf{P}_{r}(x)$ on an element of $V_n^{\otimes k}$ is given in Equation \eqref{eq:daction}.
Using this action one can show the following isomorphisms. 

For $n \geq 2k$, $k\in \mathbb{Z}_{\geq 0}$,
\[ \mathsf{P}_k(n) \cong \text{End}_{S_n}(V_n^{\otimes k}) \]
and for $n \geq 2k +1$ and $k\in \mathbb{Z}_{\geq 0}$,
\[ \mathsf{P}_{k+\frac{1}{2}}(n) \cong \text{End}_{S_{n-1}}(\text{Res}^{S_n}_{S_{n-1}}V_n^{\otimes k})~. \]
For details and proofs see \cite{CST, HR, J3}.

\section{The representations of $\Pa_k(x)$}
\label{sec:repsPk}
The representation theory of the partition algebra was first studied by Martin and his collaborators,
the main references are \cite{mar1, mar3, MS} and references therein.
The algebra $\Pa_{k}(x)$ is semisimple \cite{MS} whenever $x \notin \{0, 1, 2, \ldots, 2k-2\}$. We will assume that $\mathsf{P}_k(x)$ is semisimple and in this case we will construct ``standard modules''
which form a complete set of simple modules.
In addition, we give the correspondence to set value tableaux \cite{HJ}, when doing this we further assume that $x=n$ is an integer such that $n\geq 2k$.

Recall that a block $B$ is called propagating if it contains at least one element from each $[k]$ and $[\overline{k}]$.  Notice that the product does not increase the number of propagating blocks,
in fact, if $p(d)$ denotes the number of propagating blocks and $d_1$ and $d_2$ are two diagrams, then
\[p(d_1d_2)\leq \min\{p(d_1), p(d_2)\}.\]
This gives a filtration of the algebra $\mathsf{P}_k(x)$ by number of propagating blocks.
For $m\in [k]$, set $I_{k,m} = \{m+1, m+2, \ldots k\}$, we define the following idempotents

\[ \tilde{\mathsf{p}}_{I_{k,m}} := \frac{1}{x^i}\mathsf{p}_{m+1}\mathsf{p}_{m+2}\cdots \mathsf{p}_k =
\raisebox{-.1in}{
\begin{tikzpicture}[scale=.55, line width=1.35]
	\filldraw[fill=gray!25,draw=gray!25,line width=4pt]  (0,0) -- (0,1) -- (6,1) -- (6,0) -- (0,0);
	 \foreach \x in {0,2,3,4,6} {
	 \filldraw [black] (\x, 0) circle (2pt);
	 \filldraw [black] (\x, 1) circle (2pt);
	 }
    \draw (1,.5) node {$\cdots$};
    \draw (5,.5) node {$\cdots$};
	\draw (0,0)--(0,1) (2,0)--(2,1); 
\end{tikzpicture}} ~.
\]
This is the diagram  where the vertices $m+1, m+2, \ldots, k$ and
$\overline{m+1}, \overline{m+2}, \ldots \overline{k}$ are isolated.
The propagation number of $\tilde{\mathsf{p}}_{I_{k,m}}$ is $m$.

Let $\mathbf{J}_m = \mathsf{P}_k(x) \tilde{\mathsf{p}}_{I_{k,m}} \mathsf{P}_k(x)$ is the two-sided
ideal spanned by all the diagrams with at most $m$ propagation blocks. This gives the filtration
\[ \mathbf{J}_k \subset \mathbf{J}_{k-1} \subset \cdots \subset \mathbf{J}_1 \subset \mathsf{P}_k(x).\]
And it can be shown that 
\begin{equation}\label{isoPk}
\tilde{\mathsf{p}}_{I_{k,m}} \mathsf{P}_k(x) \tilde{\mathsf{p}}_{I_{k,m}} \cong \mathsf{P}_{m}(x)
\end{equation}
and that 
\begin{equation} \label{isotoSk}
\mathsf{P}_k(x) /\mathbf{J}_1 \cong \mathbb{C}S_k.
\end{equation}
We note that the diagram elements of $\mathsf{P}_{m}(x)$ that have propagating number equal to $m$
are the permutations of $[m]$ and the other diagrams of $\mathsf{P}_{m}(x)$ have propagating
number strictly smaller than $m$.

This implies that an $S_{m}$ module can be \emph{inflated} to a $\mathsf{P}_k(x)$-module and by induction 
using \eqref{isoPk} and \eqref{isotoSk} and the following construction.
Given $\nu\in \mathsf{Par}_{\leq k}$ and $\nu\vdash m$, let
$\mathbb{S}^\nu$ be the irreducible representation of the symmetric
group $S_{m}$. We will identify this representation with a representation
of $\mathsf{P}_{m}(x)$ such that diagrams with propagation number less than $m$ act by zero.
We define a $\mathsf{P}_k(x)$-module,
$\Delta_k(\nu)$, by
\begin{equation}\label{standardmod}
\Delta_k(\nu) = \mathsf{P}_k(x) \tilde{\mathsf{p}}_{I_{k,m}} \otimes_{\mathsf{P}_{m}(x)} \mathbb{S}^\nu,
\end{equation}
where we have identified $\mathsf{P}_{m}(x)$ with
$\tilde{\mathsf{p}}_{I_{k,m}} \mathsf{P}_k(x) \tilde{\mathsf{p}}_{I_{k,m}}$ using the isomorphism in
\eqref{isoPk}.

It then follows that the simple $\mathsf{P}_k(x)$ modules are
indexed by the  set $\mathsf{Par}_{\leq k}$ containing all partitions of all non-negative integers
less than or equal to $k$.

We now describe a basis for the modules $\Delta_k(\nu)$ for any $m \leq k\in \mathbb{Z}_{\geq 0}$ and $\nu \vdash m$. Define $V(k, m)$ to be the vector space spanned by the diagrams corresponding to set partitions  of $[k]\cup[\overline{k}]$ with $\overline{m+1}, \ldots, \overline{k}$ in singleton blocks, and all other $\overline{j}$ are in propagating blocks where $\overline{j}$ is the only barred element in its block. We call these \defn{$(k,m)$-diagrams}. 

\begin{example} An example of two diagrams in $V(6,3)$:
\begin{center}
\scalebox{1.0}{\begin{tikzpicture}[scale=.55,line width=1.35pt]
\foreach \i in {1,...,6}
{ \path (\i,1) coordinate (T\i); \path (\i,0) coordinate (B\i); }
\filldraw[fill=gray!25,draw=gray!25,line width=4pt]  (T1) -- (T6) -- (B6) -- (B1) -- (T1);
\draw (T1) .. controls +(.1,-.4) and +(-.1,-.4) .. (T3);
\draw (T4) .. controls +(.1,-.4) and +(-.1,-.4) .. (T5);
\draw (T4) -- (B1);
\draw (T2) -- (B3);
\draw (T1) -- (B2);
\foreach \i in {1,...,6}
{ \filldraw (T\i) circle (2pt); \filldraw (B\i) circle (2pt); }
\end{tikzpicture}}, \qquad
{\begin{tikzpicture}[scale=.55,line width=1.35pt]
\foreach \i in {1,...,6}
{ \path (\i,1) coordinate (T\i); \path (\i,0) coordinate (B\i); }
\filldraw[fill=gray!25,draw=gray!25,line width=4pt]  (T1) -- (T6) -- (B6) -- (B1) -- (T1);
\draw (T1) .. controls +(.1,-.4) and +(-.1,-.4) .. (T3);
\draw (T4) .. controls +(.1,-.4) and +(-.1,-.4) .. (T6);
\draw (T5) -- (B3);
\draw (T2) -- (B1);
\draw (T3) -- (B2);
\foreach \i in {1,...,6}
{ \filldraw (T\i) circle (2pt); \filldraw (B\i) circle (2pt); }
\end{tikzpicture}}~.
\end{center}
\end{example}
In what follows, we identify $\sigma \in S_{m}$ with an element of the subgroup of $S_k$
contained in $\mathsf{P}_k(x)$ which
contains blocks $\{1, \o{\sigma(1)}\}, \ldots, \{ m, \o{\sigma(m)} \},
\{m+1, \overline{m+1}\}, \ldots \{k, \overline{k}\}$.

\begin{lemma}\label{V-module} If $d\in V(k,m)$, there exists $\sigma \in S_{m}$ such that $d = d' \sigma$ where $d'$ satisfies the condition that
if $B_{\overline{j}}$ denotes the propagating block containing $\overline{j}$, then $\text{max}(B_{\overline{j-1}}\cap [k]) < \text{max}(B_{\overline{j}}\cap [k])$ for all $1\leq j \leq m$.
\end{lemma}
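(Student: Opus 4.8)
The plan is to build the required factorization by hand: sort the propagating blocks of $d$ by the largest element appearing in their top, and let $\sigma$ be exactly the permutation that records this sorting.

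First I would pin down how a permutation acts by right multiplication in the conventions of this paper. Identify $\rho\in S_k$ with the diagram whose blocks are $\{i,\o{\rho(i)}\}$. In the product $d\cdot\rho$, each middle vertex $\o i$ of $d$ is joined below to exactly one bottom vertex of $\rho$, namely $\o{\rho(i)}$; hence no connected component is contained in the middle row, no power of $x$ is produced, and the sole effect of $d\mapsto d\rho$ is to rename the bottom vertex $\o i$ of $d$ as $\o{\rho(i)}$. In particular, if $d\in V(k,m)$ and $\rho\in S_m$ (so $\rho$ fixes $m+1,\dots,k$), then $d\rho\in V(k,m)$: the singletons $\o{m+1},\dots,\o k$ remain singletons, each propagating block keeps its unique barred vertex (relabelled bijectively within $\{\o1,\dots,\o m\}$), and the top $B\cap[k]$ of every block is unchanged.

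Next, for $1\le j\le m$ let $B_{\o j}^{(d)}$ be the propagating block of $d$ containing $\o j$ (unique by definition of $V(k,m)$), and set $t_j=\max\bigl(B_{\o j}^{(d)}\cap[k]\bigr)$; this maximum exists since propagating blocks have nonempty top, and the $t_j$ are pairwise distinct since the tops are disjoint subsets of $[k]$. Let $\tau\in S_m$ be the unique permutation with $t_{\tau(1)}<t_{\tau(2)}<\cdots<t_{\tau(m)}$, and put $\sigma:=\tau$ and $d':=d\cdot\tau^{-1}$. By the first step $d'\in V(k,m)$; tracing the relabelling $\o i\mapsto\o{\tau^{-1}(i)}$ shows that the propagating block $B_{\o j}^{(d')}$ of $d'$ containing $\o j$ is obtained from $B_{\o{\tau(j)}}^{(d)}$, so $\max\bigl(B_{\o j}^{(d')}\cap[k]\bigr)=t_{\tau(j)}$, which is strictly increasing in $j$ by the choice of $\tau$; thus $d'$ satisfies the stated inequalities. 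Finally $d'\sigma=d\cdot\tau^{-1}\tau=d$, since permutation diagrams compose as in $S_k$ and are invertible, so no scalar factor appears.

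The only delicate point is the bookkeeping of conventions: one must check that it is \emph{right} multiplication (not left) that relabels the barred bottom vertices --- left multiplication would renumber the top vertices and change the quantities $t_j$ --- and that composing with an invertible diagram contributes no power of $x$. Everything else is just ``sort the blocks and read off the permutation.''
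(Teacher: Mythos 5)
Your proof is correct and is essentially the paper's own argument: the paper likewise orders the propagating blocks by the maximum of their tops and reads off the permutation, only more tersely. Your extra care in verifying that right multiplication by a permutation diagram merely relabels the barred vertices (with no factor of $x$ and staying inside $V(k,m)$) fills in details the paper leaves implicit, but the route is the same.
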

\begin{proof}
Let $A_1$, $A_2, \ldots, A_{m}$ be the propagating blocks of $d \in V(k,m)$
ordered by largest element in $A_i \cap [k]$.
Define a permutation $\sigma \in S_m$, where for any $i\in [m]$, $\sigma(i) = j$ if
$\overline{j} \in A_i$.
\end{proof}

Because of Lemma \ref{V-module}, $V(k,m)$ has a natural structure of a $(\mathsf{P}_k(x), S_{m})$-bimodule, where $\mathsf{P}_k(x)$ acts on the left by diagram multiplication, with the understanding that the product is zero if the resulting diagram in not in $V(k,m)$ and $S_{m}$ acts on the right with diagram multiplication that permutes $\overline{1}, \ldots, \overline{m}$.

As vector spaces, we have
\begin{equation}
\Delta_k(\nu) \cong V(k,m)\otimes_{S_{m}} \mathbb{S}(\nu)~.
\end{equation}
A $(k,m)$-diagram is called \defn{$(k,m)$-standard} if its propagating blocks $B_{\overline{1}}, \ldots, B_{\overline{m}}$ satisfy $\text{max}(B_{\overline{j-1}}\cap [k]) < \text{max}(B_{\overline{j}}\cap [k])$ for all $1\leq j \leq m$.

\begin{example} If $i = 1$ and $k=3$, then the $(3,2)$-standard diagrams are:
\[
\scalebox{1}{{\begin{tikzpicture}[scale=.55,line width=1.35]
\foreach \i in {1,...,3}
{ \path (\i,1) coordinate (T\i); \path (\i,0) coordinate (B\i); }
\filldraw[fill=gray!25,draw=gray!25,line width=4pt]  
(T1) -- (T3) -- (B3) -- (B1) -- (T1);
\draw (T1) -- (B1);
\draw (T2) -- (B2);
\foreach \i in {1,...,3}
{ \filldraw (T\i) circle (2pt); \filldraw (B\i) circle (2pt); }
\end{tikzpicture}}, \quad
{\begin{tikzpicture}[scale=.55,line width=1.35pt]
\foreach \i in {1,...,3}
{ \path (\i,1) coordinate (T\i); \path (\i,0) coordinate (B\i); }
\filldraw[fill=gray!25,draw=gray!25,line width=4pt]  
(T1) -- (T3) -- (B3) -- (B1) -- (T1);
\draw (T1) -- (B1);
\draw (T3) -- (B2);
\foreach \i in {1,...,3}
{ \filldraw (T\i) circle (2pt); \filldraw (B\i) circle (2pt); }
\end{tikzpicture}}, \quad
{\begin{tikzpicture}[scale=.55,line width=1.35pt]
\foreach \i in {1,...,3}
{ \path (\i,1) coordinate (T\i); \path (\i,0) coordinate (B\i); }
\filldraw[fill=gray!25,draw=gray!25,line width=4pt]  
(T1) -- (T3) -- (B3) -- (B1) -- (T1);
\draw (T2) -- (B1);
\draw (T3) -- (B2);
\foreach \i in {1,...,3}
{ \filldraw (T\i) circle (2pt); \filldraw (B\i) circle (2pt); }
\end{tikzpicture}}, \quad 
{\begin{tikzpicture}[scale=.55,line width=1.35pt]
\foreach \i in {1,...,3}
{ \path (\i,1) coordinate (T\i); \path (\i,0) coordinate (B\i); }
\filldraw[fill=gray!25,draw=gray!25,line width=4pt]  
(T1) -- (T3) -- (B3) -- (B1) -- (T1);
\draw (T1) .. controls +(.1,-.4) and +(-.1,-.4) .. (T2);
\draw (T1) -- (B1);
\draw (T3) -- (B2);
\foreach \i in {1,...,3}
{ \filldraw (T\i) circle (2pt); \filldraw (B\i) circle (2pt); }
\end{tikzpicture}}, \quad 
{\begin{tikzpicture}[scale=.55,line width=1.35pt]
\foreach \i in {1,...,3}
{ \path (\i,1) coordinate (T\i); \path (\i,0) coordinate (B\i); }
\filldraw[fill=gray!25,draw=gray!25,line width=4pt]  
(T1) -- (T3) -- (B3) -- (B1) -- (T1);
\draw (T1) .. controls +(.1,-.4) and +(-.1,-.4) .. (T3);
\draw (T2) -- (B1);
\draw (T3) -- (B2);
\foreach \i in {1,...,3}
{ \filldraw (T\i) circle (2pt); \filldraw (B\i) circle (2pt); }
\end{tikzpicture}}, \quad 
{\begin{tikzpicture}[scale=.55,line width=1.35pt]
\foreach \i in {1,...,3}
{ \path (\i,1) coordinate (T\i); \path (\i,0) coordinate (B\i); }
\filldraw[fill=gray!25,draw=gray!25,line width=4pt]  
(T1) -- (T3) -- (B3) -- (B1) -- (T1);
\draw (T2) .. controls +(.1,-.4) and +(-.1,-.4) .. (T3);
\draw (T1) -- (B1);
\draw (T2) -- (B2);
\foreach \i in {1,...,3}
{ \filldraw (T\i) circle (2pt); \filldraw (B\i) circle (2pt); }
\end{tikzpicture}}}.
\]
\end{example}

Recall that the irreducible representations of $S_{m}$ have a basis in bijection with standard tableaux of size $m$.
For $\nu\vdash m$, the Young diagram of $\nu$ is the left-justified array of boxes,
with $\nu_i$ boxes in the $i$-th row, we use the French notation which counts rows from bottom to top.
A \emph{standard tableau} of $\nu$ is a filling of the boxes of the Young diagram of $\nu$ with numbers
$1, \ldots, m$ so that the numbers increase along rows from left to right and along columns from bottom
to top.  The \emph{shape} of a tableau is the partition obtained by the sizes of the rows.

For $0\leq m\leq k$ and $\nu\vdash m$, a basis of $\Delta_k(\nu)$ is defined by
\begin{equation}
\mathcal{B}_{k}(\nu) = \{ d\otimes T\, |\, d \text{ is a } (k,m)\text{-standard and }
T \text{ is a standard tableau of shape } \nu\}.
\end{equation}

A diagram $d\in \mathsf{P}_k(x)$ acts on a basis element $d'\otimes T$ of $\Delta_k(\nu)$ by left multiplication,

\begin{equation}\label{standard-action}
d\cdot d'\otimes T = \begin{cases} dd' \otimes T & \text{ if } p(dd') = m  \\
0 & \text{otherwise}\end{cases},
\end{equation}
in the case that $p(dd') = m$,  we use the product of diagrams and Lemma \ref{V-module}
to get $dd' = x^a d_1\tau$ where $d_1$ is a $(k,m)$-standard diagram and $\tau \in S_{m}$.
Hence, $dd' \otimes T  = x^a d_1 \otimes \tau \cdot T$, where $\tau\cdot T$ might not be
standard, but can be written as a linear combination of standard tableaux using the
Garnir straightening algorithm for Specht modules (see for instance \cite{Sagan}).

\begin{remark}
Notice that $d$ acts by zero if after multiplying $dd'=x^a d_1$, we have
(1) two propagating blocks in $d'$ become connected in $d_1$; or
(2) $\overline{j}$ for $1\leq j \leq m$ does not propagate in $d_1$.
\end{remark}


These modules coincide with the modules constructed by Martin \cite{mar1} in Definition 7.  When $\mathsf{P}_k(x)$ is semisimple, the set $\{ \Delta_k(\nu) \, |\, \nu \in \mathsf{Par}_{\leq k}\}$ is a complete set of non-isomorphic simple modules. We refer to these modules as the \emph{standard} modules of the partition algebra.

By counting the elements in $\mathcal{B}_k(\nu)$ we find the dimensions of the standard modules. 
\begin{cor}\label{cor:dimDeltaknu} (\cite{BHH} Proposition 5.10) 
For $k\in \mathbb{Z}_{\geq 0}$ and $0 \leq m \leq k$, we have that 
    \[ \text{dim}(\Delta_k(\nu)) = \sum_{i = m} ^k
    \stirling{k}{i} \binom{i}{m} f^\nu\]
where $\stirling{k}{i}$ are the Stirling numbers of the second kind,
which enumerate the number of set partitions of $[k]$ into $i$ parts and $f^\nu$ is the dimension
of $\mathbb{S}^\nu$, or the number of standard tableaux of shape $\nu$.
\end{cor}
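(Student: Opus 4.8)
The plan is to count the basis $\mathcal{B}_{k}(\nu)$ directly. By construction $\mathcal{B}_{k}(\nu)=\{d\otimes T\}$ where $d$ ranges over $(k,m)$-standard diagrams and $T$ ranges over standard tableaux of shape $\nu$, and these two choices are independent; hence $\text{dim}(\Delta_k(\nu)) = N_{k,m}\cdot f^\nu$, where $N_{k,m}$ denotes the number of $(k,m)$-standard diagrams and $f^\nu = \dim \mathbb{S}^\nu$. So the whole statement reduces to proving
\[
N_{k,m}=\sum_{i=m}^{k}\stirling{k}{i}\binom{i}{m}.
\]

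To count $(k,m)$-standard diagrams I would set up a bijection with pairs $(\pi,S)$ consisting of a set partition $\pi$ of $[k]$ together with an $m$-element subset $S$ of the blocks of $\pi$. Given a $(k,m)$-standard diagram $d$, by definition $\overline{m+1},\dots,\overline{k}$ are singletons and each of $\overline 1,\dots,\overline m$ lies in a propagating block containing no other barred vertex; therefore the restriction of $d$ to $[k]$ is a set partition $\pi$ of $[k]$, and the $m$ blocks of $\pi$ attached to a barred vertex (the propagating ones) form an $m$-subset $S=\{B_{\overline 1}\cap[k],\dots,B_{\overline m}\cap[k]\}$. Conversely, given $(\pi,S)$ with $|S|=m$, the blocks of $S$ have pairwise distinct maxima, so there is a unique listing $C_1,\dots,C_m$ of $S$ with $\max C_1<\cdots<\max C_m$; attaching $\overline j$ to $C_j$ and declaring $\overline{m+1},\dots,\overline{k}$ singletons yields a $(k,m)$-standard diagram (the standard condition $\max(B_{\overline{j-1}}\cap[k])<\max(B_{\overline j}\cap[k])$ is precisely the requirement that the bars be matched to $S$ in order of maxima), and these two maps are mutually inverse.

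It then remains to enumerate the pairs $(\pi,S)$, which I would do by grouping according to the number $i$ of blocks of $\pi$: there are $\stirling{k}{i}$ set partitions of $[k]$ into $i$ blocks, each contributing $\binom{i}{m}$ choices of the $m$-subset $S$. Summing over $i$ and using that $\binom{i}{m}=0$ for $i<m$ while $\pi$ has at most $k$ blocks gives $N_{k,m}=\sum_{i=m}^{k}\stirling{k}{i}\binom{i}{m}$; multiplying by $f^\nu$ gives the claimed formula.

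The only point requiring genuine care — indeed essentially the entire content of the argument — is checking that the correspondence $d\leftrightarrow(\pi,S)$ really is a bijection, i.e. that the standard condition eliminates exactly the $m!$ ways of matching $\{\overline 1,\dots,\overline m\}$ with the chosen blocks and that no further constraint is imposed on $\pi$ or on the non-propagating blocks. Everything else is routine bookkeeping. As a sanity check, for $m=0$ (so $\nu=\varnothing$, $f^\nu=1$) the formula collapses to the Bell number $\sum_{i}\stirling{k}{i}=B(k)$, which is indeed the number of set partitions of $[k]$ and agrees with $\text{dim}(\Delta_k(\varnothing))$.
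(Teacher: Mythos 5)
Your argument is correct and matches the paper's (implicit) justification: the paper obtains this formula by counting the basis $\mathcal{B}_k(\nu)$, exactly as you do, with the count of $(k,m)$-standard diagrams organized by the set partition of $[k]$ together with an $m$-subset of its blocks ordered by maxima (equivalently, the $[k]$-set valued tableaux of shape $(n-|\nu|,\nu)$ mentioned after Definition \ref{def:bijectionrho}). Your bijection $d\leftrightarrow(\pi,S)$ simply spells out the details the paper leaves to the citation of \cite{BHH}, and your check against the six $(3,2)$-standard diagrams and the $m=0$ Bell-number case confirms it.
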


\subsection{A tableau model for the standard modules} In this 
section we show that the basis $\mathcal{B}_{k}(\nu)$ is in bijection with set-filled tableaux.  We will set $x=n\in \mathbb{Z}_{\geq 0}$ and $n\geq 2k$.

\begin{definition}\label{def:settableaux}
For $k \in \mathbb{Z}_{\geq 0}$, $n\geq 2k$ and $0\leq i \leq k$, let $\lambda$
be a partition of $n$, a \defn{$[k]$-set valued tableau $T$ of shape $\lambda$}
satisfies the following conditions:
\begin{enumerate}
    \item The sets filling the boxes of the Young diagram of $\lambda$ form a set partition $\alpha$ of $[k]$, the sets in $\alpha$ are called blocks.
    \item Every box in rows $\lambda_2, \ldots, \lambda_\ell$ is filled with a block in $\alpha$. 
    \item Boxes at the end of the first row of $\lambda$ could contain blocks of $\alpha$ and, because of the
    condition that $n \geq 2k$, there are at least $k$ empty boxes preceding the boxes containing sets.
\end{enumerate}
\end{definition}
Let $\mathcal{T}_k(\lambda)$ denote the set of all $[k]$-set valued tableaux of shape $\lambda$.

\begin{example}\squaresize=15pt The standard $[3]$-set valued tableaux in $\mathcal{T}_3((n-2,2))$ are:
\[
\young{1&2\cr&&&\cdots&&3\cr}~,\quad
\young{1&3\cr&&&\cdots&&2\cr}~,\quad
\young{2&3\cr&&&\cdots&&1\cr}~,\]
\[\young{12&3\cr&&&\cdots&&\cr}~,\quad
\young{2&13\cr&&&\cdots&&\cr}~,\quad
\young{1&23\cr&&&\cdots&&\cr}~,
\]
where each of the tableaux listed above have a first row with $n-2$ boxes.
\end{example}

\begin{definition}\label{def:bijectionrho}
Let $k\in \mathbb{Z}_{\geq 0}$, $n\geq 2k$, and $0\leq m \leq k$, and $\nu\vdash m$.  We know define a map
$\rho:\mathcal{B}_{k}(\nu)\rightarrow \mathcal{T}_k((n-|\nu|,\nu))$ for $d\otimes T \in \mathcal{B}_{k}(\nu)$
assign a $[k]$-set valued tableau as follows:
\begin{enumerate}\label{alg:tableau}
\item For each propagating block in $d$,
replace $j$ in $T$ by the block $B_{\overline{j}}$ containing $\overline{j}$.
\item Create a set valued tableau of shape $(n-|\nu|, \nu)$
by adding a first row of length $n-|\nu|$ to the tableaux obtained in step (1).
\item Place all non-propagating blocks in the top of $d$ in last-letter order in the boxes at the end of the row $n-|\nu|$ added in step (2).
\end{enumerate}
\end{definition}

It is relatively easy to see that the map $\rho$ is invertible.
The sets in the first row of the tableau correspond to non-propagating blocks
in the $(k,m)$-standard diagram.
The propagating blocks in the $(k,m)$-standard diagram
are determined by connecting $\overline{j}$ to the $j^{th}$ largest set (by maximum value in the set)
for $1 \leq j \leq m$.
The standard tableau is determined once the $(k,m)$-standard diagram is known.  Therefore, $\rho$ is a bijection. 

\begin{example}\label{ex:settableau}
\squaresize=18pt
Correspondence between a basis element $d\otimes T\in \Delta_9((2,1))$ and a $[9]$-set valued
tableau of shape $(n-3,2,1)$.
\[
\scalebox{1.0}{{\begin{tikzpicture}[scale=.55,line width=1.35pt]
\foreach \i in {1,...,9}
{ \path (\i,1) coordinate (T\i); \path (\i,0) coordinate (B\i); }
\filldraw[fill=gray!25,draw=gray!25,line width=4pt]  
(T1) -- (T9) -- (B9) -- (B1) -- (T1);
\draw (T1) .. controls +(.3,-.4) and +(-.3,-.4) .. (T3);
\draw (T2) -- (B1);
\draw (T3) -- (B2);
\draw (T4) -- (B3);
\draw (T4) .. controls +(.1,-.4) and +(-.1,-.4) .. (T5);
\draw (T5) .. controls +(.1,-.4) and +(-1,-.4) .. (T9);
\draw (T6) .. controls +(.1,-.2) and +(-.1,-.2) .. (T7);
\foreach \i in {1,...,9}
{ \filldraw (T\i) circle (2pt); \filldraw (B\i) circle (2pt); }
\foreach \i in {1,...,9}
{\draw  (B\i)  node[below=0.05cm]{${\overline{\i}}$}; \draw  (T\i)  node[above=0.05cm]{${\i}$};}
\end{tikzpicture} }\hskip -.1in
\raisebox{-.05in}{\begin{tikzpicture}[scale=.55,line width=1.35pt]
\draw[thin] (0,0)--(2,0)--(2,1)--(1,1)--(1,2)--(0,2)--(0,0);
\draw[thin] (1,0)--(1,1);
\draw[thin]  (0,1)--(1,1);
\node(11) at (0.5,0.5){$1$};
\node(12) at (1.5,0.5){$3$};
\node(21) at (0.5,1.5){$2$};
\node(e) at (1,-.75){};
\node(f) at (-0.5,1){$\otimes$};
\node(g) at (4,1){$\stackrel{\rho}{\longmapsto}$};
\end{tikzpicture}}}\quad
\underbrace{\young{13\cr2&459\cr&&&\cdots&&67&8\cr}}_{n-3 \text{ boxes }}
\]
\end{example}

Using the notation from \cite{HJ}, we let for any $\nu\vdash m$, with $0\leq m\leq k$
\[ \mathsf{P}_k^\nu = \mathbb{C}\text{-Span}\{ N_S : S\in \mathcal{T}_k((n-|\nu|,\nu))\}~. \]

In \cite{HJ} Definition 4.7, the authors define an action of the diagrams in $\mathsf{P}_k(n)$ on $\mathsf{P}_k^\nu$.  In Section 3 of \cite{HJ}, the authors show that $\mathsf{P}_k^\nu \cong \Delta_k(\nu)$ as $\mathsf{P}_k(n)$-modules, in particular they called the $(k,m)$-standard diagrams ``noncrossing $m$-factors". We skip these details here as we are interested in tableaux to serve as indexing sets for bases to help describe the dimensions of the modules. For example, we can see that Corollary \ref{cor:dimDeltaknu} is counting the number of $[k]$-set valued tableaux of shape $(n-|\nu|,\nu)$.

\begin{remark}
      Let $k\in \mathbb{Z}_{\geq 0}$, $n\geq 2k$, and $0\leq m \leq k$, and $\nu\vdash m$.
    The map $\rho: \mathcal{B}_{k}(\nu)\rightarrow \mathcal{T}_k((n-|\nu|,\nu))$ is a bijection.  In Section \ref{sec:QPirreps}, we will use the linear extension of $\rho$ and apply it to linear combinations. 
\end{remark}

\subsection{Representations of the half-partition algebra}\label{sec:halfPkreps}
The irreducible representations of the half-algebras using tableaux was not considered in \cite{HJ}
but Halverson-Ram \cite{HR} constructed the matrix units for the half-partition algebras using Jones' basic construction.

In this section we describe the standard modules and then give a correspondence to tableaux modules.  The construction of the simple standard modules of $\mathsf{P}_{k+\frac{1}{2}}(x)$ is similar to that of $\mathsf{P}_k(x)$. In this section we assume $x\geq 2k+2$. The main difference in the construction is that we use the idempotent
$\tilde{\mathsf{p}}_{I_{k,m}}\otimes \mathbf{1}$. We define left modules for any $\nu\vdash m$ as follows
\[
\Delta_{k+\frac{1}{2}}(\nu) =
\mathsf{P}_{k+\frac{1}{2}}(x)(\tilde{\mathsf{p}}_{I_{k,m}}\otimes \mathbf{1})
\otimes_{\mathsf{P}_{{m}+\frac{1}{2}}(x)} \mathbb{S}^\nu
\]
where we have used the corresponding identifications as we did in the construction of the standard modules for $\mathsf{P}_k(x)$.

To define a linear basis for this module, we first define a vector space
$V_{\frac{1}{2}}(k+1, m)$ as the span of all diagrams with exactly $m+1$ propagating blocks,
each containing exactly one bottom element in the set
$\{\overline{1}, \ldots, \overline{m}, \overline{k+1}\}$,
and the vertices $\{\overline{m+1}, \ldots, \overline{k}\}$ are singletons.

\begin{example} Elements in $V_{\frac{1}{2}}(8, 3)$:
\begin{center}
\scalebox{1.0}{\begin{tikzpicture}[scale=.55,line width=1.35pt]
\foreach \i in {1,...,8}
{ \path (\i,1) coordinate (T\i); \path (\i,0) coordinate (B\i); }
\filldraw[fill=gray!25,draw=gray!25,line width=4pt]  (T1) -- (T8) -- (B8) -- (B1) -- (T1);
\draw (T1) .. controls +(.1,-.4) and +(-.1,-.4) .. (T3);
\draw (T4) .. controls +(.1,-.4) and +(-.1,-.4) .. (T5);
\draw (T7) .. controls +(.1,-.4) and +(-.1,-.4) .. (T8);
\draw (T4) -- (B1);
\draw (T2) -- (B3);
\draw (T1) -- (B2);
\draw (T8) -- (B8);
\foreach \i in {1,...,8}
{ \filldraw (T\i) circle (2pt); \filldraw (B\i) circle (2pt); }
\end{tikzpicture}}, \qquad
{\begin{tikzpicture}[scale=.55,line width=1.35pt]
\foreach \i in {1,...,8}
{ \path (\i,1) coordinate (T\i); \path (\i,0) coordinate (B\i); }
\filldraw[fill=gray!25,draw=gray!25,line width=4pt]  (T1) -- (T8) -- (B8) -- (B1) -- (T1);
\draw (T1) .. controls +(.1,-.4) and +(-.1,-.4) .. (T3);
\draw (T4) .. controls +(.1,-.4) and +(-.1,-.4) .. (T6);
\draw (T6) .. controls +(.1,-.4) and +(-.1,-.4) .. (T7);
\draw (T5) -- (B3);
\draw (T2) -- (B1);
\draw (T3) -- (B2);
\draw (T8) -- (B8);
\foreach \i in {1,...,8}
{ \filldraw (T\i) circle (2pt); \filldraw (B\i) circle (2pt); }
\end{tikzpicture}}~.
\end{center}
\end{example}
The \defn{half-$(k+1,m)$-standard diagrams} are defined to be the diagrams where if $B_j$ and $B_{j+1}$ are propagating blocks containing $\overline{j}$ and $\overline{j+1}$, respectively, then $\text{max}(B_j\cap [k])<\text{max}(B_{j+1}\cap [k])$.

\begin{example}
The five half-$(3,1)$-standard diagrams are:
\[
\scalebox{1}{{\begin{tikzpicture}[scale=.55,line width=1.35]
\foreach \i in {1,...,3}
{ \path (\i,1) coordinate (T\i); \path (\i,0) coordinate (B\i); }
\filldraw[fill=gray!25,draw=gray!25,line width=4pt]  
(T1) -- (T3) -- (B3) -- (B1) -- (T1);
\draw (T1) -- (B1);
\draw (T3) -- (B3);
\foreach \i in {1,...,3}
{ \filldraw (T\i) circle (2pt); \filldraw (B\i) circle (2pt); }
\end{tikzpicture}}, \quad
{\begin{tikzpicture}[scale=.55,line width=1.35pt]
\foreach \i in {1,...,3}
{ \path (\i,1) coordinate (T\i); \path (\i,0) coordinate (B\i); }
\filldraw[fill=gray!25,draw=gray!25,line width=4pt]  
(T1) -- (T3) -- (B3) -- (B1) -- (T1);
\draw (T2) -- (B1);
\draw (T3) -- (B3);
\foreach \i in {1,...,3}
{ \filldraw (T\i) circle (2pt); \filldraw (B\i) circle (2pt); }
\end{tikzpicture}}, \quad 
{\begin{tikzpicture}[scale=.55,line width=1.35pt]
\foreach \i in {1,...,3}
{ \path (\i,1) coordinate (T\i); \path (\i,0) coordinate (B\i); }
\filldraw[fill=gray!25,draw=gray!25,line width=4pt]  
(T1) -- (T3) -- (B3) -- (B1) -- (T1);
\draw (T1) .. controls +(.1,-.4) and +(-.1,-.4) .. (T2);
\draw (T1) -- (B1);
\draw (T3) -- (B3);
\foreach \i in {1,...,3}
{ \filldraw (T\i) circle (2pt); \filldraw (B\i) circle (2pt); }
\end{tikzpicture}}, \quad 
{\begin{tikzpicture}[scale=.55,line width=1.35pt]
\foreach \i in {1,...,3}
{ \path (\i,1) coordinate (T\i); \path (\i,0) coordinate (B\i); }
\filldraw[fill=gray!25,draw=gray!25,line width=4pt]  
(T1) -- (T3) -- (B3) -- (B1) -- (T1);
\draw (T1) .. controls +(.1,-.4) and +(-.1,-.4) .. (T3);
\draw (T2) -- (B1);
\draw (T3) -- (B3);
\foreach \i in {1,...,3}
{ \filldraw (T\i) circle (2pt); \filldraw (B\i) circle (2pt); }
\end{tikzpicture}}, \quad 
{\begin{tikzpicture}[scale=.55,line width=1.35pt]
\foreach \i in {1,...,3}
{ \path (\i,1) coordinate (T\i); \path (\i,0) coordinate (B\i); }
\filldraw[fill=gray!25,draw=gray!25,line width=4pt]  
(T1) -- (T3) -- (B3) -- (B1) -- (T1);
\draw (T2) .. controls +(.1,-.4) and +(-.1,-.4) .. (T3);
\draw (T1) -- (B1);
\draw (T3) -- (B3);
\foreach \i in {1,...,3}
{ \filldraw (T\i) circle (2pt); \filldraw (B\i) circle (2pt); }
\end{tikzpicture}}}.
\] 
\end{example}

Similarly as in Lemma \ref{V-module}, we have that every $d\in V_{\frac{1}{2}}(k+1,m)$ can be written as
\[d = d'\sigma, \] 
where $\sigma \in S_{m}$ and $d'$ is a half-$(k+1, m)$-standard diagram.
With these definitions, let $\nu\vdash m$, then as vector spaces, we have
\[ \Delta_{k+\frac{1}{2}}(\nu)\cong V_{\frac{1}{2}}(k+1,m) \otimes_{S_{m}} \mathbb{S}^\nu.\]
A linear basis for this module is 
$\mathcal{B}_{k+\frac{1}{2}} (\nu)$  consisting of elements $d\otimes T$ where $d$
is a half-$(k+1, m)$-standard diagram and  $T$ is a standard tableau of shape $\nu$.
The action of $d\in \mathsf{P}_{k+\frac{1}{2}}(x)$ is 
\begin{equation}\label{half-standard-action}
d\cdot d'\otimes T = \begin{cases} dd' \otimes T & \text{ if } p(dd') = m+1  \\
0 & \text{otherwise}\end{cases}, 
\end{equation}
Here, if $p(dd') = m+1$, then $dd'\in V_{\frac{1}{2}}(k+1, m)$ and it can be written as $dd' = d_1\tau$, where $d_1$ is half-$(k+1,m)$-standard diagram and $\tau\in S_{m}$. In this case, $dd' \otimes T = d_1 \otimes \tau \cdot T$, and as before we apply Garnir straightening algorithm to write $\tau\cdot T$ as a linear combination of standard tableaux, if needed.

\begin{example} The bases for the four representations of $\mathsf{P}_{2+\frac{1}{2}}(x)$ are the following sets:
\squaresize=14pt
\[
\left\{
\raisebox{-0.1in}{
{\begin{tikzpicture}[scale=.55,line width=1.35]
\foreach \i in {1,...,3}
{ \path (\i,1) coordinate (T\i); \path (\i,0) coordinate (B\i); }
\filldraw[fill=gray!25,draw=gray!25,line width=4pt]  
(T1) -- (T3) -- (B3) -- (B1) -- (T1);
\draw (T3) -- (B3);
\foreach \i in {1,...,3}
{ \filldraw (T\i) circle (2pt); \filldraw (B\i) circle (2pt); }
\end{tikzpicture}
\raisebox{0.09in}{$\otimes\ \emptyset~,$}}
\quad
{\begin{tikzpicture}[scale=.55,line width=1.35pt]
\foreach \i in {1,...,3}
{ \path (\i,1) coordinate (T\i); \path (\i,0) coordinate (B\i);}
\filldraw[fill=gray!25,draw=gray!25,line width=4pt]  
(T1) -- (T3) -- (B3) -- (B1) -- (T1);
\draw (T3) -- (B3);
\draw (T2) .. controls +(.1,-.4) and +(-.1,-.4) .. (T3);
\foreach \i in {1,...,3}
{ \filldraw (T\i) circle (2pt); \filldraw (B\i) circle (2pt); }
\end{tikzpicture}
\raisebox{0.09in}{$\otimes\ \emptyset~,$}}
\quad
{\begin{tikzpicture}[scale=.55,line width=1.35pt]
\foreach \i in {1,...,3}
{ \path (\i,1) coordinate (T\i); \path (\i,0) coordinate (B\i); }
\filldraw[fill=gray!25,draw=gray!25,line width=4pt]  
(T1) -- (T3) -- (B3) -- (B1) -- (T1);
\draw (T1) .. controls +(.1,-.4) and +(-.1,-.4) .. (T2);
\draw (T3) -- (B3);
\foreach \i in {1,...,3}
{ \filldraw (T\i) circle (2pt); \filldraw (B\i) circle (2pt); }
\end{tikzpicture}
\raisebox{0.09in}{$\otimes\ \emptyset~,$}}
\quad
{\begin{tikzpicture}[scale=.55,line width=1.35pt]
\foreach \i in {1,...,3}
{ \path (\i,1) coordinate (T\i); \path (\i,0) coordinate (B\i); }
\filldraw[fill=gray!25,draw=gray!25,line width=4pt]  
(T1) -- (T3) -- (B3) -- (B1) -- (T1);
\draw (T1) .. controls +(.1,-.4) and +(-.1,-.4) .. (T3);
\draw (T3) -- (B3);
\foreach \i in {1,...,3}
{ \filldraw (T\i) circle (2pt); \filldraw (B\i) circle (2pt); }
\end{tikzpicture}
\raisebox{0.09in}{$\otimes\ \emptyset~,$}}
\quad
{\begin{tikzpicture}[scale=.55,line width=1.35pt]
\foreach \i in {1,...,3}
{ \path (\i,1) coordinate (T\i); \path (\i,0) coordinate (B\i); }
\filldraw[fill=gray!25,draw=gray!25,line width=4pt]  
(T1) -- (T3) -- (B3) -- (B1) -- (T1);
\draw (T2) .. controls +(.1,-.4) and +(-.1,-.4) .. (T3);
\draw (T1) .. controls +(.1,-.4) and +(-.1,-.4) .. (T2);
\draw (T3) -- (B3);
\foreach \i in {1,...,3}
{ \filldraw (T\i) circle (2pt); \filldraw (B\i) circle (2pt); }
\end{tikzpicture}
\raisebox{0.09in}{$\otimes\ \emptyset$}}}
\right\}
\] 
\[
\left\{
\raisebox{-0.1in}{
{\begin{tikzpicture}[scale=.55,line width=1.35]
\foreach \i in {1,...,3}
{ \path (\i,1) coordinate (T\i); \path (\i,0) coordinate (B\i); }
\filldraw[fill=gray!25,draw=gray!25,line width=4pt]  
(T1) -- (T3) -- (B3) -- (B1) -- (T1);
\draw (T1) -- (B1);
\draw (T3) -- (B3);
\foreach \i in {1,...,3}
{ \filldraw (T\i) circle (2pt); \filldraw (B\i) circle (2pt); }
\end{tikzpicture}
\raisebox{0.09in}{$\otimes$}
\raisebox{0.03in}{\young{1\cr}~,}}
\ 
{\begin{tikzpicture}[scale=.55,line width=1.35pt]
\foreach \i in {1,...,3}
{ \path (\i,1) coordinate (T\i); \path (\i,0) coordinate (B\i); }
\filldraw[fill=gray!25,draw=gray!25,line width=4pt]  
(T1) -- (T3) -- (B3) -- (B1) -- (T1);
\draw (T2) -- (B1);
\draw (T3) -- (B3);
\foreach \i in {1,...,3}
{ \filldraw (T\i) circle (2pt); \filldraw (B\i) circle (2pt); }
\end{tikzpicture}
\raisebox{0.09in}{$\otimes$}
\raisebox{0.03in}{\young{1\cr}~,}}
\ 
{\begin{tikzpicture}[scale=.55,line width=1.35pt]
\foreach \i in {1,...,3}
{ \path (\i,1) coordinate (T\i); \path (\i,0) coordinate (B\i); }
\filldraw[fill=gray!25,draw=gray!25,line width=4pt]  
(T1) -- (T3) -- (B3) -- (B1) -- (T1);
\draw (T1) .. controls +(.1,-.4) and +(-.1,-.4) .. (T2);
\draw (T1) -- (B1);
\draw (T3) -- (B3);
\foreach \i in {1,...,3}
{ \filldraw (T\i) circle (2pt); \filldraw (B\i) circle (2pt); }
\end{tikzpicture}
\raisebox{0.09in}{$\otimes$}
\raisebox{0.03in}{\young{1\cr}~,}}
\ 
{\begin{tikzpicture}[scale=.55,line width=1.35pt]
\foreach \i in {1,...,3}
{ \path (\i,1) coordinate (T\i); \path (\i,0) coordinate (B\i); }
\filldraw[fill=gray!25,draw=gray!25,line width=4pt]  
(T1) -- (T3) -- (B3) -- (B1) -- (T1);
\draw (T1) .. controls +(.1,-.4) and +(-.1,-.4) .. (T3);
\draw (T2) -- (B1);
\draw (T3) -- (B3);
\foreach \i in {1,...,3}
{ \filldraw (T\i) circle (2pt); \filldraw (B\i) circle (2pt); }
\end{tikzpicture}
\raisebox{0.09in}{$\otimes$}
\raisebox{0.03in}{\young{1\cr}~,}}
\ 
{\begin{tikzpicture}[scale=.55,line width=1.35pt]
\foreach \i in {1,...,3}
{ \path (\i,1) coordinate (T\i); \path (\i,0) coordinate (B\i); }
\filldraw[fill=gray!25,draw=gray!25,line width=4pt]  
(T1) -- (T3) -- (B3) -- (B1) -- (T1);
\draw (T2) .. controls +(.1,-.4) and +(-.1,-.4) .. (T3);
\draw (T1) -- (B1);
\draw (T3) -- (B3);
\foreach \i in {1,...,3}
{ \filldraw (T\i) circle (2pt); \filldraw (B\i) circle (2pt); }
\end{tikzpicture}
\raisebox{0.09in}{$\otimes$}
\raisebox{0.03in}{\young{1\cr}}}}
\right\}
\]
\[
\left\{
\raisebox{-.1in}{
\begin{tikzpicture}[scale=.55,line width=1.35pt]
\foreach \i in {1,...,3}
{ \path (\i,1) coordinate (T\i); \path (\i,0) coordinate (B\i); }
\filldraw[fill=gray!25,draw=gray!25,line width=4pt]  
(T1) -- (T3) -- (B3) -- (B1) -- (T1);
\draw (T1) -- (B1);
\draw (T2) -- (B2);
\draw (T3) -- (B3);
\foreach \i in {1,...,3}
{ \filldraw (T\i) circle (2pt); \filldraw (B\i) circle (2pt); }
\end{tikzpicture}
\raisebox{0.09in}{$~~\otimes$}
\raisebox{0.03in}{\young{1&2\cr}}}
\right\}
\]
\[
\left\{
\raisebox{-.1in}{
\begin{tikzpicture}[scale=.55,line width=1.35pt]
\foreach \i in {1,...,3}
{ \path (\i,1) coordinate (T\i); \path (\i,0) coordinate (B\i); }
\filldraw[fill=gray!25,draw=gray!25,line width=4pt]  
(T1) -- (T3) -- (B3) -- (B1) -- (T1);
\draw (T1) -- (B1);
\draw (T2) -- (B2);
\draw (T3) -- (B3);
\foreach \i in {1,...,3}
{ \filldraw (T\i) circle (2pt); \filldraw (B\i) circle (2pt); }
\end{tikzpicture}
\raisebox{0.09in}{$~~\otimes$}
\raisebox{-0.06in}{\young{2\cr1\cr}}~}
\right\}~.
\]
Corresponding, respectively to 
$\Delta_{2+\frac{1}{2}}(\emptyset)$, $\Delta_{2+\frac{1}{2}}((1))$,
$\Delta_{2+\frac{1}{2}}((2))$, and $\Delta_{2+\frac{1}{2}}((1,1))$.
\end{example}

The elements in $\mathcal{B}_{k+\frac{1}{2}}(\nu)$ are in bijection with pairs of set valued tableaux of shape $((n-1-|\nu|, \nu), (1))$,  where $d\otimes T \in \mathcal{B}_{\frac{1}{2}}(\nu)$ corresponds to the pair constructed as follows: 
\begin{enumerate}
\item The propagating blocks of $d$ containing $\overline{1}, \ldots, \overline{m}$ are inserted in the boxes of $T$ where $j$ is replaced by $B_{\overline{j}}\cap [k]$.
\item The nonpropagating blocks in $d$ are inserted in last letter order at the end of the first row. 
\item $B_{\overline{k+1}}\cap [k]$ is inserted in the one box in the second component. 
\end{enumerate}

\begin{example}\squaresize=18pt
Correspondence between a basis element $d\otimes T\in \Delta_{8+\frac{1}{2}}((2,1))$ and a  pair of set valued
tableaux of shape $((n-3,1,1),(1))$.
\[
\scalebox{1.0}{{\begin{tikzpicture}[scale=.55,line width=1.35pt]
\foreach \i in {1,...,9}
{ \path (\i,1) coordinate (T\i); \path (\i,0) coordinate (B\i); }
\filldraw[fill=gray!25,draw=gray!25,line width=4pt]  
(T1) -- (T9) -- (B9) -- (B1) -- (T1);
\draw (T1) .. controls +(.3,-.4) and +(-.3,-.4) .. (T3);
\draw (T2) -- (B1);
\draw (T3) -- (B2);
\draw (T9) -- (B9);
\draw (T4) .. controls +(.1,-.4) and +(-.1,-.4) .. (T5);
\draw (T5) .. controls +(.1,-.4) and +(-1,-.4) .. (T9);
\draw (T6) .. controls +(.1,-.2) and +(-.1,-.2) .. (T7);
\foreach \i in {1,...,9}
{ \filldraw (T\i) circle (2pt); \filldraw (B\i) circle (2pt); }
\foreach \i in {1,...,9}
{\draw  (B\i)  node[below=0.05cm]{${\overline{\i}}$}; \draw  (T\i)  node[above=0.05cm]{${\i}$};}
\end{tikzpicture} }\hskip -.1in
\raisebox{-.05in}{\begin{tikzpicture}[scale=.55,line width=1.35pt]
\draw[thin] (0,0)--(1,0)--(1,2)--(0,2)--(0,0);
\draw[thin] (1,0)--(1,1);
\draw[thin]  (0,1)--(1,1);
\node(11) at (0.5,0.5){$1$};
\node(21) at (0.5,1.5){$2$};
\node(e) at (1,-.75){};
\node(f) at (-0.5,1){$\otimes$};
\node(g) at (4,1){$\stackrel{\rho}{\longmapsto}$};
\end{tikzpicture}}}\quad
\underbrace{\young{13\cr2\cr&\cdots&&67&8\cr}}_{n-3 \text{ boxes }}
\quad, \quad\young{459\cr}
\]
\end{example}

\begin{remark}
The Bratteli diagram of the tower
$\mathsf{P}_0(x) \hookrightarrow \mathsf{P}_1(x) \hookrightarrow \mathsf{P}_2(x) \hookrightarrow \cdots$
has in general multiple edges between the vertices since tensoring representations of the
symmetric group by the permutation module is not multiplicity free.
The introduction of the half algebras helps decompose the tensor product of the permutation
module into two operations, induction and restriction.
The Bratteli diagram for the tower
$\mathsf{P}_0(x) \hookrightarrow \mathsf{P}_{\frac{1}{2}}(x)
\subset \mathsf{P}_1(x) \hookrightarrow \mathsf{P}_{1+\frac{1}{2}}(x) \subset \cdots$
does not have multiple edges between the vertices.
The Bratteli diagram for partition algebras interlaced
with half-partition algebras is in \cite{HR}, page 893. 
\end{remark}

\section{Quasi-partition algebras and half quasi-partition algebras}\label{sec:quasipalgebra}
For $k \in \mathbb{Z}_{k > 0}$, the quasi-partition algebra $\mathsf{QP}_k(n)$ was introduced in \cite{DO}
as the centralizer algebra
$\text{End}_{S_n}((\mathbb{S}^{(n-1,1)})^{\otimes k})$,
where $\mathbb{S}^{(n-1,1)}$ is the irreducible representation of the symmetric group, $S_n$.
In this section, we give a more general definition and introduce
the half quasi-partition algebras, $\mathsf{QP}_{k+\frac{1}{2}}(x)$.

We define $\pi := \mathbf{1} - \frac{1}{x}\mathsf{p}$ and using tensor notation we define an idempotent $\pi^{\otimes k}$ in $\mathsf{P}_k(x)$ as follows
\begin{equation}\label{eq:projector}
\piok := \left(\mathbf{1}^{\otimes k} - \frac{1}{x}\mathsf{p}_1\right)
\left(\mathbf{1}^{\otimes k} - \frac{1}{x}\mathsf{p}_2\right)\cdots
\left(\mathbf{1}^{\otimes k} - \frac{1}{x}\mathsf{p}_k\right).
\end{equation}
Let $k \in \mathbb{Z}_{\geq0}$ and let $J$ be any subset of
$[k] = \{1,2,\ldots, k \}$,
we set $\mathsf{p}_\emptyset := \mathbf{1}^{\otimes k}$
and $\mathsf{p}_J: = \prod_{j\in J} \mathsf{p}_j$.  Using this notation we have
\begin{equation}\label{eq:exprpir}
\piok = \sum_{J\subseteq [k]} \frac{1}{(-x)^{|J|}} \mathsf{p}_J~.
\end{equation}

The corresponding idempotent in 
$\mathsf{P}_{k+\frac{1}{2}}(x) \subseteq \mathsf{P}_{k+1}(x)$ will 
be denoted by $\piokk := \piok \otimes \bf{1}$ to indicate that it is contained in the larger algebra. 

\begin{example} The idempotent in $\mathsf{P}_3(x)$ is
$\pi^{\otimes 3} = \mathbf{1}^{\otimes 3} - \frac{1}{x} \mathsf{p}_1 - \frac{1}{x} \mathsf{p}_2 - \frac{1}{x} \mathsf{p}_3 + \frac{1}{x^2} \mathsf{p}_1 \mathsf{p}_2 +\frac{1}{x^2} \mathsf{p}_1 \mathsf{p}_3+\frac{1}{x^2} \mathsf{p}_2 \mathsf{p}_3 - \frac{1}{x^3} \mathsf{p}_1 \mathsf{p}_2 \mathsf{p}_3$.
This element expressed using diagrams is

\begin{align*}
\pi^{\otimes 3} = &
\raisebox{-.1in}{
\begin{tikzpicture}[scale=.55, line width=1.35]
	\filldraw[fill=gray!25,draw=gray!25,line width=4pt]  (0,0) -- (0,1) -- (2,1) -- (2,0) -- (0,0);
	 \foreach \x in {0,1,2} {
	 \filldraw [black] (\x, 0) circle (2pt);
	 \filldraw [black] (\x, 1) circle (2pt);
	 }
	\draw  (0,1)--(0,0) (1,1)--(1,0) (2,1)--(2,0) ;
\end{tikzpicture}}
-\frac{1}{x}
\raisebox{-.1in}{
\begin{tikzpicture}[scale=.55, line width=1.35]
	\filldraw[fill=gray!25,draw=gray!25,line width=4pt]  (0,0) -- (0,1) -- (2,1) -- (2,0) -- (0,0);
	 \foreach \x in {0,1,2} {
	 \filldraw [black] (\x, 0) circle (2pt);
	 \filldraw [black] (\x, 1) circle (2pt);
	 }
	\draw (1,1)--(1,0) (2,1)--(2,0) ;
\end{tikzpicture}}
-\frac{1}{x}
\raisebox{-.1in}{
\begin{tikzpicture}[scale=.55, line width=1.35]
	\filldraw[fill=gray!25,draw=gray!25,line width=4pt]  (0,0) -- (0,1) -- (2,1) -- (2,0) -- (0,0);
	 \foreach \x in {0,1,2} {
	 \filldraw [black] (\x, 0) circle (2pt);
	 \filldraw [black] (\x, 1) circle (2pt);
	 }
	\draw (0,1)--(0,0) (2,1)--(2,0) ;
\end{tikzpicture}}
-\frac{1}{x}
\raisebox{-.1in}{
\begin{tikzpicture}[scale=.55, line width=1.35]
	\filldraw[fill=gray!25,draw=gray!25,line width=4pt]  (0,0) -- (0,1) -- (2,1) -- (2,0) -- (0,0);
	 \foreach \x in {0,1,2} {
	 \filldraw [black] (\x, 0) circle (2pt);
	 \filldraw [black] (\x, 1) circle (2pt);
	 }
	\draw (1,1)--(1,0) (0,1)--(0,0) ;
\end{tikzpicture}}
+\frac{1}{x^2}
\raisebox{-.1in}{
\begin{tikzpicture}[scale=.55, line width=1.35]
	\filldraw[fill=gray!25,draw=gray!25,line width=4pt]  (0,0) -- (0,1) -- (2,1) -- (2,0) -- (0,0);
	 \foreach \x in {0,1,2} {
	 \filldraw [black] (\x, 0) circle (2pt);
	 \filldraw [black] (\x, 1) circle (2pt);
	 }
	\draw (0,1)--(0,0) ;
\end{tikzpicture}}
+\frac{1}{x^2}
\raisebox{-.1in}{
\begin{tikzpicture}[scale=.55, line width=1.35]
	\filldraw[fill=gray!25,draw=gray!25,line width=4pt]  (0,0) -- (0,1) -- (2,1) -- (2,0) -- (0,0);
	 \foreach \x in {0,1,2} {
	 \filldraw [black] (\x, 0) circle (2pt);
	 \filldraw [black] (\x, 1) circle (2pt);
	 }
	\draw (1,1)--(1,0) ;
\end{tikzpicture}}\\
&+\frac{1}{x^2}
\raisebox{-.1in}{
\begin{tikzpicture}[scale=.55, line width=1.35]
	\filldraw[fill=gray!25,draw=gray!25,line width=4pt]  (0,0) -- (0,1) -- (2,1) -- (2,0) -- (0,0);
	 \foreach \x in {0,1,2} {
	 \filldraw [black] (\x, 0) circle (2pt);
	 \filldraw [black] (\x, 1) circle (2pt);
	 }
	\draw (2,1)--(2,0) ;
\end{tikzpicture}}
-\frac{1}{x^3}
\raisebox{-.1in}{
\begin{tikzpicture}[scale=.55, line width=1.35]
	\filldraw[fill=gray!25,draw=gray!25,line width=4pt]  (0,0) -- (0,1) -- (2,1) -- (2,0) -- (0,0);
	 \foreach \x in {0,1,2} {
	 \filldraw [black] (\x, 0) circle (2pt);
	 \filldraw [black] (\x, 1) circle (2pt);
	 }
\end{tikzpicture}}~.
\end{align*}

The idempotent in $\mathsf{P}_{2 + \frac{1}{2}}(x)$ is
$\pi^{\otimes 2}_3 = \mathbf{1}^{\otimes 3} - \frac{1}{x} \mathsf{p}_1 - \frac{1}{x} \mathsf{p}_2 + \frac{1}{x^2} \mathsf{p}_1\mathsf{p}_2$ and this expression in terms of diagrams is
\begin{align*}
\pi^{\otimes 2}_{3}
&=
\raisebox{-.1in}{
\begin{tikzpicture}[scale=.55, line width=1.35]
	\filldraw[fill=gray!25,draw=gray!25,line width=4pt]  (0,0) -- (0,1) -- (2,1) -- (2,0) -- (0,0);
	 \foreach \x in {0,1,2} {
	 \filldraw [black] (\x, 0) circle (2pt);
	 \filldraw [black] (\x, 1) circle (2pt);
	 }
	\draw  (0,1)--(0,0) (1,1)--(1,0) (2,1)--(2,0) ;
\end{tikzpicture}}
- \frac{1}{x}
\raisebox{-.1in}{
\begin{tikzpicture}[scale=.55, line width=1.35]
	\filldraw[fill=gray!25,draw=gray!25,line width=4pt]  (0,0) -- (0,1) -- (2,1) -- (2,0) -- (0,0);
	 \foreach \x in {0,1,2} {
	 \filldraw [black] (\x, 0) circle (2pt);
	 \filldraw [black] (\x, 1) circle (2pt);
	 }
	\draw  (1,1)--(1,0) (2,1)--(2,0) ;
\end{tikzpicture}}
- \frac{1}{x}
\raisebox{-.1in}{
\begin{tikzpicture}[scale=.55, line width=1.35]
	\filldraw[fill=gray!25,draw=gray!25,line width=4pt]  (0,0) -- (0,1) -- (2,1) -- (2,0) -- (0,0);
	 \foreach \x in {0,1,2} {
	 \filldraw [black] (\x, 0) circle (2pt);
	 \filldraw [black] (\x, 1) circle (2pt);
	 }
	\draw  (0,1)--(0,0) (2,1)--(2,0) ;
\end{tikzpicture}}
+\frac{1}{x^2}
\raisebox{-.1in}{
\begin{tikzpicture}[scale=.55, line width=1.35]
	\filldraw[fill=gray!25,draw=gray!25,line width=4pt]  (0,0) -- (0,1) -- (2,1) -- (2,0) -- (0,0);
	 \foreach \x in {0,1,2} {
	 \filldraw [black] (\x, 0) circle (2pt);
	 \filldraw [black] (\x, 1) circle (2pt);
	 }
	\draw (2,1)--(2,0) ;
\end{tikzpicture}}~.
\end{align*}
\end{example}

For $k \in {\mathbb Z}_{\geq 0}$ and any diagram $d \in \Ph_k$, we define
\[ \overline{d} = \piok d \piok.\]
And similarly, for $d \in \Ph_{k+\frac{1}{2}}$, we define
$\overline{d} = \piokk d \piokk$.

\begin{lemma}\label{lem:odzero}
For $r \in \frac{1}{2}{\mathbb Z}_{\geq 0}$ if
$d\in \Ph_r$ is a diagram with one or more singletons, then $\overline{d} = 0$.
\end{lemma}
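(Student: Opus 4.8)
The plan is to reduce the statement to the single identity $\pi_j\,d = 0$ (resp.\ $d\,\pi_j = 0$) whenever $\{j\}$ (resp.\ $\{\overline{j}\}$) is a singleton block of $d$, where $\pi_j := \mathbf{1}^{\otimes k} - \frac{1}{x}\mathsf{p}_j$ is the $j$-th factor in the product \eqref{eq:projector}. First I would record two elementary facts about the generators $\mathsf{p}_j$: since $\mathsf{p}_j^2 = x\,\mathsf{p}_j$, the element $\frac1x\mathsf{p}_j$ is idempotent with complementary idempotent $\pi_j$; and the $\mathsf{p}_j$ (hence the $\pi_j$) pairwise commute, so by \eqref{eq:projector} we have $\piok = \pi_1\pi_2\cdots\pi_k$ and $\piokk = \pi_1\pi_2\cdots\pi_k$ inside $\mathsf{P}_{k+1}(x)$ (with no factor $\pi_{k+1}$).

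The key step is a diagram computation: if $\{j\}$ is a block of $d$ with $j\in[k]$, then $\mathsf{p}_j\,d = x\,d$. To see this, stack $\mathsf{p}_j$ above $d$ and examine the middle row, i.e.\ the vertices of $d$'s top identified with the vertices of $\mathsf{p}_j$'s bottom. The middle vertex labeled $j$ is isolated on the $\mathsf{p}_j$ side by definition of $\mathsf{p}_j$, and it is also isolated on the $d$ side because $\{j\}$ is a top singleton of $d$; hence it forms a connected component consisting of a single middle vertex, which is removed at the cost of one factor $x$. Every other middle vertex $i\neq j$ connects top-$i$ straight through to the block of $d$ containing $\overline{i}$, so deleting the middle row returns exactly the diagram $d$. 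Thus $\mathsf{p}_j d = x d$, whence $\pi_j d = d - \frac1x\mathsf{p}_j d = 0$. The mirror-image computation (stacking $d$ above $\mathsf{p}_j$) gives $d\,\mathsf{p}_j = x\,d$, hence $d\,\pi_j = 0$ whenever $\{\overline{j}\}$ is a bottom singleton of $d$.

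Finally I would assemble the conclusion. If $d\in\Ph_k$ has a singleton, it is either a top singleton $\{j\}$ or a bottom singleton $\{\overline{j}\}$ for some $j\in[k]$. In the first case, using that the $\pi_i$ commute, $\piok\,d = \bigl(\prod_{i\neq j}\pi_i\bigr)\pi_j\,d = 0$; in the second case $d\,\piok = d\,\pi_j\bigl(\prod_{i\neq j}\pi_i\bigr) = 0$. Either way $\overline{d} = \piok\,d\,\piok = 0$. For $r = k+\tfrac12$, a diagram $d\in\Ph_{k+\frac12}$ has $k+1$ and $\overline{k+1}$ in a common block, so any singleton of $d$ sits at some $j\le k$ or $\overline{j}$ with $j\le k$; since $\piokk$ already contains the factor $\pi_j$, the identical argument yields $\overline{d}=\piokk\,d\,\piokk = 0$.

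The main obstacle is making the multiplication identity $\mathsf{p}_j d = x d$ airtight: being precise about the identification of the bottom vertices of $\mathsf{p}_j$ with the top vertices of $d$, and confirming that the only middle-only component created is the isolated vertex $j$ and that no other blocks of $d$ merge in the process. Everything after that is bookkeeping with commuting idempotents.
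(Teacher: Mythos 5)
Your proof is correct and follows essentially the same route as the paper: the identity $\mathsf{p}_j d = x\,d$ for a top singleton (equivalently $d = \tfrac{1}{x}\mathsf{p}_j d$, which is what the paper uses) combined with the commuting-idempotent structure of $\piok$, here packaged as $\pi_j d = 0$ rather than the paper's $\mathsf{p}_j\piok = 0$. The treatment of the half-integer case via $\piokk = \piok\otimes\mathbf{1}$ and the fact that $k+1,\o{k+1}$ are never singletons also matches the paper.
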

\begin{proof}
Observe that $\mathsf{p}_i\piok = \piok\mathsf{p}_i =
(\mathbf{1}^{\otimes k} - \frac{1}{x}\mathsf{p}_1)\cdots
(\mathsf{p}_i -\mathsf{p}_i)\cdots
(\mathbf{1}^{\otimes k} - \frac{1}{x}\mathsf{p}_{k}) = 0$,
since the $\mathsf{p}_i$s commute and $\mathsf{p}_i^2 = x \mathsf{p}_i$.
If $d$ has a singleton $\{i\}$ on the top row,
then $d = \frac{1}{x} \mathsf{p}_i d$ and if $d$ has a singleton $\{\overline{i}\}$ in the bottom row,
then $d = \frac{1}{x} d\mathsf{p}_i $.
Hence $\overline{d} =0$ if $d$ has any singletons, since $\Pa_r(x)$ is associative.

The same proof also applies to $\piokk$ since it is equal to $\piok \otimes \mathbf{1}$
and $k+1$, $\o{k+1}$ are not isolated in $\mathsf{P}_{k+\frac{1}{2}}(x)$.
\end{proof}

In the following proposition, we give an expression for the elements $\overline{d}$, where $d$ is a diagram without singletons.  For this we need some notation and definitions. 
For $r \in \frac{1}{2} \mathbb{Z}_{\geq 0}$ and $d,d' \in \Ph_r$, we say that $d'$ is
\defn{finer} than $d$ (alternatively, $d$ is \defn{coarser} than $d'$)
and use the notation $d'\leq d$ if every block of $d'$ is contained in a block of $d$.
Furthermore for a block $B \in d$,
we set $sp_{d'}(B) = \{ B \cap B' : B' \in d' \}$, the intersections of a single block of $d$ with all the blocks of $d'$. We write $d' \leq_\ast d$ if $d' \leq d$ and
$sp_{d'}(B)$ contains at most one set of size greater or equal to two for each $B \in d$. 
For every block $B$ of $d$, we define the \defn{length} of $B$ in $d'$, denoted by $\ell_{d'}(B)$,
to be the number of blocks of size $1$ in $sp_{d'}(B)$.  We note that if $d'\leq d$, then
$\ell_{d'}(B) = |B|$ if and only if all elements $i \in B$ are isolated vertices
in $d'$. 

\begin{example}\label{Ex : lenght blocks}
Let $d,d',d'' \in \Ph_{2+\frac{1}{2}}$ be the diagrams:
\[
d=
\raisebox{-.35in}{\begin{tikzpicture}[scale=.55,line width=1.35pt]
\foreach \i in {1,...,3}
{ \path (\i,1) coordinate (T\i); \path (\i,0) coordinate (B\i); }
\filldraw[fill=gray!25,draw=gray!25,line width=4pt]  (T1) -- (T3) -- (B3) -- (B1) -- (T1);
\draw (T1) -- (B1);
\draw (T1) .. controls +(.1,-.4) and +(-.1,-.4) .. (T2);
\draw (T3)--(B3);
\draw (B2) .. controls +(.1,.4) and +(-.1,.4) .. (B3);
\foreach \i in {1,...,3}
{\draw  (B\i)  node[below=0.05cm]{${\overline{\i}}$}; \draw  (T\i)  node[above=0.05cm]{${\i}$};}
\foreach \i in {1,...,3}
{ \filldraw (T\i) circle (2pt); \filldraw (B\i) circle (2pt); }
\end{tikzpicture}},
\quad\quad
d'=
\raisebox{-.35in}{\begin{tikzpicture}[scale=.55,line width=1.35pt]
\foreach \i in {1,...,3}
{ \path (\i,1) coordinate (T\i); \path (\i,0) coordinate (B\i); }
\filldraw[fill=gray!25,draw=gray!25,line width=4pt]  (T1) -- (T3) -- (B3) -- (B1) -- (T1);
\draw (T3)--(B3);
\foreach \i in {1,...,3}
{\draw  (B\i)  node[below=0.05cm]{${\overline{\i}}$}; \draw  (T\i)  node[above=0.05cm]{${\i}$};}
\foreach \i in {1,...,3}
{ \filldraw (T\i) circle (2pt); \filldraw (B\i) circle (2pt); }
\end{tikzpicture}}
\quad\hbox{ and }\quad
d''=
\raisebox{-.35in}{\begin{tikzpicture}[scale=.55,line width=1.35pt]
\foreach \i in {1,...,3}
{ \path (\i,1) coordinate (T\i); \path (\i,0) coordinate (B\i); }
\filldraw[fill=gray!25,draw=gray!25,line width=4pt]  (T1) -- (T3) -- (B3) -- (B1) -- (T1);
\draw (B1)--(T2);
\draw (T3)--(B3);
\draw (B2) .. controls +(.1,.4) and +(-.1,.4) .. (B3);
\foreach \i in {1,...,3}
{\draw  (B\i)  node[below=0.05cm]{${\overline{\i}}$}; \draw  (T\i)  node[above=0.05cm]{${\i}$};}
\foreach \i in {1,...,3}
{ \filldraw (T\i) circle (2pt); \filldraw (B\i) circle (2pt); }
\end{tikzpicture}}~.
\]
We note that
$d'\leq_\ast d$ and $d'' \leq_\ast d$.  Let $B_1 = \{1,2,\o{1}\}$ and $B_2 = \{3,\o{2},\o3\}$ so that
$d = \{ B_1, B_2 \}$, then
$\ell_{d'}(B_1)=|B_1|=3$, $\ell_{d'}(B_2)=1$, $\ell_{d''}(B_1)=1$ and $\ell_{d''}(B_2)=0$.
\end{example}

\begin{prop}\label{Prop : coeff d bar}
Let $r \in \frac{1}{2} \mathbb{Z}_{\geq0}$ and
let $d \in \Ph_r$ be a diagram without singletons, then
\begin{equation*}
\overline{d} = d + \sum_{d': d'<_\ast d} a_{d,d'}(x) d' ~, 
\end{equation*}
where
\begin{equation*}
a_{d,d'}(x) =
\prod_{B \in d}
\begin{cases}
\frac{|B|-1}{(-x)^{\ell_{d'}(B)-1}}&\hbox{ if }\ell_{d'}(B) = |B|\\
\frac{1}{(-x)^{\ell_{d'}(B)}}&\hbox{ otherwise}
\end{cases}~.
\end{equation*}
\end{prop}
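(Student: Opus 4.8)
The plan is to expand both copies of the projector by means of \eqref{eq:exprpir}, writing
\[
\overline d \;=\; \piok\, d\, \piok \;=\; \sum_{J,J'\subseteq[k]}\frac{1}{(-x)^{|J|+|J'|}}\;\mathsf{p}_J\, d\, \mathsf{p}_{J'},
\]
and to describe each summand $\mathsf{p}_J\, d\,\mathsf{p}_{J'}$ as a scalar times a single diagram. The half-integer case $r=k+\tfrac12$ needs no separate treatment: since $\piokk=\piok\otimes\mathbf{1}$ and the sums range over $J,J'\subseteq[k]$, the vertices $k+1,\overline{k+1}$ are never involved, so everything below applies verbatim (and the block of $d$ containing $k+1,\overline{k+1}$ is never entirely isolated).

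First I would record the elementary rule behind the whole computation: multiplying a diagram by $\mathsf{p}_j$ on top isolates the top vertex $j$ and removes $j$ from its block, producing a factor of $x$ precisely when $j$ was the last remaining vertex of that block; symmetrically for $\mathsf{p}_j$ on the bottom. Since $d$ has no singletons, no factor of $x$ is created when the first vertex is removed from a block; and since the $\mathsf{p}_i$ commute and diagram multiplication is associative, iterating over $J$ and $J'$ and tracking which blocks of $d$ get entirely isolated yields
\[
\mathsf{p}_J\, d\, \mathsf{p}_{J'} \;=\; x^{c(J,J')}\, d_{J,J'},
\]
where $d_{J,J'}$ is obtained from $d$ by isolating the top vertices $j\in J$ and the bottom vertices $\overline{j}$ with $j\in J'$ and keeping the rest of each block of $d$ as one block, and $c(J,J')$ counts the blocks $B$ of $d$ with $B\cap[k]\subseteq J$ and $\{\,j:\overline{j}\in B\,\}\subseteq J'$. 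In particular $d_{J,J'}\leq_\ast d$ for every pair, which already explains the shape of the claimed sum, and since the passage from $d$ to $d_{J,J'}$ is performed independently on each block of $d$, the coefficient of a fixed diagram $e\leq_\ast d$ in $\overline d$ factors as $\prod_{B\in d}a_B$, where $a_B$ is the sum of $x^{c_B}/(-x)^{s_B}$ over the local choices on $B$ that reproduce $sp_e(B)$, with $s_B$ the number of vertices of $B$ isolated by the choice and $c_B\in\{0,1\}$ indicating whether $B$ is isolated entirely.

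Then I would evaluate $a_B$ in the two cases of the statement. If $sp_e(B)$ contains a block of size $\geq 2$ --- equivalently $\ell_e(B)\le|B|-2$ --- that block must be the non-isolated residual block of $B$ in $d_{J,J'}$, so the set of isolated vertices of $B$ is forced (it is the complement of that block), with $s_B=\ell_e(B)$ and $c_B=0$; hence $a_B=1/(-x)^{\ell_e(B)}$, and taking $e=d$ this gives $a_B=1$ for every $B$, producing the term $d$. If instead $sp_e(B)$ consists only of singletons, so $\ell_e(B)=|B|$ (and $|B|\ge 2$), then $e$ restricted to $B$ arises either by isolating all of $B$ (a single choice, with $s_B=|B|$, $c_B=1$) or by isolating all but one vertex $v\in B$ and leaving $\{v\}$ as the residual block ($|B|$ choices, each with $s_B=|B|-1$, $c_B=0$); summing,
\[
a_B \;=\; \frac{x}{(-x)^{|B|}} + \frac{|B|}{(-x)^{|B|-1}} \;=\; \frac{|B|-1}{(-x)^{|B|-1}}.
\]
Multiplying over all $B\in d$ produces exactly $a_{d,e}(x)$, and separating off the $e=d$ term finishes the proof.

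The delicate point is the all-singletons case: a singleton of $sp_e(B)$ can be produced in two genuinely different ways --- by forcing that vertex into $J$ or $J'$, or by leaving it as the size-one residual block --- while the extra factor of $x$ coming from $c(J,J')$ enters only for the unique ``isolate everything'' choice, so one must check carefully that the signs and powers of $x$ collapse into $(|B|-1)/(-x)^{|B|-1}$. The block-wise factorization and the uniqueness in the first case are routine once the description of $\mathsf{p}_J\, d\, \mathsf{p}_{J'}$ is in place.
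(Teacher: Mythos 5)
Your proposal is correct and follows essentially the same route as the paper's proof: expand both copies of $\piok$ via \eqref{eq:exprpir}, analyze each $\mathsf{p}_J\, d\, \mathsf{p}_{J'}$ blockwise, and split into the case where $sp_{d'}(B)$ has a block of size at least two (unique local choice, contributing $1/(-x)^{\ell_{d'}(B)}$) versus the all-singletons case ($|B|+1$ local choices, with the extra factor of $x$ from the fully isolated block yielding $(|B|-1)/(-x)^{|B|-1}$). The only cosmetic differences are that you treat the $|B|$ leave-one-out choices as a single family rather than separating top and bottom, and that you make the half-integer case explicit where the paper leaves it implicit.
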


\begin{proof}
Let $d \in \Ph_r$ be a diagram without singletons, then by Equation \eqref{eq:exprpir},
$\overline{d}$ is a sum of terms involving diagrams
in the expression
$$\frac{1}{(-x)^{|J|+|J'|}} \mathsf{p}_J d \mathsf{p}_{J'}$$
for some subsets $J, J' \subseteq [k]$.
Let $d'$ be the diagram that occurs when we multiply $\mathsf{p}_J d \mathsf{p}_{J'}$. Multiplying $d$ on the top by $\mathsf{p}_J$ isolates all vertices in $J$ and multiplying on the bottom by $\mathsf{p}_{J'}$ isolated all vertices in $J'$ in the bottom of $d$. The resulting $d'$ is finer than $d$ and results from isolating some vertices of blocks in $d$. Hence,  for each block $B \in d$, all elements in $B$ that also are either in $J$ or
in $J'$ are singletons in $d'$.  The only set in $sp_{d'}(B)$ that may contain more than one element is $B \backslash (J \cup J')$.
Therefore by definition, $d' \leq_\ast d$.

Now fix a $d'$ such that $d' <_\ast d$.
Assume that $d'$ is the diagram of the term in the expression $\mathsf{p}_J d \mathsf{p}_{J'}$.
For each block $B$, we let $J_B = J \cap B$ (and respectively $J'_B = J' \cap B$).
We have therefore
$\mathsf{p}_J =\left(\prod_{B \in d} \mathsf{p}_{J_B}\right)$ (with a similar expression for $\mathsf{p}_{J'}$).
In determining the coefficient of $d'$ in $\o{d}$, we will calculate the contribution
to the coefficient grouped by possible pairs of sets $(J_B, J'_B)$.

For each block of $B \in d$, either $sp_{d'}(B)$ contains
a block of size greater than one or it contains only singletons.
If $sp_{d'}(B)$ contains
a block of size greater than one, then there are unique $J_B$ and $J'_B$
that will appear in every term where $d'$ is the diagram in the expression $\mathsf{p}_J d \mathsf{p}_{J'}$.
Those terms will have a contribution of $\frac{1}{(-x)^{|J_B|+|J_{B'}|}} = \frac{1}{(-x)^{\ell_{d'}(B)}}$
from the part corresponding to this block.

If $sp_{d'}(B)$ contains only singletons, then there are $|B|+1$ pairs of sets $(J_B, J_B')$
such that $J_B \subseteq B \cap [k]$ and $J' \subseteq B \cap [\o{k}]$ where
$d'$ is the diagram in the expression $\mathsf{p}_J d \mathsf{p}_{J'}$.
Those pairs of sets are: $((B \cap [k]) \backslash \{a\}, B \cap [\o{k}])$
for $a \in B\cap [k]$,
$(B \cap [k] , (B \cap [\o{k}])\backslash \{a'\})$
for $a' \in B \cap [\o{k}]$, and $(B \cap [k],B \cap [\o{k}])$.
In the first two cases, the contribution to the coefficient will be
$\frac{1}{(-x)^{|J_B|+|J_{B'}|}} = \frac{1}{(-x)^{\ell_{d'}(B)-1}}$.
In the latter case $\frac{1}{(-x)^{|J_B|+|J_{B'}|}} = \frac{1}{(-x)^{\ell_{d'}(B)}}$, however
$\mathsf{p}_J d \mathsf{p}_{J'}$ will contribute a power of $x$ due
to the diagram product that will have a disconnected component in the middle row
and hence the contribution from that term will be $\frac{x}{(-x)^{\ell_{d'}(B)}}
= \frac{-1}{(-x)^{\ell_{d'}(B)-1}}$.
The sum over all of the $|B|+1$ possible pairs $(J_B, J_B')$ will contribute
a total of $\frac{|B|-1}{(-x)^{\ell_{d'}(B)-1}}$ to the coefficient
of $d'$ in $\o{d}$.

Because the pairs $(J_B, J'_B)$ can be chosen for each block $B \in d$, we
conclude that the coefficient of $d'$ in $\o{d}$ is equal to
the expression stated in the proposition.
\end{proof}

\begin{example} For $d = \{\{1,2,\o1\},\{3,\o2,\o3\}\}$ a diagram in $\mathsf{P}_{2+\frac{1}{2}}(x)$,
we compute using Proposition \ref{Prop : coeff d bar}:
\begin{align*}
\o{d} =
\raisebox{-.1in}{
\begin{tikzpicture}[scale=.55, line width=1.35]
	\filldraw[fill=gray!25,draw=gray!25,line width=4pt]  (0,0) -- (0,1) -- (2,1) -- (2,0) -- (0,0);
	 \foreach \x in {0,1,2} {
	 \filldraw [black] (\x, 0) circle (2pt);
	 \filldraw [black] (\x, 1) circle (2pt);
	 }
	\draw (0,0)--(0,1);
	\draw (0,1) .. controls +(.1,-.4) and +(-.1,-.4) .. (1,1);
	\draw (2,0)--(2,1);
	\draw (1,0) .. controls +(.1,.4) and +(-.1,.4) .. (2,0);
\end{tikzpicture}}
&-\frac{1}{x}
\raisebox{-.1in}{
\begin{tikzpicture}[scale=.55, line width=1.35]
	\filldraw[fill=gray!25,draw=gray!25,line width=4pt]  (0,0) -- (0,1) -- (2,1) -- (2,0) -- (0,0);
	 \foreach \x in {0,1,2} {
	 \filldraw [black] (\x, 0) circle (2pt);
	 \filldraw [black] (\x, 1) circle (2pt);
	 }
	\draw (0,1) .. controls +(.1,-.4) and +(-.1,-.4) .. (1,1);
	\draw (2,0)--(2,1);
	\draw (1,0) .. controls +(.1,.4) and +(-.1,.4) .. (2,0);
\end{tikzpicture}}
-\frac{1}{x}
\raisebox{-.1in}{
\begin{tikzpicture}[scale=.55, line width=1.35]
	\filldraw[fill=gray!25,draw=gray!25,line width=4pt]  (0,0) -- (0,1) -- (2,1) -- (2,0) -- (0,0);
	 \foreach \x in {0,1,2} {
	 \filldraw [black] (\x, 0) circle (2pt);
	 \filldraw [black] (\x, 1) circle (2pt);
	 }
	\draw  (0,0)--(0,1);
	\draw (2,0)--(2,1);
	\draw (1,0) .. controls +(.1,.4) and +(-.1,.4) .. (2,0);
\end{tikzpicture}}
-\frac{1}{x}
\raisebox{-.1in}{
\begin{tikzpicture}[scale=.55, line width=1.35]
	\filldraw[fill=gray!25,draw=gray!25,line width=4pt]  (0,0) -- (0,1) -- (2,1) -- (2,0) -- (0,0);
	 \foreach \x in {0,1,2} {
	 \filldraw [black] (\x, 0) circle (2pt);
	 \filldraw [black] (\x, 1) circle (2pt);
	 }
	\draw  (1,1)--(0,0);
	\draw (2,0)--(2,1);
	\draw (1,0) .. controls +(.1,.4) and +(-.1,.4) .. (2,0);
\end{tikzpicture}}
- \frac{1}{x}
\raisebox{-.1in}{
\begin{tikzpicture}[scale=.55, line width=1.35]
	\filldraw[fill=gray!25,draw=gray!25,line width=4pt]  (0,0) -- (0,1) -- (2,1) -- (2,0) -- (0,0);
	 \foreach \x in {0,1,2} {
	 \filldraw [black] (\x, 0) circle (2pt);
	 \filldraw [black] (\x, 1) circle (2pt);
	 }
	\draw (0,0)--(0,1);
	\draw (0,1) .. controls +(.1,-.4) and +(-.1,-.4) .. (1,1);
	\draw (2,0)--(2,1);
\end{tikzpicture}}
+ \frac{1}{x^2}
\raisebox{-.1in}{
\begin{tikzpicture}[scale=.55, line width=1.35]
	\filldraw[fill=gray!25,draw=gray!25,line width=4pt]  (0,0) -- (0,1) -- (2,1) -- (2,0) -- (0,0);
	 \foreach \x in {0,1,2} {
	 \filldraw [black] (\x, 0) circle (2pt);
	 \filldraw [black] (\x, 1) circle (2pt);
	 }
	\draw (2,0)--(2,1);
	\draw (0,1) .. controls +(.1,-.4) and +(-.1,-.4) .. (1,1);
\end{tikzpicture}}\\
&+ \frac{1}{x^2}
\raisebox{-.1in}{
\begin{tikzpicture}[scale=.55, line width=1.35]
	\filldraw[fill=gray!25,draw=gray!25,line width=4pt]  (0,0) -- (0,1) -- (2,1) -- (2,0) -- (0,0);
	 \foreach \x in {0,1,2} {
	 \filldraw [black] (\x, 0) circle (2pt);
	 \filldraw [black] (\x, 1) circle (2pt);
	 }
	\draw  (0,0)--(0,1) (2,0)--(2,1);
\end{tikzpicture}}
+ \frac{1}{x^2}
\raisebox{-.1in}{
\begin{tikzpicture}[scale=.55, line width=1.35]
	\filldraw[fill=gray!25,draw=gray!25,line width=4pt]  (0,0) -- (0,1) -- (2,1) -- (2,0) -- (0,0);
	 \foreach \x in {0,1,2} {
	 \filldraw [black] (\x, 0) circle (2pt);
	 \filldraw [black] (\x, 1) circle (2pt);
	 }
	\draw  (1,1)--(0,0) (2,0)--(2,1);
\end{tikzpicture}}
+ \frac{2}{x^2}
\raisebox{-.1in}{
\begin{tikzpicture}[scale=.55, line width=1.35]
	\filldraw[fill=gray!25,draw=gray!25,line width=4pt]  (0,0) -- (0,1) -- (2,1) -- (2,0) -- (0,0);
	 \foreach \x in {0,1,2} {
	 \filldraw [black] (\x, 0) circle (2pt);
	 \filldraw [black] (\x, 1) circle (2pt);
	 }
	\draw (2,0)--(2,1);
	\draw (1,0) .. controls +(.1,.4) and +(-.1,.4) .. (2,0);
\end{tikzpicture}}
- \frac{2}{x^3}
\raisebox{-.1in}{
\begin{tikzpicture}[scale=.55, line width=1.35]
	\filldraw[fill=gray!25,draw=gray!25,line width=4pt]  (0,0) -- (0,1) -- (2,1) -- (2,0) -- (0,0);
	 \foreach \x in {0,1,2} {
	 \filldraw [black] (\x, 0) circle (2pt);
	 \filldraw [black] (\x, 1) circle (2pt);
	 }
	\draw (2,0)--(2,1);
\end{tikzpicture}}~.
\end{align*}
Referring to Example \ref{Ex : lenght blocks} the diagram corresponding to $d'$ has coefficients $\frac{2}{(-x)^2}\frac{1}{(-x)^1}$ and the coefficient of the diagram $d''$ is $\frac{1}{(-x)^1}\frac{1}{(-x)^0}$.
\end{example}

\subsection{The product in quasi-partition algebras} \label{sec:product}
For $r \in \frac{1}{2} {\mathbb Z}_{\geq 0}$, we set
\[\Dh_{r} = \{ d : d \in \Ph_{r}\hbox{ without singletons}\},\] and  define
\begin{equation}\label{eq:basisQP}
\mathsf{QP}_r(x) = \mathbb{C}(x)\text{-Span}\{ \overline{d}\, |\, d \in \Dh_r\}.
\end{equation}
If $r$ is an integer, we call $\mathsf{QP}_r(x)$ the \defn{quasi-partition algebra} and if $r$ is half an integer, the \defn{half quasi-partition algebra}.  

We now show that $\mathsf{QP}_{r}(x)$
for $r \in {\frac{1}{2}} \mathbb{Z}_{\geq 0}$ is closed under the product. 


\begin{prop}
    Let $d_1, d_2 \in \Dh_k$ and suppose $d_1d_2 = x^a d_3$ in $\mathsf{P}_k(x)$, where $a\in \mathbb{Z}_{\geq 0}$. Then, we have either $\o{d_1}\ \o{d_2} =0$ or 
    \[
    \overline{d_1}\ \overline{d_2} = \sum_{d: d\leq d_3} c_{d_1,d_2}^d(x) \overline{d},\qquad c_{d_1,d_2}^d(x)\in \mathbb{C}(x), 
    \]
where $d\leq d'$ means that $d$ is finer than or equal to $d'$, and $d\in \Dh_r$.    
\end{prop}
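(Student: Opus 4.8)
The plan is to reduce the product to a $\mathbb{C}(x)$-combination of elements of the form $\overline{e}$ by expanding the \emph{inner} copy of the idempotent. Since $\piok$ is idempotent and $\overline{d_i}=\piok d_i\piok$, we have $\overline{d_1}\,\overline{d_2}=\piok d_1\piok d_2\piok$; expanding the middle factor via Equation~\eqref{eq:exprpir} gives $\overline{d_1}\,\overline{d_2}=\sum_{J\subseteq[k]}(-x)^{-|J|}\,\piok\,d_1\mathsf{p}_J d_2\,\piok$. For each $J$ the diagram product satisfies $d_1\mathsf{p}_J d_2=x^{b_J}e_J$ for some $b_J\in\mathbb{Z}_{\geq0}$ and some $e_J\in\Ph_k$, so the corresponding summand equals $(-x)^{-|J|}x^{b_J}\,\overline{e_J}$. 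Hence $\overline{d_1}\,\overline{d_2}$ is automatically a $\mathbb{C}(x)$-combination of the $\overline{e_J}$, and everything comes down to controlling the diagrams $e_J$.

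The key step is to check that $e_J\leq d_3$ for every $J$. Recall $d_3$ is obtained from the graph got by stacking $d_1$ over $d_2$ and identifying the bottom vertices of $d_1$ with the top vertices of $d_2$; the diagram $e_J$ comes from the analogous three-layer graph with $\mathsf{p}_J$ inserted in the middle, which is \emph{the same} graph except that the edges bridging the two identified middle rows at the positions $i\in J$ have been deleted (these are precisely the vertices $\mathsf{p}_J$ isolates). Deleting edges can only refine the connected components, so restricting to the top and bottom rows shows every block of $e_J$ is contained in a block of $d_3$, i.e.\ $e_J\leq d_3$; the factor $x^{b_J}$ merely records the components living entirely in the middle rows. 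Next, any term in which $e_J$ has a singleton vanishes by Lemma~\ref{lem:odzero}, so the surviving terms have $e_J\in\Dh_k$.

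Grouping the surviving terms by the value $d:=e_J$ then gives $\overline{d_1}\,\overline{d_2}=\sum_{d\in\Dh_k,\ d\leq d_3}c_{d_1,d_2}^d(x)\,\overline{d}$ with $c_{d_1,d_2}^d(x)=\sum_{J:\,e_J=d}(-x)^{-|J|}x^{b_J}\in\mathbb{C}(x)$; if every $e_J$ carries a singleton this sum is empty and $\overline{d_1}\,\overline{d_2}=0$, which is the first alternative in the statement. The identical argument applies for half-integer index after replacing $\piok$ by $\piokk=\piok\otimes\mathbf{1}$, since $\mathsf{p}_J$ with $J\subseteq[k]$ leaves the block of $k+1$ and $\overline{k+1}$ intact. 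The only nontrivial point is the refinement claim $e_J\leq d_3$; it is the same phenomenon exploited in the proof of Proposition~\ref{Prop : coeff d bar}, namely that multiplying a diagram by a product of $\mathsf{p}_i$'s only isolates vertices and therefore only produces finer diagrams, so I do not expect a genuine obstacle.
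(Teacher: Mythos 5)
Your proposal is correct and follows essentially the same route as the paper: expand the inner idempotent via Equation \eqref{eq:exprpir}, observe that each $d_1\mathsf{p}_Jd_2$ yields (up to a power of $x$) a diagram finer than or equal to $d_3$ because inserting $\mathsf{p}_J$ only removes edges from the stacked graph, and then kill the singleton terms with Lemma \ref{lem:odzero}, with the same remark handling the half-integer case via $\piokk=\piok\otimes\mathbf{1}$. No substantive difference from the paper's argument.
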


\begin{proof}
    Suppose that $r=k\in \mathbb{Z}_{\geq 0}$. 
    The product $\o{d_1} \ \o{d_2} = \pi^{\otimes k} (d_1 \pi^{\otimes k} d_2) \pi^{\otimes k} 
    = \o{d_1\pi^{\otimes k} d_2}$, the first equality follows because $\pi^{\otimes k}$ is an idempotent and the last expression simply means that we multiply every element in the linear combination $d_1\pi^{\otimes k} d_2$ on the right and on the left by $\pi^{\otimes k}$.

     We have $d_1 \pi^{\otimes k} d_2 = \sum_{J\subseteq [k]} \frac{1}{(-x)^{|J|}}d_1\mathsf{p}_J d_2$.  Each summand $d_1\mathsf{p}_Jd_2$ can be computed by drawing a three layered graph with $d_1$ on top, followed by $\mathsf{p}_J$, and $d_2$ in the bottom.  For any $J\neq \emptyset$, we get that the three layered graph always has fewer edges than the graph of $d_1\mathsf{p}_\emptyset d_2 = d_1 d_2$ (since $\mathsf{p}_\emptyset = \mathbf{1}^{\otimes k}$, the identity in $\mathsf{P}_k(x)$). Hence after removing middle blocks we get $d_1\mathsf{p}_J d_2 = x^m d'$, where $d'$ is finer or equal to the diagram that occurs in the product $d_1d_2$, i.e. $d_3$. After combining like terms, we have  
    \[d_1\pi^{\otimes k} d_2 = \sum_{d'\leq d_3} \gamma_{d_1,d_2}^{d'}(x) d'.\]

     Now multiplying this last expression on the left and on the right by $\pi^{\otimes k}$ we get 
    \[\o{d_1} \ \o{d_2} = \o{d_1\pi^{\otimes k} d_2} = \sum_{d'\leq d_3} \gamma_{d_1,d_2}^{d'}(x) \o{d'}.\]
    It is possible for the last expression to be equal to zero in the case that all terms cancel out or when all resulting diagrams have singletons.  If a term $\gamma_{d_1,d_2}^{d'}(x)\o{d'}$ is nonzero, the diagram $d'$ has no singletons by Lemma \ref{lem:odzero}.

    For $r = k +\frac{1}{2}$ the above argument also works when we use the idempotent $\pi_{k+1}^{\otimes k}$ to show that we get finer elements in the summand; however, we have to show that we get $\o{d'}$ where $d'$ has $k+1$ and $\o{k+1}$ in the same block.  But this should be clear as in this case $d_1, d_2, \pi_{k+1}^{\otimes k}, \in \mathsf{P}_{k+\frac{1}{2}}(x)$ and  $\mathsf{P}_{k+\frac{1}{2}}(x)$ is closed under the product, therefore every resulting summand satisfies this condition.
\end{proof}

\begin{example} Consider the following example in $\mathsf{QP}_{2+\frac{1}{2}}(x)$.  Let
$$\mathsf{t}_1 = \raisebox{-.1in}{
\begin{tikzpicture}[scale=.55, line width=1.35]
	\filldraw[fill=gray!25,draw=gray!25,line width=4pt]  (0,0) -- (0,1) -- (2,1) -- (2,0) -- (0,0);
	 \foreach \x in {0,1,2} {
	 \filldraw [black] (\x, 0) circle (2pt);
	 \filldraw [black] (\x, 1) circle (2pt);
	 }
	\draw  (0,0)--(0,1) (2,0)--(2,1);
	\draw (0,1) .. controls +(.1,-.4) and +(-.1,-.4) .. (1,1);
	\draw (1,0) .. controls +(.1,.4) and +(-.1,.4) .. (2,0);
\end{tikzpicture}},
$$
then $\o{\mathsf{e}_1}~\o{\mathsf{t}_1}=0$, while
$$
\mathsf{t}_1~\mathsf{e}_1 = 
\raisebox{-.1in}{
\begin{tikzpicture}[scale=.55, line width=1.35]
	\filldraw[fill=gray!25,draw=gray!25,line width=4pt]  (0,0) -- (0,1) -- (2,1) -- (2,0) -- (0,0);
	 \foreach \x in {0,1,2} {
	 \filldraw [black] (\x, 0) circle (2pt);
	 \filldraw [black] (\x, 1) circle (2pt);
	 }
	\draw (2,1)--(2,0);
	\draw (0,1) .. controls +(.1,-.4) and +(-.1,-.4) .. (1,1);
	\draw (1,1) .. controls +(.1,-.4) and +(-.1,-.4) .. (2,1);
	\draw (0,0) .. controls +(.1,.4) and +(-.1,.4) .. (1,0);
\end{tikzpicture}}
\qquad\hbox{and}\qquad
\o{\mathsf{t}_1}~\o{\mathsf{e}_1} = 
\overline{\raisebox{-.1in}{
\begin{tikzpicture}[scale=.55, line width=1.35]
	\filldraw[fill=gray!25,draw=gray!25,line width=4pt]  (0,0) -- (0,1) -- (2,1) -- (2,0) -- (0,0);
	 \foreach \x in {0,1,2} {
	 \filldraw [black] (\x, 0) circle (2pt);
	 \filldraw [black] (\x, 1) circle (2pt);
	 }
	\draw (2,1)--(2,0);
	\draw (0,1) .. controls +(.1,-.4) and +(-.1,-.4) .. (1,1);
	\draw (1,1) .. controls +(.1,-.4) and +(-.1,-.4) .. (2,1);
	\draw (0,0) .. controls +(.1,.4) and +(-.1,.4) .. (1,0);
\end{tikzpicture}}}
- \frac{1}{x} \overline{\raisebox{-.1in}{
\begin{tikzpicture}[scale=.55, line width=1.35]
	\filldraw[fill=gray!25,draw=gray!25,line width=4pt]  (0,0) -- (0,1) -- (2,1) -- (2,0) -- (0,0);
	 \foreach \x in {0,1,2} {
	 \filldraw [black] (\x, 0) circle (2pt);
	 \filldraw [black] (\x, 1) circle (2pt);
	 }
	\draw (2,0)--(2,1);
	\draw (0,1) .. controls +(.1,-.4) and +(-.1,-.4) .. (1,1);
	\draw (0,0) .. controls +(.1,.4) and +(-.1,.4) .. (1,0);
\end{tikzpicture}}}~.$$
\end{example}

\medskip
We note that $\mathsf{QP}_k(x)$ is an algebra with unity  $\pi^{\otimes k}$ and $\mathsf{QP}_{k+\frac{1}{2}}(x)$ with unity $\pi^{\otimes k}_{k+1}$.

For any permutation $\tau$ of $[k]$, and any $J\subseteq [k]$, let $\tau(J) = \{ \tau(i)\, |\, i\in J\}$.
Any permutation $\sigma\in S_k$ is identified with a diagram in $\mathsf{P}_k(x)$ corresponding to the
set partition with blocks $\{ i, \o{\sigma(i)}\}$.  For any $\sigma \in S_k\subseteq \mathsf{P}_k(x)$
a permutation and $J\subseteq [k]$, we have 
\[ \mathsf{p}_J \sigma = \sigma \mathsf{p}_{\sigma(J)}
\quad \text{ and } \quad \piok = \sum_{J\subseteq [k]} \frac{1}{(-x)^{|\sigma(J)|}} \mathsf{p}_{\sigma(J)}.\]
From these two identities, it follows that
\begin{equation}\label{piksigma}
 \pi^{\otimes k} \sigma = \sigma \pi^{\otimes k}
 \end{equation}
Hence, $\o{\sigma} = \pi^{\otimes k} \sigma \pi^{\otimes k} = \pi^{\otimes k} \sigma$ as an element in $\mathsf{QP}_k(x)$.

\begin{prop}\label{prop:symmetricgroup}
    The elements $\o{\mathsf{s}_i}$ for $1\leq i \leq k-1$ satisfy the following relations: 
    \[ \o{\mathsf{s}_i}^2 = \pi^{\otimes k}, \quad \o{\mathsf{s}_i}\ \o{\mathsf{s}_{i+1}}\  \o{\mathsf{s}_i} = \o{\mathsf{s}_{i+1}}\ \o{\mathsf{s}_i} \ \o{\mathsf{s}_{i+1}}, \quad \text{and } \quad\o{\mathsf{s}_i} \ \o{\mathsf{s}_j}= \o{\mathsf{s}_j}\ \o{\mathsf{s}_i}~,\]
    where $|i-j|>1$. Hence, the $\o{\mathsf{s}_i}$ generate a group isomorphic to $S_k$ with identity $\pi^{\otimes k}$ in $\mathsf{QP}_k(x)$.
\end{prop}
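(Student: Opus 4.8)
The plan is to exhibit $\sigma \mapsto \o{\sigma}$ as an injective unital algebra homomorphism $\CC(x)S_k \to \mathsf{QP}_k(x)$ sending the identity permutation to $\piok$; the three displayed relations and the isomorphism statement then follow formally from the Coxeter presentation of $S_k$. The two facts that drive everything are that $\piok$ is an idempotent (each factor $\mathbf{1}^{\otimes k} - \tfrac1x \mathsf{p}_i$ squares to itself since $\mathsf{p}_i^2 = x\mathsf{p}_i$, and these factors commute) and that $\piok$ is central for the copy of $S_k$ inside $\mathsf{P}_k(x)$, which is exactly Equation~\eqref{piksigma}. Combining these with associativity gives $\o{\sigma} = \piok\sigma\piok = \piok\sigma = \sigma\piok$ for every $\sigma\in S_k$, as already observed just before the statement.

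First I would verify multiplicativity. For $\sigma,\tau\in S_k$,
\[
\o{\sigma}\,\o{\tau} = (\piok\sigma)(\piok\tau) = \piok(\sigma\piok)\tau = \piok\piok\,\sigma\tau = \piok\,\sigma\tau = \o{\sigma\tau},
\]
using~\eqref{piksigma} for the third equality and idempotency for the fourth. Extending linearly, $\sigma\mapsto\o{\sigma}$ is a unital homomorphism into $\mathsf{QP}_k(x)$ with $\o{\id}=\piok$, the unit of $\mathsf{QP}_k(x)$ noted above. The three relations are then immediate images of $\mathsf{s}_i^2=\id$, $\mathsf{s}_i\mathsf{s}_{i+1}\mathsf{s}_i=\mathsf{s}_{i+1}\mathsf{s}_i\mathsf{s}_{i+1}$ and $\mathsf{s}_i\mathsf{s}_j=\mathsf{s}_j\mathsf{s}_i$ ($|i-j|>1$) holding in $S_k\subseteq\mathsf{P}_k(x)$; for instance $\o{\mathsf{s}_i}^2=\o{\mathsf{s}_i^2}=\o{\id}=\piok$.

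Next I would prove injectivity, which upgrades ``the $\o{\mathsf{s}_i}$ satisfy the $S_k$ relations'' to ``they generate a group isomorphic to $S_k$.'' Expanding with~\eqref{eq:exprpir} and the identity $\mathsf{p}_J\sigma = \sigma\mathsf{p}_{\sigma(J)}$,
\[
\o{\sigma} = \piok\sigma = \sum_{J\subseteq[k]} \frac{1}{(-x)^{|J|}}\,\mathsf{p}_J\sigma,
\]
the term $J=\emptyset$ equals $\sigma$, while for $J\neq\emptyset$ the product $\mathsf{p}_J\sigma$ is a single partition diagram (no power of $x$ is produced, since every middle vertex stays connected to a bottom vertex through $\sigma$) having a singleton at each top vertex in $J$, hence is not a permutation diagram. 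Since distinct partition diagrams are linearly independent in $\mathsf{P}_k(x)$, in any vanishing combination $\sum_{\sigma} c_\sigma\o{\sigma}=0$ the ``permutation part'' $\sum_\sigma c_\sigma\sigma$ must vanish on its own, forcing all $c_\sigma=0$. Thus the homomorphism is injective, and since $\o{\mathsf{s}_i}^{-1}=\o{\mathsf{s}_i}$ in the unital algebra $\mathsf{QP}_k(x)$, the subgroup of units generated by $\o{\mathsf{s}_1},\dots,\o{\mathsf{s}_{k-1}}$ is the isomorphic image of $\langle\mathsf{s}_1,\dots,\mathsf{s}_{k-1}\rangle=S_k$, with identity $\piok$.

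I do not expect a substantive obstacle; the only points needing care are recalling that $\piok$ is the two-sided identity of $\mathsf{QP}_k(x)$ (established immediately before the statement) and making the triangularity argument precise, namely checking that passing $\sigma$ through $\piok$ retains the leading diagram $\sigma$ unchanged and only adds diagrams with singletons.
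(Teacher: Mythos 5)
Your proof is correct and follows essentially the same route as the paper: both arguments rest on the commutation $\piok\sigma=\sigma\piok$ of Equation \eqref{piksigma} together with idempotency of $\piok$ to get $\o{\sigma}\,\o{\tau}=\o{\sigma\tau}$, after which the three relations are inherited from $S_k\subseteq\mathsf{P}_k(x)$. The only addition is your triangularity/injectivity argument (each $\o{\sigma}$ equals $\sigma$ plus diagrams with singletons, so distinct $\o{\sigma}$ are linearly independent), which the paper's one-line proof leaves implicit but which is exactly what upgrades ``the $\o{\mathsf{s}_i}$ satisfy the Coxeter relations'' to ``they generate a group isomorphic to $S_k$'' rather than a proper quotient.
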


\begin{proof}
    By Equation \eqref{piksigma}, if $\sigma, \tau \in S_k\subset \mathsf{P}_k(x)$, then $\o{\sigma}\ \o{\tau} = \pi^{\otimes k}  \sigma\tau = \o{\sigma \tau}$. From this and the relations of the $s_i$'s in $\mathsf{P}_k(x)$. 
\end{proof}

\subsection{A tower of quasi-partition algebras}\label{subsec:quasitower}


As in the case of the half-partition algebra, for $k \in \mathbb{Z}_{\geq 0}$
there is a natural embedding of $\mathsf{QP}_k(x)$ into $\mathsf{QP}_{k+\frac{1}{2}}(x)$
by the embedding $\overline{d} \mapsto \overline{d} \otimes \mathbf{1}$.
However for $k$ an integer it is not the case that $\mathsf{QP}_{k+\frac{1}{2}}(x)$ is a subalgebra
of $\mathsf{QP}_{k+1}(x)$ (see Example \ref{ex:elementsQP}).
Instead we describe a two-step process which consists of
an inclusion followed by a projection.

For integers $k \geq 0$, and $d \in \Pa_{k+1}(x)$, we define\footnote{Note the difference between
the definition of $\tilde{d}$ and $\o{d} = \pi^{\otimes k+1} d \pi^{\otimes k+1}$.}

$$\tilde{d} = \piokk d \piokk~.$$

Now consider the subalgebra of $\Pa_{k+1}(x)$,
$$\widetilde{\mathsf{QP}}_{k+1}(x) = \mathbb{C}(x)\text{-Span}\{ \tilde{d} \, |\, d\in \Pa_{k+1}(x)\}~.$$
We note that the basis of $\widetilde{\mathsf{QP}}_{k+1}(x)$ is:
\begin{equation}\label{eq:basistildeQP}
\{ \widetilde{d} : d \in \Ph_{k+1}\hbox{ has no singletons in } [k] \cup [\o{k}] \}~.
\end{equation}
The index set are the diagrams which have no singletons in the first $k$ positions
but that may have singletons in the last position.  Equivalently it could be expressed
as neither $\{i\}$ nor $\{\o{i}\}$ is a block for $1 \leq i \leq k$.

Hence, the first step is the natural inclusion of $\mathsf{QP}_{k+\frac{1}{2}}(x)$ in
$\widetilde{\mathsf{QP}}_{k+1}(x)$.  It should be made clear that
$\widetilde{\mathsf{QP}}_{k+1}(x)$ is larger than
both $\mathsf{QP}_{k+\frac{1}{2}}(x)$ and $\mathsf{QP}_{k+1}(x)$ since, for instance,
$\piokk \mathsf{p}_{k+1} \in \widetilde{\mathsf{QP}}_{k+1}(x)$
but it is not an element of either $\mathsf{QP}_{k+\frac{1}{2}}(x)$ or $\mathsf{QP}_{k+1}(x)$.

Thus far we have introduced algebras in our tower so that for each $k \in {\mathbb{Z}}_{\geq0}$,
\begin{align}
\mathsf{QP}_k(x) &= \piok \mathsf{P}_k(x) \piok ~,\nonumber\\
\mathsf{QP}_{k+\frac{1}{2}}(x) &= \piokk \mathsf{P}_{k+\frac{1}{2}}(x) \piokk~,
\label{eq:threealgebras}\\
\widetilde{\mathsf{QP}}_{k+1}(x) &= \piokk \mathsf{P}_{k+1}(x) \piokk~.\nonumber
\end{align}

Then for the second step there is a projection from $\widetilde{\mathsf{QP}}_{k+1}(x)$
to $\mathsf{QP}_{k+1}(x)$ which, for each $d \in \Ph_{k+1}$,
$\tilde{d} \in \widetilde{\mathsf{QP}}_{k+1}(x)$
is sent to $\overline{d} =
(\mathbf{1}^{\otimes k+1} - \frac{1}{x} \mathsf{p}_{k+1})
\tilde{d}
(\mathbf{1}^{\otimes k+1} - \frac{1}{x} \mathsf{p}_{k+1})
\in \mathsf{QP}_{k+1}(x)$.

Therefore we have the following chain of inclusions and projections:
\begin{equation}\label{eq:QPtower}
\mathsf{QP}_0(x) \hookrightarrow \mathsf{QP}_{\frac{1}{2}}(x) \subseteq \widetilde{\mathsf{QP}}_{1}(x)\twoheadrightarrow
\mathsf{QP}_1(x) \hookrightarrow \mathsf{QP}_{1+\frac{1}{2}}(x) \subseteq \widetilde{\mathsf{QP}}_{2}(x)\twoheadrightarrow
\mathsf{QP}_2(x) \hookrightarrow  \cdots~.
\end{equation}

While there is an embedding of $\mathsf{QP}_{k}(x)$ in $\mathsf{QP}_{k+\frac{1}{2}}(x)$
by tensoring by $\mathbf{1}$, the
following example makes it clearer that $\mathsf{QP}_{k+\frac{1}{2}}(x)$
is not a subalgebra of $\mathsf{QP}_{k+1}(x)$.

\begin{example} \label{ex:elementsQP}
Notice that $\mathsf{QP}_{1+\frac{1}{2}}(x)$ is an algebra of dimension $2$ and is spanned by
the following two elements:

\begin{align*}
\raisebox{-.1in}{
\begin{tikzpicture}[scale=.55, line width=1.35]
	\filldraw[fill=gray!25,draw=gray!25,line width=4pt]  (0,0) -- (0,1) -- (1,1) -- (1,0) -- (0,0);
	 \foreach \x in {0,1} {
	 \filldraw [black] (\x, 0) circle (2pt);
	 \filldraw [black] (\x, 1) circle (2pt);
	 }
	\draw  (0,1)--(0,0) (1,1)--(1,0);
\end{tikzpicture}}
- \frac{1}{x}
\raisebox{-.1in}{
\begin{tikzpicture}[scale=.55, line width=1.35]
	\filldraw[fill=gray!25,draw=gray!25,line width=4pt]  (0,0) -- (0,1) -- (1,1) -- (1,0) -- (0,0);
	 \foreach \x in {0,1} {
	 \filldraw [black] (\x, 0) circle (2pt);
	 \filldraw [black] (\x, 1) circle (2pt);
	 }
	\draw  (1,1)--(1,0);
\end{tikzpicture}}~,\qquad\qquad
\raisebox{-.1in}{
\begin{tikzpicture}[scale=.55, line width=1.35]
	\filldraw[fill=gray!25,draw=gray!25,line width=4pt]  (0,0) -- (0,1) -- (1,1) -- (1,0) -- (0,0);
	 \foreach \x in {0,1} {
	 \filldraw [black] (\x, 0) circle (2pt);
	 \filldraw [black] (\x, 1) circle (2pt);
	 }
	\draw  (0,0)--(0,1) (1,0)--(1,1) ;
	\draw (0,0) .. controls +(.1,.4) and +(-.1,.4) .. (1,0);
	\draw (0,1) .. controls +(.1,-.4) and +(-.1,-.4) .. (1,1);
\end{tikzpicture}}
&-\frac{1}{x}
\raisebox{-.1in}{
\begin{tikzpicture}[scale=.55, line width=1.35]
	\filldraw[fill=gray!25,draw=gray!25,line width=4pt]  (0,0) -- (0,1) -- (1,1) -- (1,0) -- (0,0);
	 \foreach \x in {0,1} {
	 \filldraw [black] (\x, 0) circle (2pt);
	 \filldraw [black] (\x, 1) circle (2pt);
	 }
	\draw (1,0)--(1,1);
	\draw (0,1) .. controls +(.1,-.4) and +(-.1,-.4) .. (1,1);
\end{tikzpicture}}
-\frac{1}{x}
\raisebox{-.1in}{
\begin{tikzpicture}[scale=.55, line width=1.35]
	\filldraw[fill=gray!25,draw=gray!25,line width=4pt]  (0,0) -- (0,1) -- (1,1) -- (1,0) -- (0,0);
	 \foreach \x in {0,1} {
	 \filldraw [black] (\x, 0) circle (2pt);
	 \filldraw [black] (\x, 1) circle (2pt);
	 }
	\draw  (1,0)--(1,1);
	\draw (0,0) .. controls +(.1,.4) and +(-.1,.4) .. (1,0);
\end{tikzpicture}}
+\frac{1}{x^2}
\raisebox{-.1in}{
\begin{tikzpicture}[scale=.55, line width=1.35]
	\filldraw[fill=gray!25,draw=gray!25,line width=4pt]  (0,0) -- (0,1) -- (1,1) -- (1,0) -- (0,0);
	 \foreach \x in {0,1} {
	 \filldraw [black] (\x, 0) circle (2pt);
	 \filldraw [black] (\x, 1) circle (2pt);
	 }
	\draw  (1,0)--(1,1);
\end{tikzpicture}}~.
\end{align*}

A basis for the algebra $\widetilde{\mathsf{QP}}_2(x)$ contains seven elements.
Two of those elements are the same as those that are in $\mathsf{QP}_{1+\frac{1}{2}}(x)$,
then in addition there are the following five elements:

\begin{align*}
&\raisebox{-.1in}{
\begin{tikzpicture}[scale=.55, line width=1.35]
	\filldraw[fill=gray!25,draw=gray!25,line width=4pt]  (0,0) -- (0,1) -- (1,1) -- (1,0) -- (0,0);
	 \foreach \x in {0,1} {
	 \filldraw [black] (\x, 0) circle (2pt);
	 \filldraw [black] (\x, 1) circle (2pt);
	 }
	\draw  (0,0)--(0,1);
	\draw (0,1) .. controls +(.1,-.4) and +(-.1,-.4) .. (1,1);
\end{tikzpicture}}
-\frac{1}{x}
\raisebox{-.1in}{
\begin{tikzpicture}[scale=.55, line width=1.35]
	\filldraw[fill=gray!25,draw=gray!25,line width=4pt]  (0,0) -- (0,1) -- (1,1) -- (1,0) -- (0,0);
	 \foreach \x in {0,1} {
	 \filldraw [black] (\x, 0) circle (2pt);
	 \filldraw [black] (\x, 1) circle (2pt);
	 }
	\draw (0,0)--(1,1) ;
\end{tikzpicture}}
-\frac{1}{x}
\raisebox{-.1in}{
\begin{tikzpicture}[scale=.55, line width=1.35]
	\filldraw[fill=gray!25,draw=gray!25,line width=4pt]  (0,0) -- (0,1) -- (1,1) -- (1,0) -- (0,0);
	 \foreach \x in {0,1} {
	 \filldraw [black] (\x, 0) circle (2pt);
	 \filldraw [black] (\x, 1) circle (2pt);
	 }
	\draw (0,1) .. controls +(.1,-.4) and +(-.1,-.4) .. (1,1);
\end{tikzpicture}}
+\frac{1}{x^2}
\raisebox{-.1in}{
\begin{tikzpicture}[scale=.55, line width=1.35]
	\filldraw[fill=gray!25,draw=gray!25,line width=4pt]  (0,0) -- (0,1) -- (1,1) -- (1,0) -- (0,0);
	 \foreach \x in {0,1} {
	 \filldraw [black] (\x, 0) circle (2pt);
	 \filldraw [black] (\x, 1) circle (2pt);
	 }
\end{tikzpicture}}~,
\quad
&\raisebox{-.1in}{
\begin{tikzpicture}[scale=.55, line width=1.35]
	\filldraw[fill=gray!25,draw=gray!25,line width=4pt]  (0,0) -- (0,1) -- (1,1) -- (1,0) -- (0,0);
	 \foreach \x in {0,1} {
	 \filldraw [black] (\x, 0) circle (2pt);
	 \filldraw [black] (\x, 1) circle (2pt);
	 }
	\draw  (0,0)--(0,1);
	\draw (0,0) .. controls +(.1,.4) and +(-.1,.4) .. (1,0);
\end{tikzpicture}}
-\frac{1}{x}
\raisebox{-.1in}{
\begin{tikzpicture}[scale=.55, line width=1.35]
	\filldraw[fill=gray!25,draw=gray!25,line width=4pt]  (0,0) -- (0,1) -- (1,1) -- (1,0) -- (0,0);
	 \foreach \x in {0,1} {
	 \filldraw [black] (\x, 0) circle (2pt);
	 \filldraw [black] (\x, 1) circle (2pt);
	 }
	\draw (1,0)--(0,1) ;
\end{tikzpicture}}
-\frac{1}{x}
\raisebox{-.1in}{
\begin{tikzpicture}[scale=.55, line width=1.35]
	\filldraw[fill=gray!25,draw=gray!25,line width=4pt]  (0,0) -- (0,1) -- (1,1) -- (1,0) -- (0,0);
	 \foreach \x in {0,1} {
	 \filldraw [black] (\x, 0) circle (2pt);
	 \filldraw [black] (\x, 1) circle (2pt);
	 }
	\draw (0,0) .. controls +(.1,.4) and +(-.1,.4) .. (1,0);
\end{tikzpicture}}
+\frac{1}{x^2}
\raisebox{-.1in}{
\begin{tikzpicture}[scale=.55, line width=1.35]
	\filldraw[fill=gray!25,draw=gray!25,line width=4pt]  (0,0) -- (0,1) -- (1,1) -- (1,0) -- (0,0);
	 \foreach \x in {0,1} {
	 \filldraw [black] (\x, 0) circle (2pt);
	 \filldraw [black] (\x, 1) circle (2pt);
	 }
\end{tikzpicture}}~,\\
&\raisebox{-.1in}{
\begin{tikzpicture}[scale=.55, line width=1.35]
	\filldraw[fill=gray!25,draw=gray!25,line width=4pt]  (0,0) -- (0,1) -- (1,1) -- (1,0) -- (0,0);
	 \foreach \x in {0,1} {
	 \filldraw [black] (\x, 0) circle (2pt);
	 \filldraw [black] (\x, 1) circle (2pt);
	 }
	\draw (0,0) .. controls +(.1,.4) and +(-.1,.4) .. (1,0);
	\draw (0,1) .. controls +(.1,-.4) and +(-.1,-.4) .. (1,1);
\end{tikzpicture}}
-\frac{1}{x}
\raisebox{-.1in}{
\begin{tikzpicture}[scale=.55, line width=1.35]
	\filldraw[fill=gray!25,draw=gray!25,line width=4pt]  (0,0) -- (0,1) -- (1,1) -- (1,0) -- (0,0);
	 \foreach \x in {0,1} {
	 \filldraw [black] (\x, 0) circle (2pt);
	 \filldraw [black] (\x, 1) circle (2pt);
	 }
	\draw (0,0) .. controls +(.1,.4) and +(-.1,.4) .. (1,0);
\end{tikzpicture}}
-\frac{1}{x}
\raisebox{-.1in}{
\begin{tikzpicture}[scale=.55, line width=1.35]
	\filldraw[fill=gray!25,draw=gray!25,line width=4pt]  (0,0) -- (0,1) -- (1,1) -- (1,0) -- (0,0);
	 \foreach \x in {0,1} {
	 \filldraw [black] (\x, 0) circle (2pt);
	 \filldraw [black] (\x, 1) circle (2pt);
	 }
	\draw (0,1) .. controls +(.1,-.4) and +(-.1,-.4) .. (1,1);
\end{tikzpicture}}
+\frac{1}{x^2}
\raisebox{-.1in}{
\begin{tikzpicture}[scale=.55, line width=1.35]
	\filldraw[fill=gray!25,draw=gray!25,line width=4pt]  (0,0) -- (0,1) -- (1,1) -- (1,0) -- (0,0);
	 \foreach \x in {0,1} {
	 \filldraw [black] (\x, 0) circle (2pt);
	 \filldraw [black] (\x, 1) circle (2pt);
	 }
\end{tikzpicture}}~,
&\raisebox{-.1in}{
\begin{tikzpicture}[scale=.55, line width=1.35]
	\filldraw[fill=gray!25,draw=gray!25,line width=4pt]  (0,0) -- (0,1) -- (1,1) -- (1,0) -- (0,0);
	 \foreach \x in {0,1} {
	 \filldraw [black] (\x, 0) circle (2pt);
	 \filldraw [black] (\x, 1) circle (2pt);
	 }
	\draw  (0,0)--(1,1)  (1,0)--(0,1) ;
\end{tikzpicture}}
-\frac{1}{x}
\raisebox{-.1in}{
\begin{tikzpicture}[scale=.55, line width=1.35]
	\filldraw[fill=gray!25,draw=gray!25,line width=4pt]  (0,0) -- (0,1) -- (1,1) -- (1,0) -- (0,0);
	 \foreach \x in {0,1} {
	 \filldraw [black] (\x, 0) circle (2pt);
	 \filldraw [black] (\x, 1) circle (2pt);
	 }
	\draw (0,0)--(1,1) ;
\end{tikzpicture}}
-\frac{1}{x}
\raisebox{-.1in}{
\begin{tikzpicture}[scale=.55, line width=1.35]
	\filldraw[fill=gray!25,draw=gray!25,line width=4pt]  (0,0) -- (0,1) -- (1,1) -- (1,0) -- (0,0);
	 \foreach \x in {0,1} {
	 \filldraw [black] (\x, 0) circle (2pt);
	 \filldraw [black] (\x, 1) circle (2pt);
	 }
	\draw  (1,0)--(0,1) ;
\end{tikzpicture}}
+\frac{1}{x^2}
\raisebox{-.1in}{
\begin{tikzpicture}[scale=.55, line width=1.35]
	\filldraw[fill=gray!25,draw=gray!25,line width=4pt]  (0,0) -- (0,1) -- (1,1) -- (1,0) -- (0,0);
	 \foreach \x in {0,1} {
	 \filldraw [black] (\x, 0) circle (2pt);
	 \filldraw [black] (\x, 1) circle (2pt);
	 }
\end{tikzpicture}}~,
\end{align*}

\begin{equation*}
\raisebox{-.1in}{
\begin{tikzpicture}[scale=.55, line width=1.35]
	\filldraw[fill=gray!25,draw=gray!25,line width=4pt]  (0,0) -- (0,1) -- (1,1) -- (1,0) -- (0,0);
	 \foreach \x in {0,1} {
	 \filldraw [black] (\x, 0) circle (2pt);
	 \filldraw [black] (\x, 1) circle (2pt);
	 }
	\draw  (0,1)--(0,0);
\end{tikzpicture}}
- \frac{1}{x}
\raisebox{-.1in}{
\begin{tikzpicture}[scale=.55, line width=1.35]
	\filldraw[fill=gray!25,draw=gray!25,line width=4pt]  (0,0) -- (0,1) -- (1,1) -- (1,0) -- (0,0);
	 \foreach \x in {0,1} {
	 \filldraw [black] (\x, 0) circle (2pt);
	 \filldraw [black] (\x, 1) circle (2pt);
	 }
\end{tikzpicture}}~.
\end{equation*}

The algebra
$\mathsf{QP}_2(x)$ is of dimension $4$
and the following elements are the $\overline{d}\in \mathsf{QP}_2(x)$:

\begin{align*}
\raisebox{-.1in}{
\begin{tikzpicture}[scale=.55, line width=1.35]
	\filldraw[fill=gray!25,draw=gray!25,line width=4pt]  (0,0) -- (0,1) -- (1,1) -- (1,0) -- (0,0);
	 \foreach \x in {0,1} {
	 \filldraw [black] (\x, 0) circle (2pt);
	 \filldraw [black] (\x, 1) circle (2pt);
	 }
	\draw  (0,0)--(0,1) (1,0)--(1,1) ;
	\draw (0,0) .. controls +(.1,.4) and +(-.1,.4) .. (1,0);
	\draw (0,1) .. controls +(.1,-.4) and +(-.1,-.4) .. (1,1);
\end{tikzpicture}}
&-\frac{1}{x}
\raisebox{-.1in}{
\begin{tikzpicture}[scale=.55, line width=1.35]
	\filldraw[fill=gray!25,draw=gray!25,line width=4pt]  (0,0) -- (0,1) -- (1,1) -- (1,0) -- (0,0);
	 \foreach \x in {0,1} {
	 \filldraw [black] (\x, 0) circle (2pt);
	 \filldraw [black] (\x, 1) circle (2pt);
	 }
	\draw  (0,0)--(0,1);
	\draw (0,0) .. controls +(.1,.4) and +(-.1,.4) .. (1,0);
\end{tikzpicture}}
-\frac{1}{x}
\raisebox{-.1in}{
\begin{tikzpicture}[scale=.55, line width=1.35]
	\filldraw[fill=gray!25,draw=gray!25,line width=4pt]  (0,0) -- (0,1) -- (1,1) -- (1,0) -- (0,0);
	 \foreach \x in {0,1} {
	 \filldraw [black] (\x, 0) circle (2pt);
	 \filldraw [black] (\x, 1) circle (2pt);
	 }
	\draw  (0,0)--(0,1);
	\draw (0,1) .. controls +(.1,-.4) and +(-.1,-.4) .. (1,1);
\end{tikzpicture}}
-\frac{1}{x}
\raisebox{-.1in}{
\begin{tikzpicture}[scale=.55, line width=1.35]
	\filldraw[fill=gray!25,draw=gray!25,line width=4pt]  (0,0) -- (0,1) -- (1,1) -- (1,0) -- (0,0);
	 \foreach \x in {0,1} {
	 \filldraw [black] (\x, 0) circle (2pt);
	 \filldraw [black] (\x, 1) circle (2pt);
	 }
	\draw (1,0)--(1,1);
	\draw (0,1) .. controls +(.1,-.4) and +(-.1,-.4) .. (1,1);
\end{tikzpicture}}
-\frac{1}{x}
\raisebox{-.1in}{
\begin{tikzpicture}[scale=.55, line width=1.35]
	\filldraw[fill=gray!25,draw=gray!25,line width=4pt]  (0,0) -- (0,1) -- (1,1) -- (1,0) -- (0,0);
	 \foreach \x in {0,1} {
	 \filldraw [black] (\x, 0) circle (2pt);
	 \filldraw [black] (\x, 1) circle (2pt);
	 }
	\draw  (1,0)--(1,1);
	\draw (0,0) .. controls +(.1,.4) and +(-.1,.4) .. (1,0);
\end{tikzpicture}}
+\frac{1}{x^2}
\raisebox{-.1in}{
\begin{tikzpicture}[scale=.55, line width=1.35]
	\filldraw[fill=gray!25,draw=gray!25,line width=4pt]  (0,0) -- (0,1) -- (1,1) -- (1,0) -- (0,0);
	 \foreach \x in {0,1} {
	 \filldraw [black] (\x, 0) circle (2pt);
	 \filldraw [black] (\x, 1) circle (2pt);
	 }
	\draw  (0,0)--(0,1);
\end{tikzpicture}}
+\frac{1}{x^2}
\raisebox{-.1in}{
\begin{tikzpicture}[scale=.55, line width=1.35]
	\filldraw[fill=gray!25,draw=gray!25,line width=4pt]  (0,0) -- (0,1) -- (1,1) -- (1,0) -- (0,0);
	 \foreach \x in {0,1} {
	 \filldraw [black] (\x, 0) circle (2pt);
	 \filldraw [black] (\x, 1) circle (2pt);
	 }
	\draw  (1,0)--(1,1);
\end{tikzpicture}}
+\frac{1}{x^2}
\raisebox{-.1in}{
\begin{tikzpicture}[scale=.55, line width=1.35]
	\filldraw[fill=gray!25,draw=gray!25,line width=4pt]  (0,0) -- (0,1) -- (1,1) -- (1,0) -- (0,0);
	 \foreach \x in {0,1} {
	 \filldraw [black] (\x, 0) circle (2pt);
	 \filldraw [black] (\x, 1) circle (2pt);
	 }
	\draw  (1,0)--(0,1);
\end{tikzpicture}}\\
&+\frac{1}{x^2}
\raisebox{-.1in}{
\begin{tikzpicture}[scale=.55, line width=1.35]
	\filldraw[fill=gray!25,draw=gray!25,line width=4pt]  (0,0) -- (0,1) -- (1,1) -- (1,0) -- (0,0);
	 \foreach \x in {0,1} {
	 \filldraw [black] (\x, 0) circle (2pt);
	 \filldraw [black] (\x, 1) circle (2pt);
	 }
	\draw  (0,0)--(1,1);
\end{tikzpicture}}
+\frac{1}{x^2}
\raisebox{-.1in}{
\begin{tikzpicture}[scale=.55, line width=1.35]
	\filldraw[fill=gray!25,draw=gray!25,line width=4pt]  (0,0) -- (0,1) -- (1,1) -- (1,0) -- (0,0);
	 \foreach \x in {0,1} {
	 \filldraw [black] (\x, 0) circle (2pt);
	 \filldraw [black] (\x, 1) circle (2pt);
	 }
	\draw (0,0) .. controls +(.1,.4) and +(-.1,.4) .. (1,0);
\end{tikzpicture}}
+\frac{1}{x^2}
\raisebox{-.1in}{
\begin{tikzpicture}[scale=.55, line width=1.35]
	\filldraw[fill=gray!25,draw=gray!25,line width=4pt]  (0,0) -- (0,1) -- (1,1) -- (1,0) -- (0,0);
	 \foreach \x in {0,1} {
	 \filldraw [black] (\x, 0) circle (2pt);
	 \filldraw [black] (\x, 1) circle (2pt);
	 }
	\draw (0,1) .. controls +(.1,-.4) and +(-.1,-.4) .. (1,1);
\end{tikzpicture}}
-\frac{3}{x^3}
\raisebox{-.1in}{
\begin{tikzpicture}[scale=.55, line width=1.35]
	\filldraw[fill=gray!25,draw=gray!25,line width=4pt]  (0,0) -- (0,1) -- (1,1) -- (1,0) -- (0,0);
	 \foreach \x in {0,1} {
	 \filldraw [black] (\x, 0) circle (2pt);
	 \filldraw [black] (\x, 1) circle (2pt);
	 }
\end{tikzpicture}}~,
\end{align*}

\begin{align*}
\raisebox{-.1in}{
\begin{tikzpicture}[scale=.55, line width=1.35]
	\filldraw[fill=gray!25,draw=gray!25,line width=4pt]  (0,0) -- (0,1) -- (1,1) -- (1,0) -- (0,0);
	 \foreach \x in {0,1} {
	 \filldraw [black] (\x, 0) circle (2pt);
	 \filldraw [black] (\x, 1) circle (2pt);
	 }
	\draw  (0,1)--(0,0) (1,1)--(1,0);
\end{tikzpicture}}
- \frac{1}{x}
\raisebox{-.1in}{
\begin{tikzpicture}[scale=.55, line width=1.35]
	\filldraw[fill=gray!25,draw=gray!25,line width=4pt]  (0,0) -- (0,1) -- (1,1) -- (1,0) -- (0,0);
	 \foreach \x in {0,1} {
	 \filldraw [black] (\x, 0) circle (2pt);
	 \filldraw [black] (\x, 1) circle (2pt);
	 }
	\draw  (1,1)--(1,0);
\end{tikzpicture}}
- \frac{1}{x}
\raisebox{-.1in}{
\begin{tikzpicture}[scale=.55, line width=1.35]
	\filldraw[fill=gray!25,draw=gray!25,line width=4pt]  (0,0) -- (0,1) -- (1,1) -- (1,0) -- (0,0);
	 \foreach \x in {0,1} {
	 \filldraw [black] (\x, 0) circle (2pt);
	 \filldraw [black] (\x, 1) circle (2pt);
	 }
	\draw  (0,1)--(0,0);
\end{tikzpicture}}
+\frac{1}{x^2}
\raisebox{-.1in}{
\begin{tikzpicture}[scale=.55, line width=1.35]
	\filldraw[fill=gray!25,draw=gray!25,line width=4pt]  (0,0) -- (0,1) -- (1,1) -- (1,0) -- (0,0);
	 \foreach \x in {0,1} {
	 \filldraw [black] (\x, 0) circle (2pt);
	 \filldraw [black] (\x, 1) circle (2pt);
	 }
\end{tikzpicture}}~,
\qquad
\raisebox{-.1in}{
\begin{tikzpicture}[scale=.55, line width=1.35]
	\filldraw[fill=gray!25,draw=gray!25,line width=4pt]  (0,0) -- (0,1) -- (1,1) -- (1,0) -- (0,0);
	 \foreach \x in {0,1} {
	 \filldraw [black] (\x, 0) circle (2pt);
	 \filldraw [black] (\x, 1) circle (2pt);
	 }
	\draw (0,0) .. controls +(.1,.4) and +(-.1,.4) .. (1,0);
	\draw (0,1) .. controls +(.1,-.4) and +(-.1,-.4) .. (1,1);
\end{tikzpicture}}
- \frac{1}{x}
\raisebox{-.1in}{
\begin{tikzpicture}[scale=.55, line width=1.35]
	\filldraw[fill=gray!25,draw=gray!25,line width=4pt]  (0,0) -- (0,1) -- (1,1) -- (1,0) -- (0,0);
	 \foreach \x in {0,1} {
	 \filldraw [black] (\x, 0) circle (2pt);
	 \filldraw [black] (\x, 1) circle (2pt);
	 }
	\draw (0,0) .. controls +(.1,.4) and +(-.1,.4) .. (1,0);
\end{tikzpicture}}
- \frac{1}{x}
\raisebox{-.1in}{
\begin{tikzpicture}[scale=.55, line width=1.35]
	\filldraw[fill=gray!25,draw=gray!25,line width=4pt]  (0,0) -- (0,1) -- (1,1) -- (1,0) -- (0,0);
	 \foreach \x in {0,1} {
	 \filldraw [black] (\x, 0) circle (2pt);
	 \filldraw [black] (\x, 1) circle (2pt);
	 }
	\draw (0,1) .. controls +(.1,-.4) and +(-.1,-.4) .. (1,1);
\end{tikzpicture}}
+\frac{1}{x^2}
\raisebox{-.1in}{
\begin{tikzpicture}[scale=.55, line width=1.35]
	\filldraw[fill=gray!25,draw=gray!25,line width=4pt]  (0,0) -- (0,1) -- (1,1) -- (1,0) -- (0,0);
	 \foreach \x in {0,1} {
	 \filldraw [black] (\x, 0) circle (2pt);
	 \filldraw [black] (\x, 1) circle (2pt);
	 }
\end{tikzpicture}}~,
\end{align*}

\begin{align*}
\raisebox{-.1in}{
\begin{tikzpicture}[scale=.55, line width=1.35]
	\filldraw[fill=gray!25,draw=gray!25,line width=4pt]  (0,0) -- (0,1) -- (1,1) -- (1,0) -- (0,0);
	 \foreach \x in {0,1} {
	 \filldraw [black] (\x, 0) circle (2pt);
	 \filldraw [black] (\x, 1) circle (2pt);
	 }
	\draw  (0,1)--(1,0) (0,0)--(1,1);
\end{tikzpicture}}
- \frac{1}{x}
\raisebox{-.1in}{
\begin{tikzpicture}[scale=.55, line width=1.35]
	\filldraw[fill=gray!25,draw=gray!25,line width=4pt]  (0,0) -- (0,1) -- (1,1) -- (1,0) -- (0,0);
	 \foreach \x in {0,1} {
	 \filldraw [black] (\x, 0) circle (2pt);
	 \filldraw [black] (\x, 1) circle (2pt);
	 }
	\draw  (0,1)--(1,0);
\end{tikzpicture}}
- \frac{1}{x}
\raisebox{-.1in}{
\begin{tikzpicture}[scale=.55, line width=1.35]
	\filldraw[fill=gray!25,draw=gray!25,line width=4pt]  (0,0) -- (0,1) -- (1,1) -- (1,0) -- (0,0);
	 \foreach \x in {0,1} {
	 \filldraw [black] (\x, 0) circle (2pt);
	 \filldraw [black] (\x, 1) circle (2pt);
	 }
	\draw  (0,0)--(1,1);
\end{tikzpicture}}
+\frac{1}{x^2}
\raisebox{-.1in}{
\begin{tikzpicture}[scale=.55, line width=1.35]
	\filldraw[fill=gray!25,draw=gray!25,line width=4pt]  (0,0) -- (0,1) -- (1,1) -- (1,0) -- (0,0);
	 \foreach \x in {0,1} {
	 \filldraw [black] (\x, 0) circle (2pt);
	 \filldraw [black] (\x, 1) circle (2pt);
	 }
\end{tikzpicture}}~.
\end{align*}
\medskip

In displaying these elements, we see that $\mathsf{QP}_{1+\frac{1}{2}}(x)$ is not
naturally a subalgebra of $\mathsf{QP}_{2}(x)$ as it was in the case of the
partition algebra.
\end{example}

The dimensions of these algebras are determined by counting the elements
in Equations \eqref{eq:basisQP} and \eqref{eq:basistildeQP}.

The sequence of dimensions of $\dim(\mathsf{QP}_k(x))$ for $k$ an integer was shown in
\cite{DO} to be
\[ \text{dim}(\mathsf{QP}_k(x)) = \sum_{j=1}^{2k} (-1)^{j-1} B(2k-j) + 1 \]
and are every other term in the \cite{OEIS} sequence \href{https://oeis.org/A000296}{A000296}.
The dimension is equal to the number of set partitions of
$[k]\cup[\overline{k}]$ without blocks of size one.

The next corollary shows that the sequence of dimensions of
$\dim(\mathsf{QP}_{k+\frac{1}{2}}(x))$ for $k$ an integer
is $B(2k)$.
This sequence is every other term in the \cite{OEIS}
sequence \href{https://oeis.org/A000110}{A000110}.

The sequence of dimensions of $\widetilde{\mathsf{QP}}_{k+1}(x)$ is
given by \cite{OEIS} sequence \href{https://oeis.org/A207978}{A207978}.
Using the standard counting techniques of inclusion-exclusion we deduce that
$${\mathrm{dim}}(\widetilde{\mathsf{QP}}_{k+1}(x))
= \sum_{s=0}^{2k} (-1)^s \binom{2k}{s} B(2k+2-s)~.$$
The expression $\binom{2k}{s} B(2k+2-s)$ represents the number of set partitions of
$[k+1] \cup [\o{k+1}]$ where a subset of $s$ of the vertices in
$[k] \cup [\o{k}]$ are isolated
and a there is a set partition on the remaining vertices.

\begin{example}
The sequence of dimensions of the algebras for $0 \leq k \leq 6$ is given in the table below.

\begin{center}
\begin{tabular}{|c||c|c|c|c|c|c|c|}
\hline
$k$&0&1&2&3&4&5&6\\
\hline\hline
&&&&&&&\\
$\dim(\mathsf{QP}_k(x))$&1&1&4&41&715&17722&580317\\&&&&&&&\\
\hline
&&&&&&&\\
$\dim(\mathsf{QP}_{k+\frac{1}{2}}(x))$&1&2&15&203&4140&115975&4213597\\&&&&&&&\\
\hline
&&&&&&&\\
$\dim(\widetilde{\mathsf{QP}}_{k+1}(x))$&2&7&67&1080&25287&794545&31858034\\&&&&&&&\\
\hline
\end{tabular}
\end{center}
\end{example}

\begin{cor}\label{Cor : dimensions} For $k \in \mathbb{Z}_{\geq 0}$,
the set $\{\overline{d} \, |\, d\in \Dh_{k+\frac{1}{2}} \}$
is a basis for $\mathsf{QP}_{k+\frac{1}{2}}(x)$ and the
dimension of $\mathsf{QP}_{k+\frac{1}{2}}(x)$ is equal to $B(2k)$,
the number of set partitions of a set of size $2k$.
\end{cor}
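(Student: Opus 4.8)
The plan is to verify that the spanning set $\{\overline{d}:d\in\Dh_{k+\frac{1}{2}}\}$ --- which spans $\mathsf{QP}_{k+\frac{1}{2}}(x)$ by the very definition in Equation~\eqref{eq:basisQP} --- is linearly independent, and then to count $|\Dh_{k+\frac{1}{2}}|$. Independence will come from the triangularity recorded in Proposition~\ref{Prop : coeff d bar}, and the count from a short bijection.

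First I would establish linear independence. Regard the $\overline{d}$ as elements of the $\mathbb{C}(x)$-span of all partition diagrams $d\in\Ph_{k+\frac{1}{2}}$, which is a basis of $\mathsf{P}_{k+\frac{1}{2}}(x)$ over $\mathbb{C}(x)$ (including diagrams with singletons). Proposition~\ref{Prop : coeff d bar} gives, for each $d\in\Dh_{k+\frac{1}{2}}$, an expansion $\overline{d}=d+\sum_{d'<_\ast d}a_{d,d'}(x)\,d'$ in which the leading diagram $d$ has coefficient $1$ and every other $d'$ is strictly finer than $d$. Suppose $\sum_{d\in\Dh_{k+\frac{1}{2}}}c_d\,\overline{d}=0$ and choose $d_0$ maximal in the refinement order among the $d$ with $c_d\neq0$. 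A fixed diagram $d_0$ occurs in the expansion of $\overline{d}$ only when $d_0=d$ or $d_0<_\ast d$, and in the latter case $d$ is strictly coarser than $d_0$, contradicting the maximality of $d_0$; hence the coefficient of $d_0$ in $\sum_d c_d\,\overline{d}$ equals $c_{d_0}$, forcing $c_{d_0}=0$, a contradiction. Thus $\{\overline{d}:d\in\Dh_{k+\frac{1}{2}}\}$ is a basis and $\dim\mathsf{QP}_{k+\frac{1}{2}}(x)=|\Dh_{k+\frac{1}{2}}|$.

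Next I would count $\Dh_{k+\frac{1}{2}}$, namely the set partitions of $[k+1]\cup[\o{k+1}]$ with no singleton block and with $k+1$ and $\o{k+1}$ sharing a block $B_0$. I claim this set is in bijection with the set of all set partitions of $[k]\cup[\o{k}]$. In one direction, send a set partition $\pi$ of $[k]\cup[\o{k}]$ to the set partition of $[k+1]\cup[\o{k+1}]$ whose blocks are the blocks of $\pi$ of size at least $2$ together with $B_0:=\{k+1,\o{k+1}\}\cup(\text{the union of all singleton blocks of }\pi)$. In the other direction, send an element of $\Dh_{k+\frac{1}{2}}$ with special block $B_0$ to the set partition of $[k]\cup[\o{k}]$ whose singletons are exactly the elements of $B_0\setminus\{k+1,\o{k+1}\}$ and whose remaining blocks are the other blocks (all of size at least $2$). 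These maps are mutually inverse, so $|\Dh_{k+\frac{1}{2}}|=B(2k)$. As a consistency check, inclusion--exclusion on which of the $2k$ vertices of $[k]\cup[\o{k}]$ are isolated gives $|\Dh_{k+\frac{1}{2}}|=\sum_{s=0}^{2k}(-1)^s\binom{2k}{s}B(2k+1-s)$, which equals $B(2k)$ by binomial inversion of the Bell recurrence $B(m+1)=\sum_{j}\binom{m}{j}B(j)$.

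The content is mostly bookkeeping rather than anything deep: the triangularity argument is immediate once Proposition~\ref{Prop : coeff d bar} is available, and the only point requiring genuine care is confirming that the forward and backward maps of the bijection are mutually inverse --- in particular that the two defining conditions of $\Dh_{k+\frac{1}{2}}$ (``no singletons'' and ``$k+1\sim\o{k+1}$'') are exactly what makes the construction reversible. This is a routine verification.
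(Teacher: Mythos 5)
Your proposal is correct and follows essentially the same route as the paper: linear independence of the $\overline{d}$ via the triangular expansion of Proposition \ref{Prop : coeff d bar} (which you spell out in more detail than the paper does), and the same bijection between $\Ph_k$ and $\Dh_{k+\frac{1}{2}}$ obtained by absorbing all singletons into the block containing $\{k+1,\o{k+1}\}$. The inclusion--exclusion consistency check is a harmless extra.
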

\begin{proof}  We will show that $\left|\Dh_{k+\frac{1}{2}}\right| = \left|\Ph_k\right|$
by establishing a bijection between these two sets.

Fix $k \in \mathbb{Z}_{\geq 0}$, then
the dimension of $\mathsf{QP}_{k+\frac{1}{2}}(x)$ is equal to the number of elements in
$\Dh_{k+\frac{1}{2}}$ since by
Proposition \ref{Prop : coeff d bar} the $\overline{d}$
for $d \in \Dh_{k+\frac{1}{2}}$ are linearly independent.

Define $F: \Ph_k \rightarrow \Dh_{k+\frac{1}{2}}$ for any set partition $d \in \Ph_k$ by
placing all singletons in $d$ in the block containing $\{k+1,\overline{k+1}\}$, that is
\[F(d) = \{  S \in d \text{ such that } |S|>1,\} \cup
\{\{ a \, |\, \{a\} \in d\} \cup \{k+1,\overline{k+1}\}\}. \]
Since $F$ is invertible, it follows that $\left|\Ph_k\right| = \left|\Dh_{k+\frac{1}{2}}\right| = B(2k)$
and it is equal to the dimension of $\mathsf{QP}_{k+\frac{1}{2}}(x)$.
\end{proof}

\section{Irreducible representations of quasi-partition algebras}\label{sec:QPirreps}

In this section we will use the representations constructed for
$\mathsf{P}_k(x)$ and $\mathsf{P}_{k+\frac{1}{2}}(x)$ in Section \ref{sec:repsPk}
and the projections $\pi^{\otimes k}$ and $\pi_{k+1}^{\otimes k}$ to construct
the representations of $\mathsf{QP}_k(x)$.

A different approach was taken by Scrimshaw \cite[Theorem 3.2]{Scrimshaw}
where he shows that that the quasi-partition
algebra as a subalgebra of the partition algebra $\mathsf{P}_k(x-1)$
is a cellular algebra and hence is semi-simple if $\mathsf{P}_k(x-1)$.

Recall that in Section \ref{sec:repsPk} we defined standard modules for every
$\nu\vdash m$, where $0\leq m \leq k$, $\Delta_k(\nu)$ with basis
\[
\mathcal{B}_k(\nu) =
\{d \otimes T \, |\, d \text{ a } (k,m)\text{-standard diagram and }
T \text{ standard tableau of shape } \nu \}.
\]
As in Section \ref{sec:repsPk}, we will assume that
the algebra $\Pa_{k}(x)$ is semisimple \cite{MS};
that is we assume that $x \notin \{0, 1, 2, \ldots, 2k-2\}$.

Recall that $V(k,m)$ contains diagrams $d$ such that $p(d)=m$ and the only propagating vertices in the bottom row are: $\overline{1}, \ldots, \overline{m}$.

\begin{lemma}\label{lem:basisQ}
    For $d\otimes T\in \mathcal{B}_k(\nu)$, we have $\pi^{\otimes k} d \otimes T = 0$ if $d$ has singleton vertices on the top row. And if $d$ has no singleton vertex in the top row, then
    \[
  \o{d\otimes T} := \pi^{\otimes k} d\otimes T = d\otimes T + \sum_{d'\otimes T' \in \mathcal{B}_k(\nu)} a_{d'}(x) d' \otimes T' ~, 
  \]
  where $a_{d'}(x) \in \mathbb{C}(x)$ for some $0\leq i \leq k$. In addition,  $d'$ has singletons in the first row.
\end{lemma}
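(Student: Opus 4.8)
The plan is to analyze the action of $\pi^{\otimes k}$ on a basis element $d \otimes T$ of $\Delta_k(\nu)$ directly, using the expansion $\pi^{\otimes k} = \sum_{J \subseteq [k]} \frac{1}{(-x)^{|J|}} \mathsf{p}_J$ from Equation \eqref{eq:exprpir} and the definition of the standard action in Equation \eqref{standard-action}. First I would handle the vanishing claim: if $d$ has a singleton $\{i\}$ on the top row, then $d = \frac1x \mathsf{p}_i d$ (as noted in the proof of Lemma \ref{lem:odzero}), so $d$ factors through $\mathsf{p}_i$ on the left; since $\mathsf{p}_i \pi^{\otimes k} = 0$ and $\pi^{\otimes k}$ is idempotent, we get $\pi^{\otimes k} d = \pi^{\otimes k} \pi^{\otimes k} d = \pi^{\otimes k}(\frac1x \mathsf{p}_i)\pi^{\otimes k} d \cdot(\ldots)$ — more cleanly, observe $\pi^{\otimes k} d \otimes T = \pi^{\otimes k} (\frac1x \mathsf{p}_i d) \otimes T$, and since $\mathsf{p}_i$ has propagation number $k$ but the composite $\mathsf{p}_i d$ has a singleton that reduces... actually the cleanest route: $\pi^{\otimes k} \mathsf{p}_i = 0$ in $\mathsf{P}_k(x)$, and we can move $\mathsf{p}_i$ to act as $\pi^{\otimes k}(\mathsf{p}_i d)\otimes T$; but we need $\mathsf{p}_i$ adjacent to $\pi^{\otimes k}$. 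Since $d$ having singleton $\{i\}$ means $d = \mathsf{p}_i d'$ for some $d'$ up to a scalar of $x$ (here $d' $ differs by removing the factor, precisely $d = \frac{1}{x}\mathsf{p}_i d$), we have $\pi^{\otimes k} d \otimes T = \frac{1}{x}(\pi^{\otimes k}\mathsf{p}_i) d \otimes T = 0$.

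For the second claim, I would expand $\pi^{\otimes k} d \otimes T = \sum_{J\subseteq[k]} \frac{1}{(-x)^{|J|}} \mathsf{p}_J d \otimes T$. The $J = \emptyset$ term contributes $d \otimes T$ itself. For nonempty $J$, the diagram $\mathsf{p}_J d$ isolates each top vertex in $J$: when we compute $\mathsf{p}_J d = x^b d''$ for some diagram $d''$ and $b \geq 0$, every vertex $j \in J$ becomes a singleton on the top row of $d''$ (isolating a top vertex cannot connect it to anything). I must check the propagation number: by Equation \eqref{standard-action}, the term is zero unless $p(\mathsf{p}_J d) = m$. When it is nonzero, $\mathsf{p}_J d = x^b d''$ where $d''\in V(k,m)$ still has the correct barred propagating structure (the bottom vertices $\overline{1},\ldots,\overline{m}$ are untouched by $\mathsf{p}_J$, which only acts on the top), so by Lemma \ref{V-module} we write $d'' = d' \tau$ with $d'$ a $(k,m)$-standard diagram and $\tau \in S_m$, and then $d'' \otimes T = d' \otimes \tau\cdot T$, which after Garnir straightening is a linear combination of basis elements $d' \otimes T'$. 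Crucially, $d'$ inherits singletons on the top row from $d''$ (isolating and then reordering the barred bottom vertices via right multiplication by $\tau$ does not affect the top-row singletons), so every such $d'$ has at least one singleton in the first row. Collecting terms and absorbing the powers of $x$ and the rational coefficients into $a_{d'}(x)$ gives the stated formula.

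The main obstacle I anticipate is bookkeeping around the propagation number and the interaction between $\mathsf{p}_J$ (acting on top vertices) and the $S_m$-reduction from Lemma \ref{V-module} (acting on bottom vertices). I need to argue carefully that $d'$ really has a top-row singleton whenever the term survives: the point is that $\mathsf{p}_J$ commutes past the bottom structure in the sense that isolating top vertex $j\in J$ produces a singleton $\{j\}$ in $\mathsf{p}_J d$, and neither the removal of middle-row components (which only kills blocks entirely in the middle) nor the right action of $\tau\in S_m$ (which permutes $\overline 1,\ldots,\overline m$) can un-isolate vertex $j$. A secondary subtlety: I should confirm that the surviving diagrams $d'$ genuinely lie in $V(k,m)$ with the right structure on the barred side — but this is immediate since $\mathsf{p}_J$ touches only unbarred vertices, so the blocks $B_{\overline 1},\ldots,B_{\overline m}$ of $d$ either survive (possibly with some unbarred elements stripped off into singletons) or the propagation number drops below $m$ and the term is zero by \eqref{standard-action}. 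Once these points are nailed down, the remaining manipulations are routine.
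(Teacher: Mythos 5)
Your proposal is correct and follows essentially the same route as the paper's proof: the vanishing case uses the identity $d = \tfrac{1}{x}\mathsf{p}_i d$ together with $\pi^{\otimes k}\mathsf{p}_i = 0$ (the argument inside Lemma \ref{lem:odzero}), and the expansion case uses Equation \eqref{eq:exprpir}, the action rule \eqref{standard-action}, and Lemma \ref{V-module} to rewrite each surviving term $\mathsf{p}_J d$ (whose top-row vertices in $J$ are singletons) as a $(k,m)$-standard diagram times an element of $S_m$. Your extra care that the top-row singletons survive both the removal of middle components and the $S_m$ straightening is exactly the point the paper leaves implicit, so no gap.
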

\begin{proof}
If $d$ has isolated vertices on the top row then $\pi^{\otimes k} \cdot d =0$ by Lemma \ref{lem:odzero}. 
If $d$ has no singleton vertices in the first row, we have that
since $\pi^{\otimes k}\in \mathsf{P}_k(x)$ and $\Delta_k(\nu)$ is a $\mathsf{P}_k(x)$-module,
the $\pi^{\otimes k} \cdot d \otimes T$ is a linear combination of elements in
$\mathcal{B}_k(\nu)$. The terms of $\pi^{\otimes k} \cdot d \otimes T$
are of obtained from $\frac{1}{(-x)^J}\mathsf{p}_J d \otimes T$, where
$\mathsf{p}_J d \in V(k,m)$ or equal to zero if the propagation number of
the resulting diagram $d'$ is less than $m$. If the diagram $d'$ in the
product of $\mathsf{p}_J d$ has propagation number $m$, then
$d'= d_1 \sigma$ for some $\sigma\in S_m$ and $d_1$ has the same
top as $d'$ and is a $(k,m)$-standard diagram.
\end{proof}

\begin{example} Let $k=3$, we give an example of a nonzero element in $\pi^{\otimes k} d\otimes T$ in Lemma \ref{lem:basisQ}: 
\squaresize=14pt
\[
\overline{
\raisebox{-0.1in}{
\begin{tikzpicture}[scale=.55,line width=1.35pt]
\foreach \i in {1,...,3}
{ \path (\i,1) coordinate (T\i); \path (\i,0) coordinate (B\i); }
\filldraw[fill=gray!25,draw=gray!25,line width=4pt]  
(T1) -- (T3) -- (B3) -- (B1) -- (T1);
\draw (T2) .. controls +(.1,-.4) and +(-.1,-.4) .. (T3);
\draw (T1) -- (B1);
\foreach \i in {1,...,3}
{ \filldraw (T\i) circle (2pt); \filldraw (B\i) circle (2pt); }
\end{tikzpicture}}
\otimes
\raisebox{-0.05in}{$\young{1\cr}$}}
=
\raisebox{-0.1in}{
\begin{tikzpicture}[scale=.55,line width=1.35pt]
\foreach \i in {1,...,3}
{ \path (\i,1) coordinate (T\i); \path (\i,0) coordinate (B\i); }
\filldraw[fill=gray!25,draw=gray!25,line width=4pt]  
(T1) -- (T3) -- (B3) -- (B1) -- (T1);
\draw (T2) .. controls +(.1,-.4) and +(-.1,-.4) .. (T3);
\draw (T1) -- (B1);
\foreach \i in {1,...,3}
{ \filldraw (T\i) circle (2pt); \filldraw (B\i) circle (2pt); }
\end{tikzpicture}}
\otimes
\raisebox{-0.05in}{$\young{1\cr}$}
- \frac{1}{x}
\raisebox{-0.1in}{
\begin{tikzpicture}[scale=.55,line width=1.35pt]
\foreach \i in {1,...,3}
{ \path (\i,1) coordinate (T\i); \path (\i,0) coordinate (B\i); }
\filldraw[fill=gray!25,draw=gray!25,line width=4pt]  
(T1) -- (T3) -- (B3) -- (B1) -- (T1);
\draw (T1) -- (B1);
\foreach \i in {1,...,3}
{ \filldraw (T\i) circle (2pt); \filldraw (B\i) circle (2pt); }
\end{tikzpicture}}
\otimes
\raisebox{-0.05in}{$\young{1\cr}$}
\]
\end{example}

We define $\mathcal{Q}_k(\nu)$ to be the set of nonzero $\pi^{\otimes k} d\otimes T$, for $d\otimes T \in \mathcal{B}_k(\nu)$. 

\begin{prop} Let $k\in \mathbb{Z}_{\geq 0}$, for each $\nu\vdash m$ such that $0\leq m \leq k$ the set $\mathcal{Q}_k(\nu)$ is linearly independent. And its size is the number of $[k]$-set tableaux of shape $(n-|\nu|,\nu)$ such that no sets of size one are in the first row of the tableau. 
\end{prop}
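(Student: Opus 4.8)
The plan is to exploit the triangularity already established in Lemma~\ref{lem:basisQ}. Recall that for $d\otimes T\in\mathcal B_k(\nu)$ with no singleton on the top row, $\pi^{\otimes k}d\otimes T = d\otimes T + \sum_{d'\otimes T'} a_{d'}(x)\,d'\otimes T'$ where every $d'$ occurring has at least one singleton in its first row; and if $d$ \emph{does} have a top-row singleton, $\pi^{\otimes k}d\otimes T=0$. So $\mathcal Q_k(\nu)$ is indexed precisely by those $d\otimes T\in\mathcal B_k(\nu)$ for which $d$ has no singleton on the top row. First I would observe that the set $\mathcal B_k(\nu)$ is a basis of $\Delta_k(\nu)$, and that it splits as a disjoint union $\mathcal B_k(\nu) = \mathcal B_k^{\circ}(\nu)\sqcup \mathcal B_k^{\bullet}(\nu)$, where $\mathcal B_k^{\circ}(\nu)$ consists of the $d\otimes T$ with no top-row singleton and $\mathcal B_k^{\bullet}(\nu)$ of those with at least one. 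Order $\mathcal B_k(\nu)$ by any linear refinement of the partial order ``number of top-row singletons of $d$,'' so that elements of $\mathcal B_k^{\circ}(\nu)$ come strictly before all elements of $\mathcal B_k^{\bullet}(\nu)$ (the coarsening relation increases the number of isolated vertices, so refinements $d'$ appearing in the sum always have strictly more singletons than $d$). With respect to this order, the matrix expressing $\{\pi^{\otimes k}d\otimes T : d\otimes T\in\mathcal B_k^{\circ}(\nu)\}$ in the basis $\mathcal B_k(\nu)$ is ``unitriangular'': each $\pi^{\otimes k}d\otimes T$ has coefficient $1$ on $d\otimes T$ and nonzero coefficients only on strictly later basis elements. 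Hence $\mathcal Q_k(\nu)$ is linearly independent and $|\mathcal Q_k(\nu)| = |\mathcal B_k^{\circ}(\nu)|$.

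Next I would compute $|\mathcal B_k^{\circ}(\nu)|$ via the bijection $\rho$ of Definition~\ref{def:bijectionrho}, which identifies $\mathcal B_k(\nu)$ with $\mathcal T_k((n-|\nu|,\nu))$, the set of $[k]$-set valued tableaux of shape $(n-|\nu|,\nu)$. Under $\rho$, the non-propagating blocks in the top of $d$ are exactly the sets placed in the non-empty boxes of the first row of the tableau (in last-letter order), while the propagating blocks become the contents of rows $2,\dots,\ell$. A top-row singleton of $d$ — a block $\{a\}$ with $a\in[k]$ — is precisely a non-propagating block of size $1$, hence corresponds under $\rho$ to a box in the first row of the tableau containing a singleton set. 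Therefore $d$ has no top-row singleton if and only if $\rho(d\otimes T)$ has no set of size one in its first row. This gives a bijection between $\mathcal B_k^{\circ}(\nu)$ and the set of $[k]$-set valued tableaux of shape $(n-|\nu|,\nu)$ with no size-one set in the first row, which is exactly the count claimed.

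The main obstacle — and it is a small one — is making the triangularity argument airtight: I need to confirm that in Lemma~\ref{lem:basisQ} every diagram $d'$ occurring in $\pi^{\otimes k}d\otimes T$ (with $d$ having no top-row singleton) genuinely has \emph{strictly more} top-row singletons than $d$, so that the ordering is strict and no diagonal entry other than the leading one interferes. This follows because each term comes from $\mathsf p_J d$ with $J\neq\emptyset$, and multiplying by $\mathsf p_j$ isolates the top vertex $j$: if $j$ was in a non-propagating block it becomes a new top-row singleton, and if it was in a propagating block the resulting diagram either drops below propagation number $m$ (so the term is zero) or also acquires an extra top singleton. One should also note $\pi^{\otimes k}d\otimes T$ stays inside $\Delta_k(\nu)$ since $\pi^{\otimes k}\in\mathsf P_k(x)$, which is immediate. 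With these points checked, the linear independence and the enumeration both follow, completing the proof.
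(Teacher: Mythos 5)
Your proposal is correct and takes essentially the same approach as the paper: linear independence comes from the triangularity in Lemma~\ref{lem:basisQ} (each nonzero $\pi^{\otimes k}d\otimes T$ equals $d\otimes T$ plus terms whose diagrams have first-row singletons, hence lie outside $\mathcal B_k^{\circ}(\nu)$), and the count comes from the bijection $\rho$, under which top-row singletons of $d$ correspond exactly to size-one sets in the first row of the tableau. The ordering/unitriangularity details you add are simply a more explicit write-up of what the paper's two-line proof leaves implicit.
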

\begin{proof}
    The linear independence follows from Lemma \ref{lem:basisQ}. 
    The leading elements in each nonzero $\pi^{\otimes k} d\otimes T$
    is in bijection with the tableaux described in the statement of the Lemma using the bijection, $\rho$, from Section \ref{sec:repsPk}.
\end{proof}

Define $\mathsf{QP}_k^\nu$ to be the $\mathbb{C}(x)$-Span of the
elements in $\mathcal{Q}_k(\nu)$ for every $\nu\vdash m$ and $0\leq m \leq k$. 

\begin{prop}\label{prop:simpleQP} Let $k\in\mathbb{Z}_{\geq 0}$. For every $\nu\vdash m$ and $0\leq m \leq k$, 
    $\mathsf{QP}_k^\nu$ is a simple module of $\mathsf{QP}_k(x)$. 
\end{prop}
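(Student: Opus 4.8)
The goal is to show that $\mathsf{QP}_k^\nu = \pi^{\otimes k}\Delta_k(\nu)$ is a simple $\mathsf{QP}_k(x)$-module, where $\mathsf{QP}_k(x) = \pi^{\otimes k}\mathsf{P}_k(x)\pi^{\otimes k}$. The natural tool here is the general idempotent-truncation (or ``corner algebra'') principle: if $A$ is a finite-dimensional algebra, $e\in A$ an idempotent, and $M$ a simple $A$-module, then $eM$ is either zero or a simple $eAe$-module, and moreover every simple $eAe$-module arises this way from a unique (up to iso) simple $A$-module with $eM\neq 0$. So the plan is: first invoke this principle with $A = \mathsf{P}_k(x)$, $e = \pi^{\otimes k}$, and $M = \Delta_k(\nu)$, which we know is simple since we are assuming $\mathsf{P}_k(x)$ is semisimple; second, identify $\pi^{\otimes k}\Delta_k(\nu)$ with the span $\mathcal{Q}_k(\nu)$ of the nonzero elements $\pi^{\otimes k}d\otimes T$ — this is immediate from the definition of $\mathcal{Q}_k(\nu)$ and the fact that $\mathcal B_k(\nu)$ spans $\Delta_k(\nu)$; third, conclude simplicity once we check $\pi^{\otimes k}\Delta_k(\nu)\neq 0$.

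\textbf{Key steps in order.} First I would recall (or prove in a line, since it is standard — see e.g.\ Green's \emph{Polynomial Representations} or Curtis--Reiner) the lemma: for an idempotent $e$ in a finite-dimensional algebra $A$ and a simple $A$-module $M$, $eM$ is zero or simple over $eAe$. The one-line argument: any $eAe$-submodule $N\subseteq eM$ generates $AN$, which by simplicity of $M$ is either $0$ or $M$; applying $e$ gives $eAN = eN = N$ equal to $0$ or $eM$ (using $eN = N$ since $N\subseteq eM$). Second, observe $\mathsf{QP}_k^\nu = \pi^{\otimes k}\Delta_k(\nu)$ as $\mathsf{QP}_k(x)$-modules: by Lemma~\ref{lem:basisQ}, $\pi^{\otimes k}d\otimes T$ is zero when $d$ has a top singleton and otherwise lies in $\mathrm{Span}\,\mathcal B_k(\nu)$, so $\mathcal Q_k(\nu)$ spans $\pi^{\otimes k}\Delta_k(\nu)$; and the $\mathsf{QP}_k(x)$-action on this space is exactly the restriction of the $\mathsf{P}_k(x)$-action composed with $\pi^{\otimes k}$ on both sides, i.e.\ the $eAe$-action. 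Third, and this is the only real content beyond invoking the general principle, show $\pi^{\otimes k}\Delta_k(\nu)\neq 0$. The cleanest route: note that the image of a $(k,m)$-standard diagram with \emph{no top singletons} is not killed, and by Lemma~\ref{lem:basisQ} the leading term of $\pi^{\otimes k}d\otimes T$ (in the partial order where $d'$ finer than $d$, or equivalently ``more singletons'') is $d\otimes T$ itself with coefficient $1$; take for instance the diagram $d$ with propagating blocks $\{1,\overline1\},\dots,\{m,\overline m\}$ and the remaining top vertices $m+1,\dots,k$ all grouped into a single non-propagating block (so there are no singletons), together with any standard tableau $T$ of shape $\nu$ — then $\pi^{\otimes k}d\otimes T$ has leading term $d\otimes T\neq 0$. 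Hence $\pi^{\otimes k}\Delta_k(\nu)\neq 0$, so by Step~1 it is a simple $\mathsf{QP}_k(x)$-module, which is $\mathsf{QP}_k^\nu$ by Step~2.

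\textbf{Anticipated obstacle.} The main obstacle is not the abstract idempotent-truncation argument, which is routine, but rather being careful about which $(k,m)$-diagrams give nonzero images — i.e.\ making the nonvanishing $\pi^{\otimes k}\Delta_k(\nu)\neq 0$ airtight. The triangularity supplied by Lemma~\ref{lem:basisQ} (the expansion $\pi^{\otimes k}d\otimes T = d\otimes T + \sum a_{d'}(x)\,d'\otimes T'$ with all $d'$ having first-row singletons, hence strictly ``finer'' in a well-founded order) is precisely what guarantees the leading term survives, so as long as we exhibit one valid singleton-free $(k,m)$-standard diagram we are done; when $k\geq m$ this is always possible, and when $k=m$ one takes $d = \mathrm{id}$, $T$ any standard tableau. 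One should also state explicitly that the $\mathsf{QP}_k(x)$-module structure on $\pi^{\otimes k}\Delta_k(\nu)$ is the one making it a $\pi^{\otimes k}\mathsf{P}_k(x)\pi^{\otimes k}$-module in the evident way, so that the abstract lemma applies verbatim. No deeper difficulty is expected; the semisimplicity hypothesis on $\mathsf{P}_k(x)$ (hence on $\mathsf{QP}_k(x)$, by Scrimshaw's cellularity result cited just above) is what lets us speak of ``the'' simple modules and ensures the construction exhausts them, which will be used in the subsequent completeness statement rather than here.
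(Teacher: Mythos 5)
Your argument is, at its core, the same as the paper's: the paper does not cite the corner-algebra lemma, but its proof is exactly that lemma carried out by hand in this special case — given a nonzero $\pi^{\otimes k} d\otimes T$, simplicity of $\Delta_k(\nu)$ over $\mathsf{P}_k(x)$ supplies $f\in\mathsf{P}_k(x)$ with $f\cdot \pi^{\otimes k} d\otimes T = d'\otimes T'$, and then $\overline{f}=\pi^{\otimes k}f\pi^{\otimes k}$ sends the given element to $\overline{d'\otimes T'}$, so every nonzero vector generates $\mathsf{QP}_k^\nu$. Invoking the general statement that $eM$ is zero or simple over $eAe$ is a legitimate shortcut and gives a cleaner write-up, but it is not a different route.

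The one place you go beyond the paper is the nonvanishing check $\pi^{\otimes k}\Delta_k(\nu)\neq 0$, and there your details have two problems. First, your witness diagram (propagating blocks $\{i,\overline{i}\}$ for $i\le m$ and all of $m+1,\dots,k$ in a single non-propagating block) has a top singleton when $k=m+1$, so by Lemma \ref{lem:basisQ} it is killed by $\pi^{\otimes k}$; the fix is to absorb the extra top vertex into a propagating block, e.g.\ use $\{m,m+1,\overline{m}\}$. Second, the claim that a singleton-free choice is always possible when $k\ge m$ fails for $(k,m)=(1,0)$: the only $(1,0)$-diagram is $\{\{1\},\{\overline{1}\}\}$, so $\pi^{\otimes 1}\Delta_1(\emptyset)=0$, i.e.\ $\mathsf{QP}_1^{\emptyset}=0$, exactly as recorded in Example \ref{ex:pikM}. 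So in that boundary case nonvanishing genuinely fails, the statement must be read as ``zero or simple,'' and that weaker conclusion is precisely what the corner lemma gives — and also all that the paper's own proof establishes, since ``every nonzero element generates'' is vacuous for the zero module. Away from this boundary case your proof is correct.
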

\begin{proof}
It is straightforward to show that $\mathsf{QP}_k^\nu$ is a module of $\mathsf{QP}_k(\nu)$.
Every basis element in $\mathsf{QP}_k^\nu$ is of the form $\pi^{\otimes k} d'\otimes T$
for some $d'$ without singletons in the first row.  Thus, if $\overline{d}$
is a basis element in $\mathsf{QP}_k(x)$, then
\begin{eqnarray*}
\o{d} \cdot \o{d'\otimes T} & = \pi^{\otimes k} d\pi^{\otimes k} \pi^{\otimes k} d' \otimes T \\
& = \pi^{\otimes k} d \pi^{\otimes k} d' \otimes T \\
& = \pi^{\otimes k} (d \cdot \o{d'\otimes T})
\end{eqnarray*}
Notice that $d\cdot \o{d'\otimes T}$ is a linear combination of elements in $\mathcal{B}_k(\nu)$; therefore, $\pi^{\otimes k} (d \cdot \o{d'\otimes T})$ is a linear combination of elements in $\mathcal{Q}_k(\nu)$.

To show that $\mathsf{QP}_k^\nu$ is simple $\mathsf{QP}_k(x)$-module, we will show that it is generated by every nonzero element.
We know that the standard module, $\Delta_k(\nu)$, is a simple $\mathsf{P}_k(x)$-module, then it is generated by every nonzero element.
If $\pi^{\otimes k} \cdot d\otimes T\neq 0$, then it generates  $\Delta_k(\nu)$ as a $\mathsf{P}_k(x)$-module.
Therefore, for every $d'\otimes T' \in \mathcal{B}_k(\nu)$ such that $d'$ has no
singletons in its top row, there must be an $f\in \mathsf{P}_k(x)$ such that
\[ f\cdot \pi^{\otimes k} d \otimes T = d'\otimes T'~,\]
Hence the element  $\o{f} = \pi^{\otimes k} f \pi^{\otimes k}$ has the property that
\[ \overline{f}\cdot \pi^{\otimes k} d\otimes T = \o{d'\otimes T'}\in \mathcal{Q}_k(\nu).\]

Since $\mathsf{QP}_k^\nu$ is the span of the elements $\o{d'\otimes T'}$, by linearity we can generate all its elements by acting on the nonzero element $\pi^{\otimes k} d\otimes T$, and more generally on any nonzero linear combination of these elements. 
\end{proof}

Recall that $\Delta_k(\nu) \cong V(k,m)\otimes_{S_m} \mathbb{S}^\nu$.  Let $\pi^{\otimes k} V(k,m) = \{ \pi^{\otimes k} d\, |\, d\in \Dh_k\}$, where in $\pi^{\otimes k}d$ we set $\mathsf{p}_J d = 0$, whenever $p(\mathsf{p}_Jd) < m$ due to the action of $\mathsf{P}_k(x)$ on $V(k,m)$. Therefore, we have that 
\[ \mathsf{QP}_k^\nu \cong \pi^{\otimes k} V(k,m) \otimes_{S_m} \mathbb{S}^\nu ~,\]
where $\mathbb{S}^\nu$ are the simple modules of $\mathbb{S}^m$.

\begin{theorem} Let $k\in \mathbb{Z}_{\geq 0}$,
the set $\{ \mathsf{QP}_k^\nu\, |\, \nu \vdash m \text{ where } 0 \leq m \leq k\}$ forms a complete set of mutually non-isomorphic simple modules for $\mathsf{QP}_k(x)$.
\end{theorem}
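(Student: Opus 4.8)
The plan is to deduce the theorem from the general theory of idempotent-truncated algebras, applied to the decomposition $\mathsf{QP}_k(x) = \piok \mathsf{P}_k(x) \piok$ from Equation \eqref{eq:threealgebras}. The key abstract fact is: if $A$ is a finite-dimensional algebra and $e \in A$ is an idempotent, then $M \mapsto eM$ is an exact functor from $A$-modules to $eAe$-modules that sends a simple module $S$ either to a simple $eAe$-module $eS$ (when $eS \neq 0$) or to $0$ (when $eS = 0$), and moreover every simple $eAe$-module arises as $eS$ for a unique (up to iso) simple $A$-module $S$ with $eS \neq 0$. First I would invoke this with $A = \mathsf{P}_k(x)$ (which we are assuming semisimple, so in particular all its simple modules are the $\Delta_k(\nu)$ for $\nu \in \mathsf{Par}_{\leq k}$) and $e = \piok$. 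Then $\piok \Delta_k(\nu) = \mathsf{QP}_k^\nu$ by the definition of $\mathcal{Q}_k(\nu)$ and the identification $\mathsf{QP}_k^\nu \cong \piok V(k,m) \otimes_{S_m} \mathbb{S}^\nu$ established just before the statement. By Proposition \ref{prop:simpleQP} each nonzero $\mathsf{QP}_k^\nu$ is simple, so the abstract fact guarantees that $\{\mathsf{QP}_k^\nu : \piok\Delta_k(\nu) \neq 0\}$ is a complete set of pairwise non-isomorphic simple $\mathsf{QP}_k(x)$-modules.

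It then remains to check two concrete points. First, that $\piok \Delta_k(\nu) \neq 0$ for \emph{every} $\nu \vdash m$ with $0 \leq m \leq k$, so that no shapes are lost — equivalently, that $\mathcal{Q}_k(\nu)$ is nonempty. This is immediate: take the $(k,m)$-standard diagram $d$ whose propagating blocks are the singleton-free pairs $\{j, \overline{j}\}$ for $1 \le j \le m$ together with $\{m+1,\dots,k\}$ as one nonpropagating block on top and $\overline{m+1},\dots,\overline{k}$ as singletons on the bottom; wait — more carefully, I want a $(k,m)$-standard diagram with no singletons in the top row, which exists as long as we may group the top vertices $m+1,\dots,k$ into the propagating blocks or into a block of size $\ge 2$, and then by Lemma \ref{lem:basisQ} the element $\piok d \otimes T$ is nonzero for any standard tableau $T$ of shape $\nu$. (When $k = 0$ or $m = k$ one checks the degenerate cases directly.) Second, that the number of these simple modules matches the number of simple $\mathsf{QP}_k(x)$-modules, but this is automatic from the abstract statement once non-vanishing and pairwise non-isomorphism are known, since $\mathsf{QP}_k(x)$ is a direct summand algebra of the semisimple algebra $\mathsf{P}_k(x)$, hence semisimple, hence has exactly as many simple modules as the abstract correspondence produces.

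The pairwise non-isomorphism of the nonzero $\mathsf{QP}_k^\nu$ also follows from the abstract fact (distinct simple $A$-modules with $eS \neq 0$ give distinct simple $eAe$-modules), but if one prefers a self-contained argument, one can instead observe that $\mathsf{QP}_k^\nu$ determines $\nu$: the module $\mathsf{QP}_k^\nu = \piok V(k,m) \otimes_{S_m} \mathbb{S}^\nu$ carries a filtration by propagation number (restricting the filtration $\mathbf{J}_\bullet$ of $\mathsf{P}_k(x)$), and the bottom layer recovers $\mathbb{S}^\nu$ as an $S_m$-module; distinct $(m,\nu)$ therefore give non-isomorphic modules.

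The main obstacle I anticipate is making the abstract idempotent-truncation correspondence fully rigorous in this setting: one must confirm that $\mathsf{QP}_k(x) = \piok\mathsf{P}_k(x)\piok$ is genuinely of the form $eAe$ with $A$ semisimple (which is exactly Equation \eqref{eq:threealgebras} together with the semisimplicity hypothesis on $\mathsf{P}_k(x)$), and that the module $\mathsf{QP}_k^\nu$ as constructed via the basis $\mathcal{Q}_k(\nu)$ coincides with $\piok\Delta_k(\nu)$ rather than some subquotient — this identification is essentially Lemma \ref{lem:basisQ} and the discussion preceding the theorem, but it should be stated cleanly. Everything else is bookkeeping: counting the $\nu$'s and invoking Proposition \ref{prop:simpleQP}.
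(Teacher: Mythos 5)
Your route is genuinely different from the paper's and, on the completeness question, actually more explicit. The paper's proof takes simplicity from Proposition \ref{prop:simpleQP} and then distinguishes $\mathsf{QP}_k^\nu$ from $\mathsf{QP}_k^\mu$ by restricting to the copy of $S_k$ generated by the $\o{\mathsf{s}_i}$ (Proposition \ref{prop:symmetricgroup}): acting by $\piok\sigma$ on $\piok d\otimes T$ produces $\piok d'\otimes \tau T$ with $\tau\in S_m$, so the underlying $S_m$-structure $\mathbb{S}^\nu$ is recovered and different $\nu$ give non-isomorphic modules; the exhaustiveness of the list is left implicit. You instead invoke the corner-algebra (idempotent truncation) correspondence for $e=\piok$, $A=\mathsf{P}_k(x)$ semisimple: $S\mapsto eS$ sends simples to simples or zero and is a bijection onto the simples of $eAe$, and $\piok\Delta_k(\nu)=\mathsf{QP}_k^\nu$ by Lemma \ref{lem:basisQ} and the definition of $\mathcal{Q}_k(\nu)$. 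That buys simplicity, pairwise non-isomorphism, and completeness in one stroke, at the cost of having to verify nonvanishing of each $\piok\Delta_k(\nu)$; the paper's argument is more hands-on and self-contained but silent on why no further simple modules exist.

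One concrete correction: your claim that $\piok\Delta_k(\nu)\neq 0$ for \emph{every} $\nu\vdash m$ with $0\le m\le k$ is false in exactly one case, and your hedge about ``$k=0$ or $m=k$'' misses it. The obstruction is a $(k,m)$-standard diagram with no top-row singletons, i.e.\ a partition of the $k$ top vertices into $m$ nonempty propagating tops plus non-propagating blocks of size at least $2$; this exists for all $m\ge 1$ (absorb leftover top vertices into a propagating block, which also repairs your first construction when $k-m=1$) and for $m=0$ whenever $k\neq 1$, but fails for $k=1$, $\nu=\emptyset$: indeed the paper's Example \ref{ex:pikM} records $\mathsf{QP}_1^{\emptyset}=0$, consistent with $\dim\mathsf{QP}_1(x)=1$. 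So in your framework the correct statement is that the nonzero $\mathsf{QP}_k^\nu$ form the complete set, with $\nu=\emptyset$ discarded when $k=1$ (the theorem as printed shares this blemish). With that adjustment your argument is sound.
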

\begin{proof}
We have already shown in Proposition \ref{prop:simpleQP} that the $\mathsf{QP}_k^\nu$ are simple modules.  By Proposition \ref{prop:symmetricgroup}, $\mathsf{QP}_k(x)$ contains a group isomorphic to $S_k$.  For an element $\pi^{\otimes k}\sigma$ and $\pi^{\otimes k} d \otimes T \in \pi^{\otimes k} V(k,m)\otimes_{S_m} \mathbb{S}^\nu$ we have 

\[\pi^{\otimes k} \sigma \cdot \pi^{\otimes k} d \otimes T = \pi^{\otimes k} \sigma d \otimes T = \pi^{\otimes k} d' \otimes \tau T~, \]
where $\tau \in S_m$ by Lemma \ref{V-module}. Hence if $\mu\neq \nu$, then the group generated by $\o{\mathsf{s}_i}$ act differently on $\mathsf{QP}_k^\nu$ than on $\mathsf{QP}_k^\mu$.  Therefore, these modules are not isomorphic.
\end{proof}

We note that using the bijection $\rho$ from Section \ref{sec:repsPk}
and setting $x=n\in \mathbb{Z}_{\geq 0}$ with $n\geq 2k$, we can give
an equivalent description of simple $\mathsf{QP}_k(n)$-modules using
tableaux.
Extending $\rho$ linearly
we map $\rho(\o{d\otimes T})$ to linear combination of tableaux.
In the following example we do this for $k=1$ and $k=2$.

\begin{example}\label{ex:pikM} For $k=1$, there are two simple  modules of $\mathsf{P}_1(n)$

\[ \mathsf{P}_1^{\emptyset} = \mathbb{C}\text{-Span}\left\{  \begin{array}{c}
        \scriptsize
        \young{   & & \dots & & 1\cr }
    \end{array} \right\} \qquad 
    \mathsf{P}_1^{(1)} = \mathbb{C}\text{-Span}\left\{  \begin{array}{c}
        \scriptsize
        \young{ 1 \cr  & & \dots & \cr }
    \end{array} \right\}
    \]
Then the corresponding  $\mathsf{QP}_1(n)$ simple modules are: 
\[ \mathsf{QP}_1^{\emptyset} = 0  \qquad
\mathsf{QP}_1^{(1)} = \mathbb{C}\text{-Span}\left\{  \begin{array}{c}
        \scriptsize
        \young{ 1 \cr  & & \dots & \cr }
    \end{array} \right\}
    \]         
For $k=2$:  The four simple modules for $\mathsf{P}_2(n)$ are:

\[ \mathsf{P}_2^{\emptyset} = \mathbb{C}\text{-Span}\left\{  \begin{array}{c}
        \scriptsize
        \young{   & & \dots & & 1 2\cr }
    \end{array},  \begin{array}{c}
        \scriptsize
        \young{   & & \dots & & 1& 2\cr }
    \end{array} \right\}
\]
\[    \mathsf{P}_2^{(1)} = \mathbb{C}\text{-Span}\left\{  \begin{array}{c}
        \scriptsize
        \young{ 12 \cr  & & \dots & \cr }
    \end{array},   \begin{array}{c}
        \scriptsize
        \young{ 1 \cr  & & \dots & 2 \cr }
    \end{array} ,  \begin{array}{c}
        \scriptsize
        \young{ 2 \cr  & & \dots & 1\cr }
    \end{array}   \right\}
    \]
and 
\[ \mathsf{P}_2^{(2)} = \mathbb{C}\text{-Span}\left\{  \begin{array}{c}
        \scriptsize
        \young{  1 & 2 \cr  & & \dots & & \cr }
    \end{array} \right\} \qquad 
    \mathsf{P}_2^{(1,1)} = \mathbb{C}\text{-Span}\left\{  \begin{array}{c}
        \scriptsize
        \young{ 2\cr 1 \cr    & & \dots & \cr }
    \end{array} \right\} 
    \]
The corresponding four simple modules for $\mathsf{QP}_2(n)$:
\[\mathsf{QP}_2^{\emptyset} = \mathbb{C}\text{-Span}\left\{  \begin{array}{c}
        \scriptsize
        \young{   & & \dots & & 1 2\cr }
    \end{array} - \frac{1}{n} \begin{array}{c}
        \scriptsize
        \young{   & & \dots & & 1 &2\cr }
    \end{array} \right\} 
\]    
\[\mathsf{QP}_2^{(1)} = \mathbb{C}\text{-Span}\left\{  \begin{array}{c}
        \scriptsize
        \young{ 12 \cr  & & \dots & \cr }
    \end{array} - \frac{1}{n}  \begin{array}{c}
        \scriptsize
        \young{ 1 \cr  & & \dots & 2 \cr }
    \end{array}  -\frac{1}{n}   \begin{array}{c}
        \scriptsize
        \young{ 2 \cr  & & \dots & 1 \cr }
    \end{array} \right\} 
    \]
 \[\mathsf{QP}_2^{(2)} = \mathbb{C}\text{-Span}\left\{  \begin{array}{c}
        \scriptsize
        \young{  1 & 2 \cr  & & \dots & & \cr }
    \end{array} \right\} \qquad 
 \mathsf{QP}_2^{(1,1)} = \mathbb{C}\text{-Span}\left\{  \begin{array}{c}
        \scriptsize
        \young{ 2\cr 1 \cr    & & \dots & \cr }
    \end{array} \right\} 
    \]
\end{example}


It is possible similarly construct the simple modules of $\mathsf{QP}_{k+\frac{1}{2}}(x)$
following the details in Section \ref{sec:halfPkreps} about the representations of the half-partition
algebra. However, we do not carry out these constructions here because as we will see in the next
section, just before Theorem \ref{th:QPktildecentralizer}, $\mathsf{QP}_{k+\frac{1}{2}}(n)$ is
isomorphic to $\mathsf{P}_k(n-1)$. Hence the representation theory of these
algebras is isomorphic.

\section{Quasi-partition algebras as centralizers}\label{sec:quasicentralizers}

Daugherty and the first author \cite[Section 2.4]{DO} started with the definition that
\begin{equation}\label{eq:DOresult}
\mathsf{QP}_k(n) \cong {\mathrm{End}}_{S_n}\!\left( \left(\mathbb{S}^{(n-1,1)}_{S_n}\right)^{\otimes k} \right)
\end{equation}
where $\mathbb{S}^{(n-1,1)}_{S_n}$ is an irreducible symmetric group module indexed by
the partition $(n-1,1)$.  In contrast, in this paper we start
with the generic definition of the quasi-partition algebra as a subalgebra of the
partition algebra and deduce the relationship as centralizer algebras
at specializations of the parameter $x$. The objective of this section
is to describe the centralizer algebras that correspond to the diagram algebras
$\mathsf{QP}_k(x)$, $\mathsf{QP}_{k+\frac{1}{2}}(x)$ and
$\widetilde{\mathsf{QP}}_{k+1}(x)$ for values of $x$ equal to a sufficiently large integer.

 We will use this relationship to give a combinatorial description of how the
 representation theory of these algebras are related. This will be described
 in a diagram similar to a Bratteli diagram.  Counting paths in this diagram
 will allow us to compute the dimensions of the simple modules of these algebras.

Let $V_n = \mathbb{C}{\text{-Span}}\{ v_1, v_2,\ldots, v_n\}$, then it is well known
that $\mathbb{S}^{(n-1,1)}_{S_n} \cong \mathbb{C}{\text{-Span}}\{ v_1 - v_n, v_2 - v_n,\ldots, v_{n-1}-v_n\}$
and $\mathbb{S}^{(n)}_{S_n} \cong \mathbb{C}{\text{-Span}}\{ v_1+ v_2+\cdots+ v_n\}$ and that
$V_n \cong \mathbb{S}^{(n-1,1)}_{S_n} \oplus \mathbb{S}^{(n)}_{S_n}$ as an $S_n$-module.
The action of $\mathsf{P}_k(n)$ on the $V_n^{\otimes k}$ is defined in \cite{HR} and we follow similar
notation to develop the action of the quasi-partition algebras here.

For a diagram $d$, and integers $i_r, i_{\o{r}} \in [n]$ for $1 \leq r \leq k$, define the coefficient
\begin{equation}
\delta(d)^{(i_1, i_2, \ldots, i_k)}_{(i_{\o{1}}, i_{\o{2}}, \ldots, i_{\o{k}})}
= \begin{cases}
1&\hbox{ if }i_t = i_s \hbox{ if }t\hbox{ and } s
\hbox{ are connected in }d\\
0&\hbox{ otherwise }
\end{cases}~.
\end{equation}
Note that we say that two vertices $s,t \in [k] \cup [\o{k}]$
are connected if they are in the same connected component in $d$. 

The action of a diagram $d \in \Ph_k$ on an element of $V_n^{\otimes k}$ may then be stated as
\begin{equation}\label{eq:daction}
d \cdot (v_{i_1} \otimes v_{i_2} \otimes \cdots \otimes v_{i_k})=
\sum_{1 \leq i_{\o{1}}, i_{\o{2}}, \ldots, i_{\o{k}} \leq n}
\delta(d)^{(i_1, i_2, \ldots, i_k)}_{(i_{\o{1}}, i_{\o{2}}, \ldots, i_{\o{k}})}
v_{i_{\o{1}}} \otimes v_{i_{\o{2}}} \otimes \cdots \otimes v_{i_{\o{k}}}~.
\end{equation}
With this action, each element $d \in \mathsf{P}_k(n)$
is an element of ${\mathrm{End}}_{S_n}( V_n^{\otimes k})$.
We will use the result that $\mathsf{P}_k(n) \cong {\mathrm{End}}_{S_n}( V_n^{\otimes k})$
if $n \geq 2k$ and $\mathsf{P}_{k+\frac{1}{2}}(n) \cong
{\mathrm{End}}_{S_{n-1}}( {\mathrm{Res}}^{S_n}_{S_{n-1}} V_n^{\otimes k})$
if $n \geq 2k+1$.  For a clear exposition of these results, we refer the reader to \cite{BH2,BH3, CST}.

We develop in particular the action of the projection idempotents
$\piok$ and $\piokk$ that are used in Equation \eqref{eq:threealgebras}
when these algebras act on $V_n^{\otimes k}$ and $V_n^{\otimes k+1}$.
We will consider the actions of these elements for $x=n$.

Recall from the end of Section \ref{subsec:defs}
that $\mathsf{p} \in \Ph_1$ is defined by $\mathsf{p} = \{\{1\},\{\o1\}\}$.
The action of this element on $v_i\in V_n$, for $i\in [n]$, using Equation \eqref{eq:daction} is 
$$\mathsf{p} \cdot (v_i) = v_1 + v_2 + \cdots + v_n~.$$
Therefore we have that
$$\frac{1}{n} \mathsf{p} \cdot (v_i - v_n) = 0$$
and
$$\frac{1}{n} \mathsf{p} \cdot ( v_1 + v_2 + \cdots + v_n ) = v_1 + v_2 + \cdots + v_n~.$$
Therefore, $\frac{1}{n} \mathsf{p}$ is
a projection operator from $V_n$ to the subspace
$\mathbb{S}^{(n)}_{S_n}$ and $\pi = (1-\frac{1}{n}\mathsf{p})$ is a projection operator
from $V_n$ to the subspace $\mathbb{S}^{(n-1,1)}_{S_n}$.

\begin{theorem} \label{th:QPkcentralizer} For $n,k \in \mathbb{Z}_{>0}$,
using the action of $d \in \mathsf{QP}_k(n)$ as a linear transformation
on $V_n^{\otimes k}$,
$d \in {\mathrm{End}}_{S_n}( (\mathbb{S}^{(n-1,1)}_{S_n})^{\otimes k} )$
and if $n \geq 2k$, then
\begin{equation*}
\mathsf{QP}_k(n) \cong {\mathrm{End}}_{S_n}\!\left( \left(\mathbb{S}^{(n-1,1)}_{S_n}\right)^{\otimes k} \right)~.
\end{equation*}
\end{theorem}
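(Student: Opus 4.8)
The plan is to leverage the presentation $\mathsf{QP}_k(n) = \piok\,\mathsf{P}_k(n)\,\piok$ from Equation \eqref{eq:threealgebras}, the classical Schur--Weyl isomorphism $\mathsf{P}_k(n) \cong \End_{S_n}(V_n^{\otimes k})$ valid for $n \geq 2k$ (cited in Section \ref{sec:quasicentralizers}), and the elementary fact that a corner $e\,\End_G(M)\,e$ of an endomorphism algebra is again an endomorphism algebra, namely $\End_G(eM)$.

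First I would record that, under the action \eqref{eq:daction} with $x=n$, the element $\pi = \mathbf{1} - \frac{1}{n}\mathsf{p} \in \mathsf{P}_1(n)$ is exactly the $S_n$-equivariant projector $V_n \twoheadrightarrow \mathbb{S}^{(n-1,1)}_{S_n}$ with kernel $\mathbb{S}^{(n)}_{S_n}$, as computed in the paragraph preceding the theorem. Writing $\pi_i$ for the operator acting as $\pi$ in tensor slot $i$ and as the identity elsewhere, the $\pi_i$ are commuting idempotents and $\piok = \pi_1\pi_2\cdots\pi_k$, so $\piok$ is the $S_n$-equivariant projector of $V_n^{\otimes k}$ onto $\bigcap_{i=1}^k \operatorname{im}(\pi_i) = (\mathbb{S}^{(n-1,1)}_{S_n})^{\otimes k}$, viewed as a subspace of $V_n^{\otimes k}$ via $V_n \cong \mathbb{S}^{(n-1,1)}_{S_n}\oplus\mathbb{S}^{(n)}_{S_n}$. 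Since every $d \in \mathsf{P}_k(n)$ commutes with $S_n$ by \eqref{eq:daction}, the operator $\overline d = \piok\, d\, \piok$ sends $V_n^{\otimes k}$ into $(\mathbb{S}^{(n-1,1)}_{S_n})^{\otimes k}$, annihilates the complementary summand, and commutes with $S_n$; restricting it to $(\mathbb{S}^{(n-1,1)}_{S_n})^{\otimes k}$ establishes the first assertion of the theorem for every $n$. By Lemma \ref{lem:odzero} the span of the $\overline d$ with $d \in \Dh_k$ exhausts $\piok\,\mathsf{P}_k(n)\,\piok$, so this accounts for all of $\mathsf{QP}_k(n)$.

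Now assume $n \geq 2k$. The isomorphism $\Phi\colon \mathsf{P}_k(n)\xrightarrow{\ \sim\ }\End_{S_n}(V_n^{\otimes k})$ is implemented by \eqref{eq:daction}, so $\Phi(\piok)$ is precisely the projector $P := \piok$ above. Restricting $\Phi$ to the corner subalgebra then gives an algebra isomorphism
\[
\mathsf{QP}_k(n) = \piok\,\mathsf{P}_k(n)\,\piok \ \xrightarrow{\ \sim\ }\ P\,\End_{S_n}(V_n^{\otimes k})\,P,
\]
carrying the unit $\piok$ of $\mathsf{QP}_k(n)$ to the idempotent $P$. Finally I would invoke the corner lemma: for a finite group $G$, a finite-dimensional $G$-module $M$, and an idempotent $e \in \End_G(M)$, the submodule $eM$ is a direct summand and the restriction map $e\,\End_G(M)\,e \to \End_G(eM)$, $f \mapsto f|_{eM}$, is an algebra isomorphism sending $e$ to $\operatorname{id}_{eM}$ (injectivity because $f = efe$ vanishes on $(1-e)M$; surjectivity because $g \in \End_G(eM)$ lifts to $g\circ e$). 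Applying this with $G = S_n$, $M = V_n^{\otimes k}$, $e = P$, $eM = (\mathbb{S}^{(n-1,1)}_{S_n})^{\otimes k}$ yields $P\,\End_{S_n}(V_n^{\otimes k})\,P \cong \End_{S_n}((\mathbb{S}^{(n-1,1)}_{S_n})^{\otimes k})$, and composing with the displayed isomorphism finishes the proof.

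The step requiring the most care is the bookkeeping of the previous-to-last paragraph: verifying that $\piok$ really acts as the tensor-product projector onto $(\mathbb{S}^{(n-1,1)}_{S_n})^{\otimes k}$ — equivalently that the product of the slotwise projectors $\pi_i$ is the projector onto the intersection of their images — and hence that the abstract unit $\piok$ of $\mathsf{QP}_k(n)$ matches $\operatorname{id}$ on the tensor power under $\Phi$. Once this is settled the remaining arguments are formal; note in particular that nothing here needs $\mathsf{P}_k(n)$ to be semisimple, so the isomorphism holds for all $n \geq 2k$.
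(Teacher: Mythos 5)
Your proof is correct and follows essentially the same route as the paper: both rest on the observation that $\piok$ acts as the $S_n$-equivariant projector onto $(\mathbb{S}^{(n-1,1)}_{S_n})^{\otimes k}$ together with the Schur--Weyl isomorphism $\mathsf{P}_k(n)\cong \End_{S_n}(V_n^{\otimes k})$ for $n\geq 2k$, and your corner lemma's surjectivity step (lifting $g$ to $g\circ e$) is exactly the paper's extension-by-zero argument. The only difference is presentational: packaging the last step as the general isomorphism $e\,\End_G(M)\,e\cong\End_G(eM)$ makes the injectivity explicit, which the paper leaves implicit.
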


\begin{proof}
We see that $\piok = \overline{\mathbf{1}^{\otimes k}} \in \mathsf{QP}_k(n)$ acts as the identity
on $(\mathbb{S}^{(n-1,1)}_{S_n})^{\otimes k}$.  For any other diagram
$d \in \Dh_k$, $\overline{d} = \piok d \piok$
acts first by the identity on $(\mathbb{S}^{(n-1,1)}_{S_n})^{\otimes k}$,
next by Equation \eqref{eq:daction}
(which may be an element of $V_n^{\otimes k}$ and not $(\mathbb{S}^{(n-1,1)}_{S_n})^{\otimes k}$),
followed by a projection into the subspace $(\mathbb{S}^{(n-1,1)}_{S_n})^{\otimes k}$.
Moreover,
$$\overline{d} \in {\mathrm{End}}_{S_n}\!\left(\left(\mathbb{S}^{(n-1,1)}_{S_n}\right)^{\otimes k}\right)$$
since it is a composition of three operations which commute with the diagonal action
of $S_n$.

Now take an arbitrary element $f \in {\mathrm{End}}_{S_n}((\mathbb{S}^{(n-1,1)}_{S_n})^{\otimes k})$.
Since $(\mathbb{S}^{(n-1,1)}_{S_n})^{\otimes k} \subseteq V_n^{\otimes k}$, when $n\geq 2k$ there
is an element $d \in \mathsf{P}_k(n) \cong {\mathrm{End}}_{S_n}(V_n^{\otimes k})$ such that
$d\cdot w = f(w)$ for all $w \in (\mathbb{S}^{(n-1,1)}_{S_n})^{\otimes k}$ and
$d\cdot w' = 0$ for all $w' \in$ the orthogonal complement
of $(\mathbb{S}^{(n-1,1)}_{S_n})^{\otimes k}$ in $V_n^{\otimes k}$.
Then $f(w) = \piok d \piok\cdot w$ for all $w \in (\mathbb{S}^{(n-1,1)}_{S_n})^{\otimes k}$
and therefore $f \in \mathsf{QP}_k(n)$ and we conclude that
$\mathsf{QP}_k(n) \cong {\mathrm{End}}_{S_n}( (\mathbb{S}^{(n-1,1)}_{S_n})^{\otimes k} )$.
\end{proof}

Let the set $\{ w_i : i \in [n-1] \}$ where $w_i := v_i - v_n$ be a
basis for $\mathbb{S}^{(n-1,1)}_{S_n}$.
We note that $v_n \in V_n$ and it is invariant
under the action of $S_{n-1}$ embedded in $S_n$ where the element $n$ is fixed by all elements of $S_{n-1}$.
Consider the embedding $$\left(\mathbb{S}^{(n-1,1)}_{S_n}\right)^{\otimes k}
\hookrightarrow \left(\mathbb{S}^{(n-1,1)}_{S_n}\right)^{\otimes k} \otimes V_n$$ that maps for $i_1, \ldots, i_k \in [n-1]$
$$w_{i_1} \otimes \cdots \otimes w_{i_k} \mapsto
w_{i_1} \otimes \cdots \otimes w_{i_k} \otimes v_n$$
We assume that elements of
$\mathsf{QP}_{k+\frac{1}{2}}(n)$ acts on
the space
${\mathrm{Res}}^{S_n}_{S_{n-1}}(\mathbb{S}^{(n-1,1)}_{S_n})^{\otimes k} \otimes \mathbb{C}\text{-Span}\{v_n \}$.

\begin{theorem} \label{th:QPkhalfcentralizer} For $n,k \in \mathbb{Z}_{>0}$, using the
action of an element $d \in \mathsf{QP}_{k+\frac{1}{2}}(n)$ on $V_n^{\otimes k}$, then
$d \in {\mathrm{End}}_{S_{n-1}}( {\mathrm{Res}}^{S_n}_{S_{n-1}}(\mathbb{S}^{(n-1,1)}_{S_n})^{\otimes k} )$
and if $n \geq 2k+1$, then
\begin{equation*}
\mathsf{QP}_{k+\frac{1}{2}}(n) \cong {\mathrm{End}}_{S_{n-1}}\!\left(
{\mathrm{Res}}^{S_n}_{S_{n-1}}\!\left(\mathbb{S}^{(n-1,1)}_{S_n}\right)^{\otimes k} \right)~.
\end{equation*}
\end{theorem}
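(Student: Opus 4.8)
The plan is to mirror the proof of Theorem \ref{th:QPkcentralizer}, replacing the role of $\mathsf{P}_k(n) \cong \End_{S_n}(V_n^{\otimes k})$ with the half-partition algebra version $\mathsf{P}_{k+\frac{1}{2}}(n) \cong \End_{S_{n-1}}(\Res^{S_n}_{S_{n-1}} V_n^{\otimes k})$, which holds when $n \geq 2k+1$. First I would observe that, via the embedding $(\mathbb{S}^{(n-1,1)})^{\otimes k} \hookrightarrow (\mathbb{S}^{(n-1,1)})^{\otimes k} \otimes V_n \cong {\mathrm{Ind}}^{S_n}_{S_{n-1}}{\mathrm{Res}}^{S_n}_{S_{n-1}} (\mathbb{S}^{(n-1,1)})^{\otimes k}$ sending $w_{i_1} \otimes \cdots \otimes w_{i_k} \mapsto w_{i_1}\otimes\cdots\otimes w_{i_k}\otimes v_n$, the idempotent $\piokk = \piok \otimes \mathbf{1}$ acts (for $x = n$) as the projection onto $(\mathbb{S}^{(n-1,1)})^{\otimes k} \otimes \mathbb{C}\text{-Span}\{v_n\}$ inside $V_n^{\otimes k} \otimes V_n = V_n^{\otimes k+1}$. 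Here the first $k$ tensor factors get projected to $\mathbb{S}^{(n-1,1)}$ by the $\piok$ part, and the last tensor factor is untouched; restricting to the $S_{n-1}$-fixed vector $v_n$ in that last factor is exactly what identifies this space with ${\mathrm{Res}}^{S_n}_{S_{n-1}}(\mathbb{S}^{(n-1,1)})^{\otimes k}$. This is the same space appearing in the statement, so I would fix notation for it, say $W := {\mathrm{Res}}^{S_n}_{S_{n-1}}(\mathbb{S}^{(n-1,1)}_{S_n})^{\otimes k} \otimes \mathbb{C}\text{-Span}\{v_n\}$.

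Next I would show each $\overline{d} = \piokk\, d\, \piokk \in \mathsf{QP}_{k+\frac{1}{2}}(n)$ lies in $\End_{S_{n-1}}(W)$. Since $d \in \mathsf{P}_{k+\frac{1}{2}}(x) \subseteq \mathsf{P}_{k+1}(x)$, the action of $d$ on $V_n^{\otimes k+1}$ commutes with the diagonal $S_{n-1}$-action (indeed with $S_n$, but we only need $S_{n-1}$, and the condition that $k+1,\overline{k+1}$ lie in the same block of $d$ ensures $d$ stabilizes the subspace with $v_n$ in the last slot up to the projection). The operator $\piokk$ is a composition of the projector $\piok$ on the first $k$ factors and the identity on the last, each commuting with $S_{n-1}$; hence $\overline{d} = \piokk \, d\, \piokk$ is a composition of three $S_{n-1}$-equivariant maps and lies in $\End_{S_{n-1}}(W)$. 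This direction gives the inclusion $\mathsf{QP}_{k+\frac{1}{2}}(n) \hookrightarrow \End_{S_{n-1}}(W)$ as linear transformations (that this map is injective follows from the fact that $\piokk$ acts as the identity on $W$ and the basis elements $\overline{d}$ are linearly independent by the half-integer analogue of Proposition \ref{Prop : coeff d bar}, together with semisimplicity).

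For the reverse inclusion — surjectivity — I would take an arbitrary $f \in \End_{S_{n-1}}(W)$. Because $W \subseteq V_n^{\otimes k+1}$ and, when $n \geq 2(k+1)-1 = 2k+1$, we have $\mathsf{P}_{(k+1)-\frac12}(n) = \mathsf{P}_{k+\frac12}(n) \cong \End_{S_{n-1}}(\Res^{S_n}_{S_{n-1}} V_n^{\otimes k})$, and $\Res^{S_n}_{S_{n-1}}V_n^{\otimes k} \cong V_n^{\otimes k+1}/(\text{something})$ — more precisely, I should realize $W$ as a direct summand of $\Res^{S_n}_{S_{n-1}}V_n^{\otimes k}$ (which is $V_n^{\otimes k}\otimes\Res V_n \cong V_n^{\otimes k}\otimes(\text{trivial}\oplus\mathbb{S}^{(n-1)})$, and $W$ sits inside $V_n^{\otimes k}\otimes v_n$). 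Extend $f$ to an $S_{n-1}$-endomorphism $\tilde f$ of all of $\Res^{S_n}_{S_{n-1}}V_n^{\otimes k}$ by declaring it to be $f$ on $W$ and $0$ on an $S_{n-1}$-invariant complement. Then $\tilde f$ corresponds to an element $d \in \mathsf{P}_{k+\frac{1}{2}}(n)$, and $f(w) = \piokk\, d\, \piokk \cdot w$ for all $w \in W$ since $\piokk$ acts as the identity on $W$ and annihilates the complement. Hence $f \in \mathsf{QP}_{k+\frac{1}{2}}(n)$.

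The main obstacle I anticipate is bookkeeping the identification $W \cong {\mathrm{Res}}^{S_n}_{S_{n-1}}V_n^{\otimes k}$-summand carefully enough to apply the known half-partition centralizer theorem: one must be precise that $\piokk$ acting on $V_n^{\otimes k+1}$ really does land in (and act as identity on) exactly the copy of $W$ described in the theorem statement, and that the complement on which one sets $\tilde f = 0$ is genuinely $S_{n-1}$-invariant (it is, being the image of $\mathbf{1}^{\otimes k+1} - \piokk$ inside $\Res^{S_n}_{S_{n-1}}V_n^{\otimes k}$). Once this identification is nailed down, the argument is a formal copy of Theorem \ref{th:QPkcentralizer}'s proof with $S_n \rightsquigarrow S_{n-1}$, $\piok \rightsquigarrow \piokk$, and the bound $n \geq 2k \rightsquigarrow n \geq 2k+1$.
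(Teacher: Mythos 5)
Your proposal is correct and follows essentially the same route as the paper's own proof: show that $\piokk$ acts as the identity on the copy of ${\mathrm{Res}}^{S_n}_{S_{n-1}}\bigl(\mathbb{S}^{(n-1,1)}_{S_n}\bigr)^{\otimes k}\otimes\mathbb{C}\text{-Span}\{v_n\}$ and as the projection onto it from $V_n^{\otimes k}\otimes\mathbb{C}\text{-Span}\{v_n\}$, deduce $S_{n-1}$-equivariance of each $\overline{d}$ as a composition of equivariant maps, and obtain surjectivity by using $\mathsf{P}_{k+\frac{1}{2}}(n)\cong{\mathrm{End}}_{S_{n-1}}({\mathrm{Res}}^{S_n}_{S_{n-1}}V_n^{\otimes k})$ for $n\geq 2k+1$ to pick a preimage that equals $f$ on this subspace and vanishes on an invariant complement. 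The only quibble is the passing claim that $\piokk$ projects $V_n^{\otimes k+1}$ onto $(\mathbb{S}^{(n-1,1)}_{S_n})^{\otimes k}\otimes\mathbb{C}\text{-Span}\{v_n\}$ (its image there is $(\mathbb{S}^{(n-1,1)}_{S_n})^{\otimes k}\otimes V_n$), but you immediately correct this by noting the last factor is untouched, so the argument is unaffected.
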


\begin{proof}
We extend the action from Equation \eqref{eq:daction} to an element
$\o{d} = \piokk d \piokk$ for
$d \in {\mathcal D}_{k+\frac{1}{2}}$ and to $\mathsf{QP}_{k+\frac{1}{2}}(n)$ by
linearity.  The action of a diagram $d \in \Ph_{k+\frac{1}{2}}$
also follows Equation \eqref{eq:daction} and because $k+1$ and $\o{k+1}$
are in a connected component of $d$, then any $d$ in
$\Ph_{k+\frac{1}{2}}$ maps an element of
$(\mathbb{S}^{(n-1,1)}_{S_n})^{\otimes k} \otimes \mathbb{C}\text{-Span}\{ v_n \}$
to an element of $V_n^{\otimes k} \otimes \mathbb{C}\text{-Span}\{ v_n \}$.
Moreover, the action of each $d \in \Pa_{k+\frac{1}{2}}(n)$
is an element of ${\mathrm{End}}_{S_{n-1}}({\mathrm{Res}}^{S_n}_{S_{n-1}}V_n^{\otimes k})$

The expression in Equation \eqref{eq:projector}
allows us to deduce that $\piokk$ acts as the identity
on $(\mathbb{S}^{(n-1,1)}_{S_n})^{\otimes k} \otimes \mathbb{C}\text{-Span}\{w \}$.
Moreover, for $i_1, \ldots, i_k \in [n]$,
$$\piokk \cdot (v_{i_1} \otimes v_{i_2} \otimes \cdots \otimes v_{i_k} \otimes w)
= w_{i_1}' \otimes w_{i_2}' \otimes \cdots \otimes w_{i_k}' \otimes v_n~.$$
where
\begin{equation}\label{eq:wp}
w'_i = \frac{1}{n}(-v_1 -v_2 - \cdots +(n-1) v_i- v_{i+1} - \cdots -v_n)
\end{equation}
and hence it is a projection from $V_n^{\otimes k} \otimes \mathbb{C}\text{-Span}\{ v_n \}$
onto $(\mathbb{S}^{(n-1,1)}_{S_n})^{\otimes k} \otimes \mathbb{C}\text{-Span}\{ v_n \}$.
With the isomorphism $(\mathbb{S}^{(n-1,1)}_{S_n})^{\otimes k} \cong
(\mathbb{S}^{(n-1,1)}_{S_n})^{\otimes k} \otimes \mathbb{C}\text{-Span}\{ v_n \}$,
$$\overline{d} \in {\mathrm{End}}_{S_{n-1}}\!\left(
{\mathrm{Res}}^{S_n}_{S_{n-1}}\!\left(\mathbb{S}^{(n-1,1)}_{S_n}\right)^{\otimes k}\right)$$
since it is a composition of three operations which commute with the diagonal action
of $S_{n-1}$.

Take an element $f \in {\mathrm{End}}_{S_{n-1}}(
{\mathrm{Res}}^{S_n}_{S_{n-1}}(\mathbb{S}^{(n-1,1)}_{S_n})^{\otimes k} )$.
Define an element $d$
such that for all $w \in (\mathbb{S}^{(n-1,1)}_{S_n})^{\otimes k}$,
we have $d \cdot (w) = f(w)$ and for all $w'$ in the orthogonal complement of $(\mathbb{S}^{(n-1,1)}_{S_n})^{\otimes k}$
in $V_n^{\otimes k}$, then $d \cdot (w') = 0$.  Since
${\mathrm{Res}}^{S_n}_{S_{n-1}}(\mathbb{S}^{(n-1,1)}_{S_n})^{\otimes k}
\subseteq {\mathrm{Res}}^{S_n}_{S_{n-1}}V_n^{\otimes k}$ is an $S_{n-1}$ invariant subspace, if $n \geq 2k+1$
we can be assured that $d \in \mathsf{P}_{k + \frac{1}{2}}(n) \cong
{\mathrm{End}}_{S_{n-1}}( {\mathrm{Res}}^{S_n}_{S_{n-1}}V_n^{\otimes k})$.
It follows that
$f(w) = \piokk d \piokk\cdot w$ for all
$w \in (\mathbb{S}^{(n-1,1)}_{S_n})^{\otimes k}$ and hence
$f \in \mathsf{QP}_{k+\frac{1}{2}}(n)$ and we conclude
$\mathsf{QP}_{k+\frac{1}{2}}(n) \cong {\mathrm{End}}_{S_{n-1}}(
{\mathrm{Res}}^{S_n}_{S_{n-1}}(\mathbb{S}^{(n-1,1)}_{S_n})^{\otimes k} )$.
\end{proof}

We remark that this isomorphism then gives us a better picture of the structure
of the half quasi-partition algebra
since we have as a corollary that $\mathsf{QP}_{k+\frac{1}{2}}(n) \cong \mathsf{P}_{k}(n-1)$~.
This is because ${\mathrm{Res}}^{S_n}_{S_{n-1}} \mathbb{S}^{(n-1,1)}_{S_n} \cong
\mathbb{S}^{(n-2,1)}_{S_{n-1}} \oplus \mathbb{S}^{(n-1)}_{S_{n-1}} \cong V_{n-1}$.

\begin{theorem} \label{th:QPktildecentralizer} For $n,k \in \mathbb{Z}_{>0}$,
using the linear action of $d \in \widetilde{\mathsf{QP}}_{k+1}(n)$,
\begin{equation*}
d \in {\mathrm{End}}_{S_{n}}( (\mathbb{S}^{(n-1,1)}_{S_n})^{\otimes k}\otimes V_n )
\end{equation*}
and if $n \geq 2k+2$, then
\begin{equation*}
\widetilde{\mathsf{QP}}_{k+1}(n) \cong {\mathrm{End}}_{S_{n}}\!\left( \left(\mathbb{S}^{(n-1,1)}_{S_n}\right)^{\otimes k}\otimes V_n \right)~.
\end{equation*}
\end{theorem}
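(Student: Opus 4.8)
The plan is to reuse, essentially verbatim, the idempotent-truncation argument from the proofs of Theorems~\ref{th:QPkcentralizer} and~\ref{th:QPkhalfcentralizer}; in fact the present case is the cleanest of the three, since the centralizer is taken over $S_n$ (not $S_{n-1}$) and we tensor by the full module $V_n$ rather than by a one-dimensional space.

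First I would record the key structural fact that $\piokk = \pi^{\otimes k}\otimes\mathbf{1}$, acting on $V_n^{\otimes k+1} = V_n^{\otimes k}\otimes V_n$ by Equation~\eqref{eq:daction}, is precisely the orthogonal projection onto the subspace $(\mathbb{S}^{(n-1,1)}_{S_n})^{\otimes k}\otimes V_n$. This is immediate from the computation just before Theorem~\ref{th:QPkcentralizer} that $\pi = \mathbf{1}-\tfrac1n\mathsf{p}$ is the orthogonal projection $V_n\to\mathbb{S}^{(n-1,1)}_{S_n}$, combined with the fact that $\mathbf{1}$ acts as the identity on the final tensor factor. Because permutation matrices are orthogonal, the inner product used here is $S_n$-invariant, so this projection is $S_n$-equivariant and its image is an $S_n$-submodule of $V_n^{\otimes k+1}$ whose orthogonal complement is again $S_n$-stable.

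Next I would prove the containment $\widetilde{\mathsf{QP}}_{k+1}(n)\subseteq{\mathrm{End}}_{S_n}\!\bigl((\mathbb{S}^{(n-1,1)}_{S_n})^{\otimes k}\otimes V_n\bigr)$: for any $d\in\Ph_{k+1}$ the operator $\tilde d = \piokk d\piokk$ on $V_n^{\otimes k+1}$ is a composition of the $S_n$-equivariant projection $\piokk$, the $S_n$-equivariant map given by $d$ via Equation~\eqref{eq:daction}, and $\piokk$ again, hence lies in ${\mathrm{End}}_{S_n}(V_n^{\otimes k+1})$ and restricts to an $S_n$-endomorphism of $(\mathbb{S}^{(n-1,1)}_{S_n})^{\otimes k}\otimes V_n$; extending linearly over the basis~\eqref{eq:basistildeQP} gives the claim. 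For the reverse inclusion I would use that $n\geq 2k+2 = 2(k+1)$ guarantees $\mathsf{P}_{k+1}(n)\cong{\mathrm{End}}_{S_n}(V_n^{\otimes k+1})$: given $f\in{\mathrm{End}}_{S_n}\!\bigl((\mathbb{S}^{(n-1,1)}_{S_n})^{\otimes k}\otimes V_n\bigr)$, choose $d\in\mathsf{P}_{k+1}(n)$ acting as $f$ on $(\mathbb{S}^{(n-1,1)}_{S_n})^{\otimes k}\otimes V_n$ and as $0$ on its ($S_n$-invariant) orthogonal complement in $V_n^{\otimes k+1}$; then $\piokk d\piokk$ agrees with $f$ on the subspace, so $f\in\widetilde{\mathsf{QP}}_{k+1}(n)$. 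Finally, injectivity of the natural map $\widetilde{\mathsf{QP}}_{k+1}(n)\to{\mathrm{End}}_{S_n}\!\bigl((\mathbb{S}^{(n-1,1)}_{S_n})^{\otimes k}\otimes V_n\bigr)$ follows because any $\tilde d = \piokk d\piokk$ that vanishes on $(\mathbb{S}^{(n-1,1)}_{S_n})^{\otimes k}\otimes V_n = \piokk V_n^{\otimes k+1}$ already factors through $\piokk$ and hence is the zero operator on all of $V_n^{\otimes k+1}$, so it is zero in $\mathsf{P}_{k+1}(n)$ by the faithfulness of the action for $n\geq 2k+2$.

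I expect the only step needing genuine care — and it is the same one invoked in the two earlier theorems — is the compatibility of the orthogonal projection $\piokk$ with the $S_n$-action: one must be sure the complement of $(\mathbb{S}^{(n-1,1)}_{S_n})^{\otimes k}\otimes V_n$ inside $V_n^{\otimes k+1}$ is $S_n$-stable so that the lift $d$ above exists. Everything else is routine bookkeeping. I would close with a remark, parallel to the one following Theorem~\ref{th:QPkhalfcentralizer}, placing this isomorphism in the Bratteli-like picture of the tower~\eqref{eq:QPtower}, but this is not part of the proof itself.
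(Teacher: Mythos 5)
Your proposal is correct and follows essentially the same route as the paper's proof: show $\piokk$ acts as an $S_n$-equivariant projection of $V_n^{\otimes k+1}$ onto $(\mathbb{S}^{(n-1,1)}_{S_n})^{\otimes k}\otimes V_n$, deduce that each $\tilde d$ is a composition of three $S_n$-equivariant maps, and for the converse use $\mathsf{P}_{k+1}(n)\cong{\mathrm{End}}_{S_n}(V_n^{\otimes k+1})$ for $n\geq 2k+2$ to lift $f$ to a diagram element killing the orthogonal complement. Your extra remarks on the $S_n$-stability of the complement and on injectivity via faithfulness of the $\mathsf{P}_{k+1}(n)$ action are points the paper leaves implicit, but they do not change the argument.
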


\begin{proof}
Finally, we note that $\piokk$ acts as the identity on
the subspace $(\mathbb{S}^{(n-1,1)}_{S_n})^{\otimes k} \otimes V_n$ and it acts as
$$\piokk \cdot (v_{i_1} \otimes v_{i_2} \otimes \cdots \otimes v_{i_k} \otimes v_{i_{k+1}})
= w_{i_1}' \otimes w_{i_2}' \otimes \cdots \otimes w_{i_k}' \otimes v_{i_{k+1}}~,$$
where $w'_i$ is defined in \eqref{eq:wp}
and hence it is also a projection from $V_n^{\otimes k+1}$
onto $(\mathbb{S}^{(n-1,1)}_{S_n})^{\otimes k} \otimes V_n$.

Recall from Equation \eqref{eq:basistildeQP} that a basis of $\widetilde{\mathsf{QP}}_{k+1}(n)$
is indexed by diagrams $d \in \Ph_{k+1}$ such that $d$ has no singletons in the first $k$ positions.
Then any $\tilde{d} = \piokk d \piokk$
acts on $(\mathbb{S}^{(n-1,1)}_{S_n})^{\otimes k} \otimes V_n$ by first the identity
followed by a linear transformation to $V_n^{\otimes k+1}$ followed by a projection
to $(\mathbb{S}^{(n-1,1)}_{S_n})^{\otimes k} \otimes V_n$.
We conclude that
$${\tilde d} \in {\mathrm{End}}_{S_{n}}\!\left(\left(\mathbb{S}^{(n-1,1)}_{S_n}\right)^{\otimes k} \otimes V_n\right)$$
since it is a composition of three operators which commute with the diagonal action of $S_n$.

Now assume that $n \geq 2k+2$,
if we take any $f \in {\mathrm{End}}_{S_{n}}( (\mathbb{S}^{(n-1,1)}_{S_n})^{\otimes k}\otimes V_n )$,
then since $(\mathbb{S}^{(n-1,1)}_{S_n})^{\otimes k}\otimes V_n \subseteq V_n^{\otimes k+1}$
we can choose $d \in \mathsf{P}_{k+1}(n) \cong {\mathrm{End}}_{S_{n}}( V_n^{\otimes k+1} )$
such that for each $w \in (\mathbb{S}^{(n-1,1)}_{S_n})^{\otimes k}\otimes V_n$,
$d \cdot w = f(w)$ and for each $w'$ in the orthogonal complement
of $(\mathbb{S}^{(n-1,1)}_{S_n})^{\otimes k}\otimes V_n$,
we get $d \cdot w' = 0$.  Therefore, for all $w \in (\mathbb{S}^{(n-1,1)}_{S_n})^{\otimes k}\otimes V_n$,
$$\tilde d \cdot w
= (\piokk d \piokk) \cdot w
= (\piokk d) \cdot w
= \piokk \cdot f(w) = f(w).$$
We can therefore conclude that $f \in \widetilde{\mathsf{QP}}_{k+1}(n)$
and $\widetilde{\mathsf{QP}}_{k+1}(n)
\cong {\mathrm{End}}_{S_{n}}( (\mathbb{S}^{(n-1,1)}_{S_n})^{\otimes k}\otimes V_n )~.$
\end{proof}

\section{Formulae for the dimensions of the simple modules} \label{sec:formulaedim}

The double centralizer theorem \cite[p.158]{P} implies that since
$\mathsf{QP}_k(n)$ is isomorphic to the centralizer algebra of $S_n$
when it acts diagonally on $(\mathbb{S}^{(n-1,1)}_{S_n})^{\otimes k}$,
then the irreducible representations of $\mathsf{QP}_k(n)$
are indexed by partitions $\lambda$ of $n$.
Moreover, the dimension of the
irreducible indexed by the partition $\lambda$ is equal to the multiplicity
of the irreducible $\mathbb{S}^{\lambda}_{S_n}$ in $(\mathbb{S}^{(n-1,1)}_{S_n})^{\otimes k}$.

There is an explicit formula for the dimension in terms of Stirling numbers of the
second kind and standard tableaux given in \cite[Theorem 4.6]{DO}.  That reference also states
the dimension in terms of a combinatorial interpretation of Kronecker tableaux \cite{CG}.
In this paper we will rely on an interpretation that has been developed
more recently \cite{COSSZ, HJ, BH2} in terms of set valued tableaux
in place of Kronecker tableaux \cite{CG, BDE} or vacillating tableaux \cite{K}.
We will use the result that the irreducible
representations of $\mathsf{QP}_k(n)$ have non-zero dimension if $|\o\lambda| \leq k$
where $\o\lambda = (\lambda_2, \lambda_3, \ldots, \lambda_{\ell(\lambda)})$.

We will use the structure of the chain of algebras from Equation \eqref{eq:QPtower} in order to
determine the dimensions of the irreducible representations
of $\mathsf{QP}_{k}(n)$, $\mathsf{QP}_{k+\frac{1}{2}}(n)$
and $\widetilde{\mathsf{QP}}_{k}(n)$.
We refer the reader to the diagram in Figure \ref{fig:Bdiagram}
to view a graphic representation of the
results that we present below.

Assume that $\lambda$ is a partition of an integer
$n$, then we will use the notation $\mu \rightarrow \lambda$
to indicate that $\mu$ is a partition of $n-1$ and
$\mu$ is contained in $\lambda$.
Alternatively, if $\mu$ is a partition of $n-1$, then
$\lambda \leftarrow \mu$ will indicate that $\lambda$
is a partition of $k$ and $\mu$ is contained in $\lambda$.
If $\lambda$ is a partition of $n$ and
$\mu$ is a partition of $n-1$, then
$\mu \rightarrow \lambda$ if and only if $\o\mu = \o\lambda$ or $\o\mu \rightarrow \o\lambda$.

Each row of the diagram in Figure \ref{fig:Bdiagram} displays partitions $\overline{\lambda}$
where $\lambda$ is in the index set of the irreducible
representations of the chain algebras from Equation
\eqref{eq:QPtower}.  The rows of the diagram in that figure are color coded as follows:
\begin{itemize}
\item The irreducible representations of $\mathsf{QP}_{k}(n)$ are displayed in {\color{red}{red}}.
\item The irreducible representations of $\mathsf{QP}_{k+\frac{1}{2}}(n)$ are displayed in {\color{blue}{blue}}.
\item The irreducible representations of $\widetilde{\mathsf{QP}}_{k+1}(n)$ are displayed in {\color{darkgreen}{green}}.
\end{itemize}

We will describe the relations of the rows of this diagram in the results below,
but first summarize the relationship here.
There is an inclusion of $\mathsf{QP}_{k}(n)$ into $\mathsf{QP}_{k+\frac{1}{2}}(n)$
and an inclusion of $\mathsf{QP}_{k+\frac{1}{2}}(n)$ into $\widetilde{\mathsf{QP}}_{k}(n)$
followed by a projection of the algebra $\widetilde{\mathsf{QP}}_{k}(n)$
into $\mathsf{QP}_{k+1}(n)$.  The diagram is similar to a Bratteli diagram except
that, because of the projection operation, the dimension is no longer the
number of paths in the diagram and is instead something slightly more complex.

The relations between the irreducibles in the rows of the diagram are summarized as follows:
\begin{itemize}
\item (Equation \eqref{eq:p1}) Between the $\mathsf{QP}_{k}(n)$ and $\mathsf{QP}_{k+\frac{1}{2}}(n)$ rows
there is an edge from {\color{red}{$\o\lambda$}} to {\color{blue}{$\o\mu$}}
if {\color{blue}{$\o\mu$}} $=$ {\color{red}{$\o\lambda$}}
or {\color{blue}{$\o\mu$}} $\rightarrow$ {\color{red}{$\o\lambda$}}
(alternatively, if {\color{blue}{$\mu$}} $\rightarrow$ {\color{red}{$\lambda$}}).
\item (Equation \eqref{eq:p2}) Between the $\mathsf{QP}_{k+\frac{1}{2}}(n)$ and $\widetilde{\mathsf{QP}}_{k+1}(n)$ rows
there is an edge from {\color{blue}{$\o\mu$}} to {\color{darkgreen}{$\o\lambda$}}
if {\color{darkgreen}{$\o\lambda$}} $=$ {\color{blue}{$\o\mu$}}
or {\color{darkgreen}{$\o\lambda$}} $\leftarrow$ {\color{blue}{$\o\mu$}}
(alternatively, these two conditions may be stated as
`if {\color{darkgreen}{$\lambda$}} $\leftarrow$ {\color{blue}{$\mu$}}').
\item (Equation \eqref{eq:p3}) Between the $\widetilde{\mathsf{QP}}_{k+1}(n)$ and $\mathsf{QP}_{k}(n)$ rows
there is an edge from {\color{darkgreen}{$\o\lambda$}} to {\color{red}{$\o\lambda$}}
but the dimension of the irreducible {\color{red}{$\o\lambda$}}
is equal to the dimension of the irreducible {\color{darkgreen}{$\o\lambda$}}
minus the dimension of the irreducible {\color{red}{$\o\lambda$}} at $k-1$.
\end{itemize}

The dimensions of the irreducible representations can be calculated recursively using the
following three results.

\begin{theorem} 
Let $n \geq 2k+1$, then
for $\mu \vdash n-1$ such that $|\o\mu| < k$, then
\begin{equation} \label{eq:p1}
\dim \Big(V^{\mu}_{\mathsf{QP}_{k+\frac{1}{2}}(n)}\Big)
= \sum_{\lambda \leftarrow \mu} \dim (V^{\lambda}_{\mathsf{QP}_{k}(n)})~.
\end{equation}
\end{theorem}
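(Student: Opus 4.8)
The plan is to deduce Equation \eqref{eq:p1} from the branching rule between the towers $\mathsf{QP}_{k}(n)$ and $\mathsf{QP}_{k+\frac12}(n)$, which in turn follows from the centralizer interpretations (Theorems \ref{th:QPkcentralizer} and \ref{th:QPkhalfcentralizer}) together with the classical double centralizer theorem. Concretely, since $n \geq 2k+1 \geq 2k$, Theorem \ref{th:QPkcentralizer} identifies $\mathsf{QP}_k(n)$ with $\mathrm{End}_{S_n}((\mathbb{S}^{(n-1,1)})^{\otimes k})$ and Theorem \ref{th:QPkhalfcentralizer} identifies $\mathsf{QP}_{k+\frac12}(n)$ with $\mathrm{End}_{S_{n-1}}(\mathrm{Res}^{S_n}_{S_{n-1}}(\mathbb{S}^{(n-1,1)})^{\otimes k})$. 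By the double centralizer theorem, $\dim(V^{\lambda}_{\mathsf{QP}_k(n)})$ is the multiplicity $[\,(\mathbb{S}^{(n-1,1)})^{\otimes k} : \mathbb{S}^{\lambda}_{S_n}\,]$, and $\dim(V^{\mu}_{\mathsf{QP}_{k+\frac12}(n)})$ is the multiplicity $[\,\mathrm{Res}^{S_n}_{S_{n-1}}(\mathbb{S}^{(n-1,1)})^{\otimes k} : \mathbb{S}^{\mu}_{S_{n-1}}\,]$.

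First I would write the decomposition $(\mathbb{S}^{(n-1,1)})^{\otimes k} \cong \bigoplus_{\lambda \vdash n} (\mathbb{S}^{\lambda}_{S_n})^{\oplus m_\lambda}$ where $m_\lambda = \dim(V^{\lambda}_{\mathsf{QP}_k(n)})$. Applying $\mathrm{Res}^{S_n}_{S_{n-1}}$ to both sides and using the classical branching rule for symmetric groups, $\mathrm{Res}^{S_n}_{S_{n-1}}\mathbb{S}^{\lambda}_{S_n} \cong \bigoplus_{\mu \rightarrow \lambda} \mathbb{S}^{\mu}_{S_{n-1}}$ (sum over $\mu \vdash n-1$ obtained from $\lambda$ by removing a box), one gets
\[
\mathrm{Res}^{S_n}_{S_{n-1}}(\mathbb{S}^{(n-1,1)})^{\otimes k}
\cong \bigoplus_{\mu \vdash n-1} \mathbb{S}^{\mu}_{S_{n-1}} \otimes \Bigl(\bigoplus_{\lambda : \mu \rightarrow \lambda} \big(V^{\lambda}_{\mathsf{QP}_k(n)}\big)\Bigr),
\]
so that $\dim(V^{\mu}_{\mathsf{QP}_{k+\frac12}(n)}) = \sum_{\mu \rightarrow \lambda} \dim(V^{\lambda}_{\mathsf{QP}_k(n)})$. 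Then I would translate the condition $\mu \rightarrow \lambda$ into the notation of the statement: as recalled in the excerpt, for $\lambda \vdash n$ and $\mu \vdash n-1$ the relation $\mu \rightarrow \lambda$ is equivalent to $\lambda \leftarrow \mu$, so the right-hand sum is exactly $\sum_{\lambda \leftarrow \mu} \dim(V^{\lambda}_{\mathsf{QP}_k(n)})$, yielding \eqref{eq:p1}.

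I would then address the hypothesis $|\o\mu| < k$, which ensures the indexing is within the range where these algebras are semisimple and the centralizer identifications hold without degeneracy: one needs $n \geq 2k$ for $\mathsf{QP}_k(n)$ and $n \geq 2k+1$ for $\mathsf{QP}_{k+\frac12}(n)$, both guaranteed by $n \geq 2k+1$, and one needs the partitions $\lambda$ appearing in the sum to genuinely index nonzero irreducibles, which by the remark in the excerpt happens precisely when $|\o\lambda| \leq k$; since removing a box from $\lambda$ to get $\mu$ can decrease $|\o\lambda|$ by at most one, $|\o\mu| < k$ forces every $\lambda \leftarrow \mu$ to satisfy $|\o\lambda| \leq k$, so no spurious terms arise.

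The main obstacle I anticipate is purely bookkeeping rather than conceptual: one must be careful that the branching rule $\mathrm{Res}^{S_n}_{S_{n-1}}\mathbb{S}^\lambda = \bigoplus_{\mu\rightarrow\lambda}\mathbb{S}^\mu$ is multiplicity-free (it is), so that the two multiplicity counts match term-by-term, and one must verify that the identification $\mathsf{QP}_{k+\frac12}(n) \cong \mathrm{End}_{S_{n-1}}(\mathrm{Res}^{S_n}_{S_{n-1}}(\mathbb{S}^{(n-1,1)})^{\otimes k})$ from Theorem \ref{th:QPkhalfcentralizer} is compatible, as an inclusion of algebras, with the inclusion $\mathsf{QP}_k(n) \hookrightarrow \mathsf{QP}_{k+\frac12}(n)$ of the tower \eqref{eq:QPtower} — but this compatibility is exactly the statement that restriction of the $S_{n}$-action to $S_{n-1}$ is what connects the two centralizers, so it is immediate from the constructions. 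An alternative, more self-contained route avoiding the double centralizer theorem would be to compute $\dim(V^{\mu}_{\mathsf{QP}_{k+\frac12}(n)})$ directly from the standard-module combinatorics of Section \ref{sec:halfPkreps} (as $\mathsf{QP}_{k+\frac12}(n) \cong \mathsf{P}_k(n-1)$), but the centralizer argument is cleaner and I would present that one.
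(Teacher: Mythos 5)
Your argument is correct, and it reaches the identity by a slightly different (and in fact more self-contained) route than the paper. The paper's proof also starts from Theorems \ref{th:QPkcentralizer} and \ref{th:QPkhalfcentralizer}, but then invokes the see-saw pair machinery \cite[Theorem 9.2.2]{GW} for the pair $(S_{n-1},S_n)$ against $(\mathsf{QP}_k(n),\mathsf{QP}_{k+\frac12}(n))$: the see-saw reciprocity identifies $\operatorname{Hom}$-spaces, so the branching multiplicities of a $\mathsf{QP}_{k+\frac12}(n)$-irreducible restricted to $\mathsf{QP}_k(n)$ are exactly the (multiplicity-free) symmetric-group branching multiplicities, and the dimension formula follows by summing over the restriction. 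That argument genuinely uses the algebra inclusion $\mathsf{QP}_k(n)\hookrightarrow\mathsf{QP}_{k+\frac12}(n)$ from the tower \eqref{eq:QPtower}. Your version instead decomposes $(\mathbb{S}^{(n-1,1)})^{\otimes k}$ into $S_n$-irreducibles, restricts to $S_{n-1}$, applies the classical branching rule, and reads off both sides as multiplicities via the double centralizer theorem applied to each algebra separately; no compatibility of the two centralizer identifications with the algebra inclusion is actually needed (the worry you raise at the end is superfluous for your route, though it is the crux of the see-saw formulation). What the see-saw buys the paper is the finer module-theoretic statement (the actual restriction rule for $\mathsf{QP}_{k+\frac12}(n)\downarrow\mathsf{QP}_k(n)$, which underlies the Bratteli-type diagram), not just the numerical identity; what your route buys is elementariness, and it is essentially the same style of multiplicity bookkeeping the paper itself uses, via Frobenius reciprocity, to prove the companion formula \eqref{eq:p2}. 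Your handling of the hypothesis $|\o\mu|<k$ (adding a box to $\mu$ changes $|\o\lambda|$ by at most one, so all $\lambda\leftarrow\mu$ satisfy $|\o\lambda|\le k$) is also fine.
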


\begin{proof}
Since $n \geq 2k+1$ we have from Theorem \ref{th:QPkhalfcentralizer}
$$\mathsf{QP}_{k+\frac{1}{2}}(n) \cong {\mathrm{End}}_{S_{n-1}}(({\mathrm{Res}}^{S_n}_{S_{n-1}}{\mathbb S}^{(n-1,1)}_{S_n})^{\otimes k})$$
and then from Theorem \ref{th:QPkcentralizer}
$$\mathsf{QP}_k(n)\cong{\mathrm{End}}_{S_{n}}\!\left(\left({\mathbb S}^{(n-1,1)}_{S_n}\right)^{\otimes k}\right)~.$$
There is an inclusion of $S_{n-1}$ into $S_n$ and an inclusion of
$\mathsf{QP}_k(n)$ into $\mathsf{QP}_{k+\frac{1}{2}}(n)$.
This establishes a see-saw pair \cite[Theorem 9.2.2]{GW} between
$(S_{n-1}, S_n)$ and $(\mathsf{QP}_k(n), \mathsf{QP}_{k+\frac{1}{2}}(n) )$ which implies that
$${\mathrm{Hom}}\!\left({\mathbb S}^\lambda_{S_{n-1}}
, {\mathrm{Res}}^{S_n}_{S_{n-1}} {\mathbb S}^\mu_{S_{n}}
\right) = {\mathrm{Hom}}\!\left(V^\mu_{\mathsf{QP}_k(n)}, {\mathrm{Res}}^{\mathsf{QP}_{k+\frac{1}{2}}(n)}_{\mathsf{QP}_k(n)} V^\lambda_{\mathsf{QP}_{k+\frac{1}{2}}(n)}\right)~.$$
Now the dimension of the space on the left-hand side is $1$ if $\lambda \rightarrow \mu$
and $0$ otherwise.
\end{proof}

Let $\phi$ be the $S_{n-1}$ character of some $S_{n-1}$ module $V$
and $\psi$ be the $S_n$ character of some $S_n$ module $W$.
In the following proof we will use the notation $\phi \uparrow_{S_{n-1}}^{S_n}$ to
represent the character of the character $\mathrm{Ind}_{S_{n-1}}^{S_n} V$
and $\psi \downarrow^{S_n}_{S_{n-1}}$ to represent the character of the
restriction $\mathrm{Res}^{S_{n}}_{S_{n-1}} W$.  We recall that Frobenius
reciprocity \cite[Theorem 1.12.6]{Sagan} relates these characters through the identity
\[
\langle \psi, \phi\uparrow_{S_{n-1}}^{S_n} \rangle =
\langle \psi \downarrow^{S_n}_{S_{n-1}}, \phi \rangle~.
\]

\begin{theorem}
Let $n \geq 2k+1$, then
for if $\la \vdash n$ such that $|\o\la| < k$, then
\begin{equation} \label{eq:p2}
\dim (V^{\lambda}_{\widetilde{\mathsf{QP}}_{k}(n)})
= \sum_{\mu \rightarrow \lambda} \dim \Big(V^{\mu}_{\mathsf{QP}_{k+\frac{1}{2}}(n)}\Big)~.
\end{equation}
\end{theorem}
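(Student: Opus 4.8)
The plan is to mirror the argument used for Equation \eqref{eq:p1}, but now with the roles of induction and restriction interchanged, exploiting the other half of the see-saw picture described in the introduction. Recall from Theorem \ref{th:QPkhalfcentralizer} that for $n \geq 2k+1$ we have
\[
\mathsf{QP}_{k+\frac{1}{2}}(n) \cong {\mathrm{End}}_{S_{n-1}}\!\left(\left({\mathrm{Res}}^{S_n}_{S_{n-1}}{\mathbb S}^{(n-1,1)}_{S_n}\right)^{\otimes k}\right),
\]
and from Theorem \ref{th:QPktildecentralizer} (noting $n \geq 2k+1$ is not quite enough; one needs $n \geq 2k+2$, so I would either strengthen the hypothesis or observe that the non-zero-dimension range $|\o\lambda| < k$ already forces the relevant multiplicities to stabilize) that
\[
\widetilde{\mathsf{QP}}_{k}(n) \cong {\mathrm{End}}_{S_n}\!\left(\left({\mathbb S}^{(n-1,1)}_{S_n}\right)^{\otimes (k-1)}\otimes V_n\right).
\]
Since $V_n \cong {\mathrm{Ind}}^{S_n}_{S_{n-1}}{\mathbb S}^{(n-1)}_{S_{n-1}}$ and, by the tensor identity, ${\mathbb S}^{(n-1,1)}_{S_n}\otimes {\mathrm{Ind}}^{S_n}_{S_{n-1}}(\,\cdot\,)\cong {\mathrm{Ind}}^{S_n}_{S_{n-1}}\!\left({\mathrm{Res}}^{S_n}_{S_{n-1}}{\mathbb S}^{(n-1,1)}_{S_n}\otimes(\,\cdot\,)\right)$, one gets
\[
\left({\mathbb S}^{(n-1,1)}_{S_n}\right)^{\otimes(k-1)}\otimes V_n \;\cong\; {\mathrm{Ind}}^{S_n}_{S_{n-1}}\!\left({\mathrm{Res}}^{S_n}_{S_{n-1}}\!\left({\mathbb S}^{(n-1,1)}_{S_n}\right)^{\otimes(k-1)}\right),
\]
but it is cleaner simply to use the see-saw pair directly as in the previous theorem.

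First I would set up the see-saw pair: there is an inclusion $S_{n-1}\hookrightarrow S_n$ and an inclusion $\mathsf{QP}_{k+\frac{1}{2}}(n)\hookrightarrow \widetilde{\mathsf{QP}}_{k}(n)$ (the embedding $\o{d}\otimes \mathbf{1}$ from Subsection \ref{subsec:quasitower}, realized on the module side by tensoring with $v_n$ and then including $\mathbb{C}\text{-Span}\{v_n\}\hookrightarrow V_n$). These two inclusions act on the ``same'' space in the see-saw sense, with $\widetilde{\mathsf{QP}}_k(n)$ centralizing the $S_n$-action on $(\mathbb{S}^{(n-1,1)}_{S_n})^{\otimes(k-1)}\otimes V_n$ and $\mathsf{QP}_{k+\frac{1}{2}}(n)$ centralizing the $S_{n-1}$-action on the restriction of that space (which, after collapsing the $V_n$ tensor factor via $V_n\downarrow_{S_{n-1}}\cong V_{n-1}$, is exactly $({\mathrm{Res}}^{S_n}_{S_{n-1}}\mathbb{S}^{(n-1,1)}_{S_n})^{\otimes k}$ up to the stabilization range). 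Then by the see-saw/Howe-duality statement \cite[Theorem 9.2.2]{GW} one obtains the equality of Hom-spaces
\[
{\mathrm{Hom}}_{S_n}\!\left({\mathbb S}^\lambda_{S_n},\,{\mathrm{Ind}}^{S_n}_{S_{n-1}}{\mathbb S}^\mu_{S_{n-1}}\right)
\;\cong\;
{\mathrm{Hom}}_{\mathsf{QP}_{k+\frac{1}{2}}(n)}\!\left(V^\mu_{\mathsf{QP}_{k+\frac{1}{2}}(n)},\,{\mathrm{Res}}^{\widetilde{\mathsf{QP}}_{k}(n)}_{\mathsf{QP}_{k+\frac{1}{2}}(n)}V^\lambda_{\widetilde{\mathsf{QP}}_{k}(n)}\right).
\]
By Frobenius reciprocity the left-hand side has dimension $\langle {\mathbb S}^\lambda\downarrow^{S_n}_{S_{n-1}},{\mathbb S}^\mu\rangle$, which is $1$ if $\mu\rightarrow\lambda$ and $0$ otherwise (the branching rule for the symmetric group). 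Taking dimensions of both sides and summing the multiplicity decomposition of ${\mathrm{Res}}\,V^\lambda_{\widetilde{\mathsf{QP}}_k(n)}$ over all $\mu$ yields
\[
\dim\!\left(V^\lambda_{\widetilde{\mathsf{QP}}_{k}(n)}\right)
= \sum_{\mu}\dim {\mathrm{Hom}}_{\mathsf{QP}_{k+\frac12}(n)}\!\left(V^\mu,{\mathrm{Res}}\,V^\lambda\right)\dim\!\left(V^\mu_{\mathsf{QP}_{k+\frac12}(n)}\right)
= \sum_{\mu\rightarrow\lambda}\dim\!\left(V^\mu_{\mathsf{QP}_{k+\frac12}(n)}\right),
\]
which is Equation \eqref{eq:p2}. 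Here the first equality uses semisimplicity of $\mathsf{QP}_{k+\frac12}(n)$ in the assumed range, so that the restricted module decomposes with multiplicities equal to Hom-dimensions.

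The main obstacle I anticipate is bookkeeping the stabilization of parameters: Theorem \ref{th:QPktildecentralizer} as stated requires $n\geq 2k+2$ while the hypothesis here is $n\geq 2k+1$, and the restriction $\widetilde{\mathsf{QP}}_k(n)\to\mathsf{QP}_{k+\frac12}(n)$ involves passing between an $S_n$-centralizer and an $S_{n-1}$-centralizer, so I must verify that the abstract see-saw equality of Hom-spaces actually descends to the diagram algebras (rather than merely to the large centralizers $\mathsf{P}_k$), and that the indexing of simple modules by partitions $\lambda\vdash n$ with $|\o\lambda|<k$ is compatible on both sides. The cleanest fix is to invoke the hypothesis $|\o\lambda|<k$, under which all the relevant Kronecker-type multiplicities are in the stable range and the diagram-algebra simples correspond bijectively to the symmetric-group components appearing in the tensor powers; then the Hom-space identification for the diagram algebras follows from the double centralizer theorem \cite[p.158]{P} applied in each algebra. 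A secondary, purely notational point is to confirm that the branching condition $\mu\rightarrow\lambda$ on the partitions of $n-1$ and $n$ matches the condition $\o\mu = \o\lambda$ or $\o\mu\rightarrow\o\lambda$ stated just before the theorem, which is immediate from removing a box from the first row versus a lower row.
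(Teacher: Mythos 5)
There is a genuine gap at the central step of your argument. The see-saw statement you invoke from \cite[Theorem 9.2.2]{GW} requires the two dual pairs to act on the \emph{same} module, with the smaller group paired against the \emph{full} centralizer of that module. Here $\widetilde{\mathsf{QP}}$ is the $S_n$-centralizer of $W=\left(\mathbb{S}^{(n-1,1)}_{S_n}\right)^{\otimes k}\otimes V_n$, but the $S_{n-1}$-centralizer of $\mathrm{Res}^{S_n}_{S_{n-1}}W$ is strictly larger than $\mathsf{QP}_{k+\frac{1}{2}}(n)$: since $\mathrm{Res}^{S_n}_{S_{n-1}}V_n\cong \mathrm{Res}^{S_n}_{S_{n-1}}\mathbb{S}^{(n-1,1)}_{S_n}\oplus\mathbb{S}^{(n-1)}_{S_{n-1}}$, the restricted module is $\left(\mathrm{Res}\,\mathbb{S}^{(n-1,1)}_{S_n}\right)^{\otimes k}\oplus\left(\mathrm{Res}\,\mathbb{S}^{(n-1,1)}_{S_n}\right)^{\otimes(k-1)}$, not the single tensor power whose centralizer is the half quasi-partition algebra. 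Your step of ``collapsing the $V_n$ factor via $V_n\downarrow\cong V_{n-1}$'' is where this extra summand gets lost, and it is exactly what prevents $(S_{n-1},\mathsf{QP}_{k+\frac{1}{2}}(n))$ from being a double-centralizer pair on $W$. Consequently the asserted Hom-space identity
\[
{\mathrm{Hom}}_{S_n}\!\left(\mathbb{S}^\lambda,\,\mathrm{Ind}^{S_n}_{S_{n-1}}\mathbb{S}^\mu\right)\cong{\mathrm{Hom}}_{\mathsf{QP}_{k+\frac{1}{2}}(n)}\!\left(V^\mu,\,\mathrm{Res}\,V^\lambda_{\widetilde{\mathsf{QP}}}\right)
\]
does not follow from the see-saw theorem as applied; it is essentially the branching statement one is trying to prove. (There is also an internal index slip: you realize $\widetilde{\mathsf{QP}}_k$ on $(k-1)$ tensor factors but pair it with $\mathsf{QP}_{k+\frac{1}{2}}$, the centralizer of $k$ factors, so the two sides of your see-saw are off by one.)

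The route you mention and then set aside is the one that works, and it is the paper's proof: use the tensor identity to get $\mathrm{Ind}^{S_n}_{S_{n-1}}\mathrm{Res}^{S_n}_{S_{n-1}}\left(\mathbb{S}^{(n-1,1)}_{S_n}\right)^{\otimes k}\cong\left(\mathbb{S}^{(n-1,1)}_{S_n}\right)^{\otimes k}\otimes V_n$, identify $\dim V^\lambda_{\widetilde{\mathsf{QP}}}$ with the multiplicity of $\mathbb{S}^\lambda$ in this induced module via the double centralizer theorem \cite{P}, apply Frobenius reciprocity to write $\left<\chi^\lambda,\phi_k\!\uparrow\right>=\sum_{\mu\rightarrow\lambda}\left<\chi^\mu,\phi_k\right>$ where $\phi_k$ is the character of the restricted tensor power, and identify each $\left<\chi^\mu,\phi_k\right>$ with $\dim V^\mu_{\mathsf{QP}_{k+\frac{1}{2}}(n)}$ by Theorem \ref{th:QPkhalfcentralizer}. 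No see-saw (and no analysis of restriction between the diagram algebras) is needed; the two dual pairs live on different spaces related by induction, and Frobenius reciprocity is precisely the tool that bridges them. Your observations about the $n\geq 2k+1$ versus $n\geq 2k+2$ hypothesis and about matching $\mu\rightarrow\lambda$ with the condition on $\overline{\mu},\overline{\lambda}$ are reasonable side points, but they do not repair the main step.
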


\begin{proof}
First, we remark that
\begin{align}
{\mathrm{Ind}}_{S_{n-1}}^{S_n} {\mathrm{Res}}^{S_n}_{S_{n-1}}({\mathbb S}^{(n-1,1)}_{S_n})^{\otimes k}
&\cong {\mathrm{Ind}}_{S_{n-1}}^{S_n}
\!\left( \left({\mathrm{Res}}^{S_n}_{S_{n-1}}({\mathbb S}^{(n-1,1)}_{S_n})^{\otimes k}\right) \!\otimes
{\mathbb S}^{(n-1)}_{S_{n-1}}\right)\nonumber\\
&\cong
({\mathbb S}^{(n-1,1)}_{S_n})^{\otimes k} \otimes
{\mathrm{Ind}}_{S_{n-1}}^{S_n}\!\left({\mathbb S}^{(n-1)}_{S_{n-1}}\right)\label{eq:cong}\\
&\cong (\mathbb{S}^{(n-1,1)}_{S_n})^{\otimes k} \otimes V_n~.\nonumber
\end{align}
The isomorphism on the line labelled by Equation \eqref{eq:cong} follows from the ``tensor identity''
\cite[Equation (3.18)]{HR}.  That is, for an $S_n$ module $M$ and $S_{n-1}$ module $N$, the map
\begin{center}
\begin{tabular}{ccc}
${\mathrm{Ind}}^{S_n}_{S_{n-1}} \!\left( {\mathrm{Res}}^{S_n}_{S_{n-1}}(M) \otimes N \right)$
 & $\longrightarrow$  & $M \otimes {\mathrm{Ind}}^{S_n}_{S_{n-1}} N$
 \\
 $g \otimes_{S_n} ( m \otimes n )$  & $\longmapsto$ & $gm \otimes (g \otimes_{S_n} n)$
\end{tabular}
\end{center}
is an isomorphism of $S_n$ modules.

By the double centralizer theorem \cite[p.158]{P},
the dimension of an irreducible ${\widetilde{\mathsf{QP}}}_{k+1}(n)$ module
indexed by a partition $\lambda \vdash n$ is equal to the multiplicity
of ${\mathbb{S}}_{S_n}^\lambda$ in $(\mathbb{S}^{(n-1,1)}_{S_n})^{\otimes k} \otimes V_n$
which we have now established to be isomorphic to
$${\mathrm{Ind}}_{S_{n-1}}^{S_n} {\mathrm{Res}}^{S_n}_{S_{n-1}}\!\left({\mathbb S}^{(n-1,1)}_{S_n}\right)^{\otimes k}~.$$
Let $\phi_{k}$ represent the $S_{n-1}$ character of
${\mathrm{Res}}^{S_n}_{S_{n-1}}({\mathbb S}^{(n-1,1)}_{S_n})^{\otimes k}
\cong ({\mathrm{Res}}^{S_n}_{S_{n-1}}{\mathbb S}^{(n-1,1)}_{S_n})^{\otimes k}$, then
by Frobenius reciprocity
$${\mathrm{dim}} (V^\lambda_{{\widetilde{\mathsf{QP}}}_{k}(n)})
= \left< \chi^{\lambda}_{S_n}, \phi_{k} \uparrow_{S_{n-1}}^{S_n} \right>
= \left< \chi^{\lambda}_{S_n} \downarrow^{S_n}_{S_{n-1}}, \phi_{k} \right>
= \sum_{\mu \rightarrow \lambda} \left< \chi^{\mu}_{S_{n-1}}, \phi_{k} \right>~.$$
We have previously established by Theorem \ref{th:QPkhalfcentralizer} that
$$\left< \chi^{\mu}_{S_{n-1}}, \phi_{k} \right> =
{\mathrm{dim}} \Big(V^\mu_{\mathsf{QP}_{k+\frac{1}{2}}(n)}\Big)$$
since (again, by the double centralizer theorem \cite{P})
the dimension of an irreducible $\mathsf{QP}_{k+\frac{1}{2}}(n)$ module
indexed by a partition $\mu \vdash n-1$ in a centralizer is equal to the multiplicity
of ${\mathbb{S}}_{S_{n-1}}^\mu$ in ${\mathrm{Res}}^{S_n}_{S_{n-1}}({\mathbb S}^{(n-1,1)}_{S_n})^{\otimes k}$.
\end{proof}

\begin{theorem}
Let $n \geq 2k+2$, then
for $\lambda \vdash n$ such that $|\o\la| < k$,
\begin{equation}\label{eq:p3}
\dim (V^{\lambda}_{\mathsf{QP}_{k}(n)})
= \dim (V^{\lambda}_{\widetilde{\mathsf{QP}}_{k}(n)})
-\dim (V^{\lambda}_{\mathsf{QP}_{k-1}(n)})~.
\end{equation}
\end{theorem}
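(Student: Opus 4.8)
The plan is to identify each dimension appearing in the statement with the multiplicity of an irreducible $S_n$-module in an appropriate tensor space, exactly as was done in the proofs of Equations \eqref{eq:p1} and \eqref{eq:p2}, and then reduce the identity to a statement about symmetric group characters. By the double centralizer theorem together with Theorems \ref{th:QPkcentralizer} and \ref{th:QPktildecentralizer}, for $\lambda \vdash n$ with $|\o\lambda| < k$ (so that the dimensions are nonzero), we have
\[
\dim (V^{\lambda}_{\mathsf{QP}_{k}(n)}) = \big\langle \chi^\lambda_{S_n},\ \chi_k \big\rangle
\qquad\text{and}\qquad
\dim (V^{\lambda}_{\widetilde{\mathsf{QP}}_{k}(n)}) = \big\langle \chi^\lambda_{S_n},\ \chi_{k-1} \otimes \chi^{V_n} \big\rangle,
\]
where $\chi_j$ denotes the $S_n$-character of $(\mathbb{S}^{(n-1,1)}_{S_n})^{\otimes j}$ and $\chi^{V_n}$ is the character of the permutation module $V_n$. (Note the index shift: the index on $\widetilde{\mathsf{QP}}$ and on $\mathsf{QP}_{k-1}$ in \eqref{eq:p3} means the relevant tensor power is $k-1$, matching the index conventions in \eqref{eq:QPtower} and \eqref{eq:p2} once one accounts for the shift between $\widetilde{\mathsf{QP}}_{k+1}$ and $\widetilde{\mathsf{QP}}_k$.)

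The key algebraic input is the decomposition $V_n \cong \mathbb{S}^{(n-1,1)}_{S_n} \oplus \mathbb{S}^{(n)}_{S_n}$ as an $S_n$-module, recalled in Section \ref{sec:quasicentralizers}. Tensoring with $(\mathbb{S}^{(n-1,1)}_{S_n})^{\otimes (k-1)}$ and using that $\mathbb{S}^{(n)}_{S_n}$ is the trivial module, we obtain
\[
(\mathbb{S}^{(n-1,1)}_{S_n})^{\otimes (k-1)} \otimes V_n
\cong (\mathbb{S}^{(n-1,1)}_{S_n})^{\otimes k} \ \oplus\ (\mathbb{S}^{(n-1,1)}_{S_n})^{\otimes (k-1)},
\]
that is, $\chi_{k-1} \otimes \chi^{V_n} = \chi_k + \chi_{k-1}$ at the level of characters. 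Taking the inner product of both sides with $\chi^\lambda_{S_n}$ gives
\[
\big\langle \chi^\lambda_{S_n},\ \chi_{k-1}\otimes \chi^{V_n}\big\rangle
= \big\langle \chi^\lambda_{S_n},\ \chi_k \big\rangle + \big\langle \chi^\lambda_{S_n},\ \chi_{k-1}\big\rangle,
\]
which is precisely $\dim (V^\lambda_{\widetilde{\mathsf{QP}}_k(n)}) = \dim (V^\lambda_{\mathsf{QP}_k(n)}) + \dim (V^\lambda_{\mathsf{QP}_{k-1}(n)})$, i.e. Equation \eqref{eq:p3}. For this to be valid as a statement about $\mathsf{QP}_{k-1}(n)$-modules one needs $n \geq 2(k-1)$, which is implied by the hypothesis $n \geq 2k+2$; this also guarantees the multiplicity interpretations of all three dimensions hold simultaneously.

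The main obstacle is bookkeeping rather than substance: one must be careful that the index on $\widetilde{\mathsf{QP}}_k(n)$ in \eqref{eq:p3} refers to the algebra whose centralizer description (Theorem \ref{th:QPktildecentralizer}, with $k$ replaced by $k-1$) involves $(\mathbb{S}^{(n-1,1)}_{S_n})^{\otimes(k-1)}\otimes V_n$, and that the hypotheses $n\geq 2k+2$ is exactly what licenses invoking Theorems \ref{th:QPkcentralizer} and \ref{th:QPktildecentralizer} for all the terms involved. Once the index conventions are pinned down, the proof is the one-line character computation above; I would also remark that this identity has the combinatorial meaning (reflected in Figure \ref{fig:Bdiagram}) that the edge from a green node to a red node carries a ``correction'' equal to the dimension one step back in the red row, which is why the diagram is only Bratteli-\emph{like}.
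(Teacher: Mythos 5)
Your proof is correct and follows essentially the same route as the paper: identify each dimension via the double centralizer theorem (Theorems \ref{th:QPkcentralizer} and \ref{th:QPktildecentralizer}) with a multiplicity in a tensor space, decompose $V_n \cong \mathbb{S}^{(n-1,1)}_{S_n} \oplus \mathbb{S}^{(n)}_{S_n}$, and compare multiplicities. Your explicit bookkeeping of the index shift (the tilde algebra with subscript $k$ corresponding to $(\mathbb{S}^{(n-1,1)}_{S_n})^{\otimes (k-1)} \otimes V_n$) is, if anything, more careful than the paper's own write-up, which works with tensor power $k$ and records the identity in the equivalent shifted form $\dim (V^{\lambda}_{\widetilde{\mathsf{QP}}_{k}(n)}) = \dim (V^{\lambda}_{\mathsf{QP}_{k}(n)}) + \dim (V^{\lambda}_{\mathsf{QP}_{k+1}(n)})$.
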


\begin{proof}
Since $n \geq 2k+2$, by Theorem \ref{th:QPktildecentralizer},
${\widetilde{\mathsf{QP}}}_{k}(n) \cong
{\mathrm{End}}_{S_n}((\mathbb{S}^{(n-1,1)}_{S_n})^{\otimes k} \otimes V_n)$,
then by the double centralizer theorem \cite{P}, for a partition $\lambda \vdash n$,
${\mathrm{dim}} (V^\lambda_{{\widetilde{\mathsf{QP}}}_{k}(n)})$
is equal to the multiplicity of
${\mathbb{S}}_{S_n}^{\lambda}$
in $(\mathbb{S}^{(n-1,1)}_{S_n})^{\otimes k} \otimes V_n$.
Since as $S_n$ modules, $V_n \cong \mathbb{S}^{(n)}_{S_n} \oplus \mathbb{S}^{(n-1,1)}_{S_n}$,
then
\begin{align*}
\left(\mathbb{S}^{(n-1,1)}_{S_n}\right)^{\otimes k} \otimes V_n &\cong
\left(\mathbb{S}^{(n-1,1)}_{S_n}\right)^{\otimes k} \otimes \left(\mathbb{S}^{(n)}_{S_n} \oplus \mathbb{S}^{(n-1,1)}_{S_n}\right)\\
&\cong \left(\left(\mathbb{S}^{(n-1,1)}_{S_n}\right)^{\otimes k} \otimes \mathbb{S}^{(n)}_{S_n}\right) \oplus
\left(\mathbb{S}^{(n-1,1)}_{S_n}\right)^{\otimes k+1}\\
&\cong \left(\mathbb{S}^{(n-1,1)}_{S_n}\right)^{\otimes k} \oplus \left(\mathbb{S}^{(n-1,1)}_{S_n}\right)^{\otimes k+1}~.
\end{align*}
Since by Theorem \ref{th:QPkcentralizer} and the
double centralizer theorem \cite{P}, ${\mathrm{dim}} (V^\lambda_{{\mathsf{QP}}_{k}(n)})$
is equal to the multiplicity of ${\mathbb{S}}_{S_n}^{\lambda}$ in
$(\mathbb{S}^{(n-1,1)}_{S_n})^{\otimes k}$, it follows that
\begin{equation}\label{eq:simplerec}
{\mathrm{dim}} (V^\lambda_{{\widetilde{\mathsf{QP}}}_{k}(n)}) =
{\mathrm{dim}} (V^\lambda_{{\mathsf{QP}}_{k}(n)}) + {\mathrm{dim}} (V^\lambda_{{\mathsf{QP}}_{k+1}(n)})~.
\end{equation}
\qedhere
\end{proof}

\begin{figure}[h]
\begin{center}
\includegraphics[width=5.25 in]{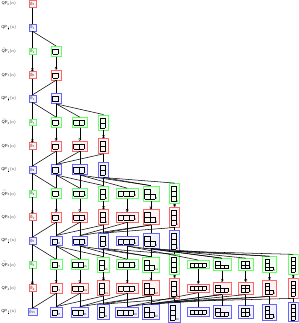}
\end{center}
\caption{A Bratteli diagram showing the relations of the
tower of algebras in Equation \eqref{eq:QPtower}.}\label{fig:Bdiagram}
\end{figure}

\subsection{A combinatorial interpretation for the dimensions of the irreducibles}
As a corollary we can give a combinatorial interpretation for the
dimension of an irreducible representation of each of the algebras.
The dimensions are expressed in terms of set valued tableaux
from Definition \ref{def:settableaux}.

\begin{cor}\label{cor:combdims}
Combinatorial interpretation for dimensions of irreducible representations.
\begin{enumerate}
\item Let $k > 0$ and $n \geq 2k+1$ and $\lambda \vdash n$,
then ${\mathrm{dim}} (V^\lambda_{{\mathsf{QP}}_{k}(n)})$
is equal to the number of $[k]$-set valued tableaux of shape $\lambda$
where the non-empty sets in the first row must be larger than $1$.
\vskip .2in

\item Let $k > 0$ and $n \geq 2k+1$ and $\mu \vdash n-1$,
then ${\mathrm{dim}} (V^\mu_{{\mathsf{QP}}_{k+\frac{1}{2}}(n)})$
is equal to the number of pairs consisting of a cell containing
a subset $S \cup \{k+1\}$ with $S \subseteq [k]$
and a $([k] - S)$-set valued tableau of shape $\mu$
where the non-empty sets in the first row must be larger than $1$.
\vskip .2in

\item Let $k > 0$ and $n \geq 2k+2$ and $\lambda \vdash n$,
${\mathrm{dim}} (V^\lambda_{{\widetilde{\mathsf{QP}}}_{k+1}(n)})$
is equal to the number of $[k+1]$-set valued tableaux of shape $\lambda$
such that if $|S_{(1,j)}|=1$, then $j=\lambda_1$ and
$S_{(1,\lambda_1)} = \{ k+1 \}$.
\end{enumerate}
\end{cor}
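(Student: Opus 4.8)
The plan is to derive all three combinatorial descriptions from the recursive dimension formulas in Equations \eqref{eq:p1}, \eqref{eq:p2}, \eqref{eq:p3} together with the tableau model for the standard modules of $\mathsf{P}_k(n)$ developed in Section~\ref{sec:repsPk}. The starting point is the observation that the index set for the irreducibles of $\mathsf{QP}_k(n)$ consists of partitions $\lambda \vdash n$ with $|\o\lambda| \le k$, and by Theorem~\ref{th:QPkcentralizer} together with the double centralizer theorem, $\dim(V^\lambda_{\mathsf{QP}_k(n)})$ is the multiplicity of $\mathbb{S}^\lambda_{S_n}$ in $(\mathbb{S}^{(n-1,1)}_{S_n})^{\otimes k}$. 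The cleanest route for part~(1) is to connect this to the known set-valued-tableau count. Recall that $\dim(\Delta_k(\nu)) = \sum_{i=m}^k \stirling{k}{i}\binom{i}{m} f^\nu$ is the number of $[k]$-set valued tableaux of shape $(n-|\nu|,\nu)$ (see the discussion after Example~\ref{ex:settableau}), and the dimension of the irreducible $V^\lambda_{\mathsf{P}_k(n)}$ equals the multiplicity of $\mathbb{S}^\lambda_{S_n}$ in $V_n^{\otimes k}$. First I would record the identity $\dim(V^\lambda_{\mathsf{QP}_k(n)}) = $ (number of $[k]$-set valued tableaux of shape $\lambda$ with no singleton blocks in the first row); this is exactly what Proposition~\ref{prop:simpleQP} and the preceding proposition say about the size of $\mathcal{Q}_k(\nu)$, once one identifies $\o\lambda = \nu$ and notes that the modules $\mathsf{QP}_k^\nu$ are the simple modules indexed by $\lambda = (n-|\nu|,\nu)$. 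So part~(1) is essentially a restatement of that counting result, requiring only the remark that the bijection $\rho$ carries the leading terms of $\o{d\otimes T}$ onto precisely the set-valued tableaux whose first-row blocks all have size $\ge 2$.

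For part~(3), I would use Equation~\eqref{eq:simplerec}, namely $\dim(V^\lambda_{\widetilde{\mathsf{QP}}_{k+1}(n)}) = \dim(V^\lambda_{\mathsf{QP}_{k+1}(n)}) + \dim(V^\lambda_{\mathsf{QP}_k(n)})$. By part~(1), the right-hand side counts $[k+1]$-set valued tableaux of shape $\lambda$ whose first-row blocks all have size $\ge 2$, plus $[k]$-set valued tableaux of shape $\lambda$ whose first-row blocks all have size $\ge 2$. The bijection to establish is: a $[k+1]$-set valued tableau of shape $\lambda$ in which every first-row singleton block (if any) equals $\{k+1\}$ and sits in cell $(1,\lambda_1)$ splits into two disjoint cases — either the element $k+1$ lies in a block of size $\ge 2$ (possibly in the first row, but then that block has size $\ge 2$), giving a $[k+1]$-set valued tableau with all first-row blocks of size $\ge 2$; or $k+1$ forms its own block, necessarily placed at $(1,\lambda_1)$, and deleting that block leaves a $[k]$-set valued tableau with all first-row blocks of size $\ge 2$. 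I would check that this correspondence is a bijection and that the placement constraint ($j = \lambda_1$, $S_{(1,\lambda_1)} = \{k+1\}$) is forced by the row-filling conventions in Definition~\ref{def:settableaux}. This is routine bookkeeping once the case split is set up.

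Part~(2) is the most delicate and I expect it to be the main obstacle. Here one uses the corollary noted in the text that $\mathsf{QP}_{k+\frac{1}{2}}(n) \cong \mathsf{P}_k(n-1)$, or equivalently Theorem~\ref{th:QPkhalfcentralizer} with Equation~\eqref{eq:p1}. The cleanest approach is via Equation~\eqref{eq:p1}: $\dim(V^\mu_{\mathsf{QP}_{k+\frac{1}{2}}(n)}) = \sum_{\lambda \leftarrow \mu} \dim(V^\lambda_{\mathsf{QP}_k(n)})$, where the sum is over $\lambda \vdash n$ containing $\mu$. By part~(1), each term counts $[k]$-set valued tableaux of shape $\lambda$ with first-row blocks of size $\ge 2$; I would build a bijection between the disjoint union over $\lambda \leftarrow \mu$ of these tableaux and the set of pairs (a cell containing $S \cup \{k+1\}$ with $S \subseteq [k]$, a $([k]-S)$-set valued tableau of shape $\mu$ with first-row blocks of size $\ge 2$). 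The idea is that $\lambda/\mu$ is a single box, and the content of that box together with a choice of which elements of $[k]$ accompany $k+1$ records the difference; the remaining blocks on $[k]\setminus S$ fill a shape-$\mu$ tableau. The subtlety is handling the case where the extra box of $\lambda/\mu$ sits in the first row versus in a lower row: if it sits in a lower row it must carry a block (of size $\ge 1$ from $[k]$), while if it sits in the first row the "box containing $S\cup\{k+1\}$" is precisely that first-row cell — and here one must verify the size constraints are compatible and no double counting occurs. I would set up the bijection explicitly box by box, paying close attention to the first-row singleton exclusion inherited from part~(1), and verify injectivity and surjectivity. An alternative cross-check, which I would mention, is to invoke $\mathsf{QP}_{k+\frac{1}{2}}(n) \cong \mathsf{P}_k(n-1)$ directly and match pairs of set-valued tableaux of shape $((n-1-|\nu|,\nu),(1))$ — the indexing set for half-partition algebra irreducibles described after Example~\ref{ex:settableau} — against the pairs in the statement, reconciling the "$(1)$-component box" with the cell containing $S \cup \{k+1\}$.
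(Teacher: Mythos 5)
Your overall strategy (part (1) read off from the count of $\mathcal{Q}_k(\nu)$ in Section 5, then parts (2) and (3) from the recursions \eqref{eq:p1} and \eqref{eq:simplerec}) is the same induction the paper itself invokes, and parts (1) and (3) essentially go through: for (1) the only thing to add is the (standard, but worth one sentence) identification of the diagrammatically constructed simple $\mathsf{QP}_k^{\overline{\lambda}}$ with the centralizer label $\lambda=(n-|\overline{\lambda}|,\overline{\lambda})$, which you do flag. In (3), however, your dichotomy is mis-stated: if $k+1$ forms its own block it is \emph{not} forced into the first row --- a singleton $\{k+1\}$ may legitimately occupy a cell of $\overline{\lambda}$, and such tableaux have no first-row singleton at all, so they belong to the summand counted by $\dim V^\lambda_{\mathsf{QP}_{k+1}(n)}$. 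The correct split is ``no first-row singleton'' versus ``the first-row singleton $\{k+1\}$ is present at the end of row 1,'' and deleting or appending that block is the bijection; this is a small, fixable slip.

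The genuine gap is in part (2). The map you propose --- take $S$ to be the content of the box $\lambda/\mu$ and the rest as the shape-$\mu$ tableau --- is not a bijection, and the failure is not the first-row/lower-row distinction you anticipate. Already for $k=2$ and $\mu=(n-2,1)$ it breaks: the admissible tableau for $\lambda=(n-2,2)$ (row 2 filled by $\{1\},\{2\}$ in that order) and the one for $\lambda=(n-2,1,1)$ (column filled by $\{1\}$, then $\{2\}$ above) both map to the pair with $S=\{2\}$ and remaining tableau having $\{1\}$ in row 2, while the legitimate pair with $S=\{1\}$ and $\{2\}$ in row 2 is never produced, because re-inserting $\{1\}$ to the right of or above $\{2\}$ violates the increasing-maxima standardness built into the tableaux through $\rho$. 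So the fibers of your map have sizes $2$ and $0$ where a bijection needs $1$ and $1$; the two sides of \eqref{eq:p1} agree only in aggregate. A correct bijection must re-standardize, i.e.\ remove or insert the block carrying the largest label by a Schensted-type bumping --- exactly the ``row insertion of a cell with the set containing the highest label'' that the paper's one-line proof alludes to --- and that idea is missing from your plan. Alternatively, what you offer only as a cross-check is in fact the cleaner main argument: repeat the Section 5 construction for the half algebra, projecting $\Delta_{k+\frac{1}{2}}(\nu)$ by $\pi^{\otimes k}_{k+1}$ and counting surviving leading terms; these are precisely the pairs in the statement (the single-cell component holding the block $B_{\overline{k+1}}$, the shape-$\mu$ component having no first-row singletons), which proves (2) without any term-by-term bijection across \eqref{eq:p1}.
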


We will not include a detailed proof of this corollary here since
it follows by induction and using Equations
\eqref{eq:p1}, \eqref{eq:p2} and \eqref{eq:p3} and either adding a cell or
Schensted row insertion of a cell with a set consisting of the values
that are with the highest label.

\begin{example}\squaresize=18pt
Below we have displayed three set tableaux of $[9]$ and the shape of the tableaux
are $(m,2,1)$ for an appropriate integer $m$.  The first and third satisfy
Corollary \ref{cor:combdims} condition (1) for $k=9$.
The second and third satisfy Corollary \ref{cor:combdims} condition (2) for $k=8$
if the cell containing $9$ is considered separated from the rest of the tableau.
All three tableaux satisfy Corollary \ref{cor:combdims} condition (3) for $k=8$.
\begin{center}
~\young{59\cr1&6\cr&&&\cdots&37&28\cr}\hskip .3in
\young{34\cr2&578\cr&&&\cdots&16&9\cr}\hskip .3in
\young{148\cr35&7\cr&&&\cdots&&269\cr}
\end{center}
\end{example}

Using standard counting techniques, we can enumerate the objects in Corollary \ref{cor:combdims}.
For example, the dimensions in part $(1)$ of
Corollary \ref{cor:combdims} are equal to:
\begin{align}
\text{dim}(V_{\mathsf{QP}_k(n)}^\lambda) &= 
f^{\bar{\lambda}} 
\sum_{t\geq0}
\binom{k}{t}
\stirling{k-t}{|\bar{\lambda}|}
B_2(t)\label{eq:fulldim1}\\
&=\sum_{s \geq 0} (-1)^s \binom{k}{s}
\text{dim}\Big(V_{\mathsf{QP}_{k-s+\frac{1}{2}}(n)}^\mu\Big)\label{eq:mobiusformula}
\end{align}
where $B_2(t)$ is the number of set partitions of $t$ into blocks of size $\geq 2$.

Equation \eqref{eq:mobiusformula}
follows from M\"obius inversion
because when we count 
the dimensions in Corollary \ref{cor:combdims} part $(2)$
we deduce
\begin{align}
{\mathrm{dim}} \Big(V^\mu_{{\mathsf{QP}}_{k+\frac{1}{2}}(n)}\Big)
&=f^{\bar{\mu}}
\sum_{s \geq 0} \binom{k}{s}
\sum_{t\geq0}
\binom{k-s}{t}
\stirling{k-s-t}{|\bar{\mu}|}
B_2(t)\label{eq:halfdim1}\\
&=\sum_{s \geq 0} \binom{k}{s}
\text{dim}(V_{\mathsf{QP}_{k-s}(n)}^\mu)~.\label{eq:halfdim2}
\end{align}
We note that Equations \eqref{eq:fulldim1} and
\eqref{eq:mobiusformula} are equal to
the dimensions stated in Corollary
\ref{cor:dimDeltaknu} which is of a different form.

We do not include a corresponding equation for the dimension of
${\mathrm{dim}} (V^\lambda_{{\widetilde{\mathsf{QP}}}_{k}(n)})$
because counting the tableaux from Corollary \ref{cor:combdims} part $(3)$
yields a formula that also follows from
Equation \eqref{eq:simplerec}.

The formula for the dimension of the irreducible $\mathsf{QP}_k(n)$-modules
stated in Equation \eqref{eq:fulldim1}
seems significantly simpler than the formula in \cite[Proposition 2]{CG},
\cite[Corollary 3.4]{Scrimshaw}
and \cite[Theorem 4.6]{DO}
which counts ``Kronecker tableaux,'' the name that they give to paths
in the Bratteli diagram of the tower of quasi-partition algebras.

\end{document}